\xpatchcmd{\tkzTabLine}{$0$}{$\bullet$}{}{}
\tikzset{t style/.style={style=solid}}
\numberwithin{equation}{section}       
\theoremstyle{plain} 
\newcommand{\thistheoremname}{}
\newtheorem*{genericthm*}{\thistheoremname}
\newenvironment{namedtheorem*}[1]
  {\renewcommand{\thistheoremname}{#1}%
   \begin{genericthm*}}
  {\end{genericthm*}}
\theoremstyle{plain}
\newtheorem{theo}{Theorem}
\newtheorem{prop}{Proposition}[section]
\newtheorem{lemm}[prop]{Lemma}
\theoremstyle{definition}
\newtheorem{defi}[prop]{Definition}
\theoremstyle{remark}
\newtheorem{rema}[prop]{Remark}
\newtheorem{exam}[prop]{Example}
\newtheoremstyle{citing}
  {3pt}
  {3pt}
  {\itshape}
  {}
  {\bfseries}
  {.}
  {.5em}
  {\thmnote{#3}}
\theoremstyle{citing}
\newcommand{\C}{\mathbb{C}}
\newcommand{\D}{\mathbb{D}}
\newcommand{\F}{\mathbb{F}}
\newcommand{\N}{\mathbb{N}}
\newcommand{\Q}{\mathbb{Q}}
\newcommand{\R}{\mathbb{R}}
\newcommand{\Z}{\mathbb{Z}}
\newcommand{\cA}{\mathcal{A}}
\newcommand{\cB}{\mathcal{B}}
\newcommand{\cD}{\mathcal{D}}
\newcommand{\cF}{\mathcal{F}}
\newcommand{\cI}{\mathcal{I}}
\newcommand{\cK}{\mathcal{K}}
\newcommand{\cL}{\mathcal{L}}
\newcommand{\cO}{\mathcal{O}}
\newcommand{\cP}{\mathcal{P}}
\newcommand{\cR}{\mathcal{R}}
\newcommand{\cU}{\mathcal{U}}
\newcommand{\teta}{\widetilde{\teta}}
\newcommand{\e}{\varepsilon}
\newcommand{\ov}{\overline}
\newcommand{\ovra}{\overrightarrow}
\renewcommand{\=}{ : = }
\renewcommand{\bf}{\mathbf}
\DeclareMathOperator{\diam}{diam}
\DeclareMathOperator{\dist}{dist}
\DeclareMathOperator{\interior}{int}
\DeclareMathOperator{\Crit}{Crit} 
\DeclareMathOperator{\sg}{sgn}
\DeclareMathOperator{\Trn}{Trn}
\DeclareMathOperator{\diff}{Diff}
\DeclareMathOperator{\Trace}{Tr}
\newcommand{\CC}{\overline{\C}}
\newcommand{\SFib}{\Sigma_{\text{Fib}}}
\tikzset{
  declare function={
    sgn(\x) = (and(\x<0, 1) * -1) +
    (and(\x>0, 1) * 1) +
    (and(\x==0, 1) * 0);
  }
}
\begin{document}


\usetikzlibrary{shapes, arrows, calc, arrows.meta, fit, positioning, quotes} 
\tikzset{  
    state/.style ={ellipse, draw, minimum width = 0.9 cm}, 
    point/.style = {circle, draw, inner sep=0.18cm, fill, node contents={}},  
    bidirected/.style={Latex-Latex,dashed}, 
    el/.style = {inner sep=2.5pt, align=right, sloped}  
}  

\colorlet{ColorGray}{gray!10}

\title{The Artin-Mazur zeta function for interval maps}
\author{Jorge Olivares-Vinales} \thanks{} 
\address{Shanghai Center for Mathematical Sciences, Jiangwan Campus, Fudan University, No 2005 Songhu Road, Shanghai, China 200438 }
\email{jolivaresv@fudan.edu.cn}

\begin{abstract}
  In this work we study the Artin-Mazur zeta function for piecewise monotone
functions acting on a compact interval of real numbers. In the case of 
unimodal maps, Milnor and Thurston \cite{MiTh88} gave a characterization
for the rationality of the Artin-Mazur zeta function in terms of the 
orbit of the unique turning point.  We show that for multimodal maps, the 
previous characterization does not hold.
\end{abstract}

\maketitle
\section{Introduction}
\label{Section_Introduction}

Given a continuous self map $f$ from a compact topological space $X$, a 
classical problem in dynamical systems is to count the number of periodic
orbits $\{ x, f(x), \ldots , f^{n-1}(x) \}$ with $x \in X$, $f^n(x) = x$,
and $n \in \Z_+$. An important approach to this problem was proposed by 
Artin and Mazur \cite{Artin-Mazur1965} based on an analogy with the Weil 
zeta function of an algebraic variety over a finite field (see 
\cite{Weil1949}). 
They defined a function of a single complex variable in the following way:  
Suppose that for each integer $n \geq 1 $ we have that $f^n$ has only 
finitely many fixed points. Then, the \emph{Artin-Mazur zeta function} of
$f$ is the formal power series 
\[ \zeta_f(t) \= \exp \left( \sum_{n \geq 1} \frac{1}{n}N_n(f)t^n \right), \]
where $N_n(f)$ is the number of isolated fixed points of $f^n$. 
If $X$ is a smooth compact manifold (without boundary), then Artin and 
Mazur showed that for a dense subset of 
$f \in \diff^{\infty}(X)$ the power series $\zeta_f(t)$ has a positive 
radius of convergence. In this context, Smale 
\cite[Problem 4.5]{Smale1967_diff_dyn_systems} raised the
question if the Artin-Mazur zeta fuction is \emph{generically} rational. 
Meyer \cite{Meyer1967_Periodic_points_of_diffeomorphisms}, 
Bowen and Lanford \cite{Bowen_Lanford_Zeta_function_of_subshift}, and 
Bowen \cite{Bowen1970_Topological_entropy_Axiom_A},  proved that for 
Axiom A diffeomorphisms the A-M zeta function has a positive radius of 
convergence and (restricted to some basic set) it is rational. Williams 
\cite{Williams168_The_zeta_function_of_an_attractor}
proved that the A-M zeta function for Anosov diffeomorphisms is 
rational. Finally,
Guckenheimer \cite{Guckenheimer1970_Zeta_function} and 
Manning \cite{Manning1971_Axiom_A_diffeos_have_rational_zeta_functions}
gave a positive answer to Smale Conjecture. See also 
\cite{Franks1978_A_reduce_zeta_function_for_diffeomorphisms} for an 
alternative proof using a reduced zeta function and 
\cite{Fried_Rationality_for_isolated_expansive_sets} for a proof that 
makes no use of the symbolic dynamics arising from Markov partitions, and 
thus it applies in a more general context.

Recently Berger 
\cite{Berger2021_Generic_family_displayin_robustly_a_fast_growth_of_per_point} 
proved that, in a measure-theoretic sense (Arnold's typicality), almost every 
diffeomorphism has a divergent Artin–Mazur zeta function. See also,
\cite{Kaloshin-Yu2000} for the proof for prevalent families, 
\cite{Kaloshin_Hunt2007_Stretched_exponential_estimates_on_growth_of_the_number_of_per} for the residual case.



In the context of piecewise monotone maps of the interval, Milnor and 
Thurston compute the Artin-Mazur zeta function in terms of the 
combinatorial information of the forward orbit of the turning points of
the map encoded in a power series called the 
\emph{kneading determinant} (see Section \ref{subsec_multimodal_Kneading_thoery}). 
When $f$ is a unimodal map, the 
kneading determinant depends on the forward orbit of the unique turning 
point (the kneading invariant of the map).
Using this, they made the following statement 
(see \cite[Section 8]{MiTh88}) 

\vspace{0.5cm}

\emph{"If $f\colon I \longrightarrow I$ is a
$C^3-$unimodal map with negative Schwarzian derivative such that 
$f(\partial I) \subset \partial I$ and the fixed boundary point is 
unstable. Then there exists an attracting or one-sided attracting 
periodic orbit if and only if the kneading determinant of $f$ is 
periodic."} 

\vspace{0.5cm}

In particular, the above statement implies the following characterization
for quadratic polynomials acting on $\R$ (See Theorem \ref{theo:Milnor_Thurston_kneading_determinant_zeta_function_theorem} and 
\cite[Theorem 9.1]{MiTh88})

\begin{theo}
    For the quadratic map $P_c(x) = x^2 + c$, with $c \in \R$, the 
    following are equivalent:
    \begin{enumerate}
        \item[(1)] The Artin-Mazur zeta function $\zeta_{P_c}(t)$ is 
        rational;
        \item[(2)] the map $P_c$ is hyperbolic or post-critically finite.
    \end{enumerate}
\end{theo}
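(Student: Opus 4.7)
The plan is to obtain the theorem as a direct specialization of the cited Milnor--Thurston characterization, after handling the parameter ranges lying outside the unimodal setting and translating the hypothesis ``existence of an attracting or one-sided attracting periodic orbit'' into the complex-dynamical language of (2). First, I would fix the interval on which the kneading machinery applies: for $c > 1/4$ the map $P_c$ has no real fixed points, so $N_n(P_c) = 0$ for every $n\geq 1$, $\zeta_{P_c}(t)\equiv 1$ is rational, and $P_c$ is hyperbolic (the critical orbit escapes to $\infty$, a superattracting fixed point in $\CC$); both sides of the equivalence then hold trivially. For $c\leq 1/4$ the positive fixed point $\beta_c = (1+\sqrt{1-4c})/2$ is real and, for $c<1/4$, repelling; the interval $I_c = [-\beta_c,\beta_c]$ is forward invariant, $P_c(\partial I_c)\subset\partial I_c$, and $P_c|_{I_c}$ is $C^\infty$-unimodal with negative Schwarzian derivative, so the hypotheses of the quoted Milnor--Thurston statement are met. (For $c<-2$ the dynamically relevant real invariant set is a Cantor subset of $I_c$, but since $N_n(P_c)$ is insensitive to the ambient interval, the same argument still applies.)

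Next, I would invoke the Milnor--Thurston kneading formula: $\zeta_{P_c}(t)$ equals an explicit rational expression in the kneading determinant $D(t)\in\Z[[t]]$, so rationality of $\zeta_{P_c}$ is equivalent to rationality of $D(t)$, which for a unimodal map is in turn equivalent to the kneading invariant being eventually periodic. Combined with the quoted statement, this yields that (1) holds if and only if $P_c|_{I_c}$ admits an attracting or one-sided attracting periodic orbit.

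Finally, I translate this dynamical condition into (2). By Singer's theorem for $C^3$ unimodal maps with negative Schwarzian derivative, any attracting periodic orbit must attract the unique critical point $0$, so the existence of an attracting periodic orbit is equivalent to $P_c$ being hyperbolic in the complex-dynamical sense. The one-sided attracting orbits correspond to the parabolic parameters and to those parameters at which the critical orbit is strictly preperiodic to a repelling cycle --- the Misiurewicz parameters --- which together with the superattracting cases already contained in the hyperbolic class form precisely the PCF real quadratics. The main obstacle, in my expectation, is this final case analysis: one must carefully identify each parabolic parameter (such as $c=1/4$ or $c=-3/4$) as a limit of hyperbolic windows in the real parameter space and verify that it sits in the declared dichotomy, so that the union ``hyperbolic $\cup$ PCF'' matches ``attracting $\cup$ one-sided attracting'' on the nose.
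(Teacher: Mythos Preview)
Your overall plan---reduce to the invariant interval, use the Milnor--Thurston formula to pass from $\zeta_{P_c}$ to the kneading determinant $D(t)$, characterize rationality of $D(t)$ combinatorially, then translate back---is the same scaffold the paper uses. However, there is a real gap in the final translation step, and it is precisely at the Misiurewicz parameters you attempt to absorb into the ``one-sided attracting'' clause.

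If the critical orbit is strictly preperiodic to a \emph{repelling} cycle (say $c=-2$, where $0\mapsto -2\mapsto 2\mapsto 2$ and $P_{-2}'(2)=4$), that cycle is not one-sided attracting in any sense: nearby points on both sides are pushed away. So the quoted Milnor--Thurston dichotomy ``attracting or one-sided attracting $\iff$ kneading determinant periodic'' does not cover these maps, and your sentence ``one-sided attracting orbits correspond to \ldots the Misiurewicz parameters'' is false. Correspondingly, the kneading invariant at a Misiurewicz parameter is \emph{eventually} periodic but not periodic, so the quoted statement (periodic) and what you need (eventually periodic, i.e.\ rational $D(t)$) are not the same condition; the step ``Combined with the quoted statement, this yields \ldots'' conflates the two.

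The paper avoids this by working with the weaker condition directly: its Theorem~\ref{theo:Theorem_1} (proved via Proposition~\ref{prop:Proposition_1} and the transcendence criterion Proposition~\ref{coro:transcendence_criteria}) shows that $\zeta_f$ is rational iff the turning point is \emph{asymptotic to} a periodic orbit. This notion trivially includes the preperiodic case (the orbit eventually equals the periodic point), so Misiurewicz parameters fall under ``PCF'' with no reference to any attracting cycle. The dictionary you want is then: ``asymptotic to a periodic cycle'' splits as (i) the orbit is eventually constant on the cycle, i.e.\ PCF, or (ii) the orbit genuinely converges, which by Singer forces a (one- or two-sided) attracting cycle containing the critical point in its basin---the hyperbolic/parabolic case. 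Your instinct that the parabolic parameters are the delicate borderline is correct; the Misiurewicz parameters, by contrast, are handled cleanly once you use eventual periodicity rather than periodicity.
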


Flatto and Lagarias \cite{Flatto_Lagarias2000_The_lap-counting_and_zeta_function_of_the_tent_map} studied the Artin-Mazur zeta
function
for the tent family (symmetric piecewise linear unimodal maps) using the 
lap-counting 
function (a generating function for the lap number of the map) associated
to a linear mod $1$ transformation. In particular they obtained that the 
Artin-Mazur zeta function of a tent map is rational if and only if its 
turning point is eventually periodic.

In this paper we prove that  the 
characterization above does not hold for maps with more than one turning 
point (multimodal case). We present families of polynomial maps where the
Artin-Mazur zeta function is the same rational function for every element.
These families contain maps that 
are hyperbolic, post-critically finite, and non-hyperbolic maps 
where all but one of the critical points have a cantor set as 
$\omega-$limit.



\subsection{Statement of the results}
We recall some definitions and set notation before stating the results.
Let $I \subset \R$ be a compact interval with non empty interior. 
A continuous map 
$f \colon I \longrightarrow I$ is \emph{piecewise monotone} if the interval
$I$ can be subdivided into finitely many (maximal) subintervals on which $f$
is alternately strictly increasing or strictly decreasing. If there are only
two such subintervals, we say that $f$ is a \emph{unimodal map} or just 
\emph{unimodal}. The boundary points of these monotonicity intervals that
belong to the interior of $I$ are called the \emph{turning points} of $f$.

A point $x \in I$
is called $\emph{periodic}$ if there exits an integer $n \geq 1$ such that 
$f^n(x) = x$. The smallest such $n$ is called the $\emph{minimal period}$ of
$x$, and the finite set \[ \cO_f(x) \= \{x, f(x), \ldots f^{n-1}(x) \}, \] is 
called the \emph{periodic cycle} or \emph{periodic orbit} of $x$. The 
period of $\cO_{f}(x)$ is $n$. 
The periodic point $x$ (or the cycle
$\cO_f(x)$) is called \emph{repelling} provided there is an open set $U$
containing $x$ such that, for every $y \in U \setminus \{x \}$, there is 
a $k(y) \in \N$ such that $f^{nk(y)}(y) \notin U$. 
We say that a point $y \in I$
is \emph{asymptotic} to the periodic point $x$ (or the cycle $\cO_f(x)$) if 
there exists $m(y) \in \N$ such that 
$\lim_{k \to \infty} f^{kn + m(y)}(y) = x$.

Fix $d \geq 3$.
Let $1 \leq \nu \leq d-1$, and 
$\boldsymbol{\mu} = (\mu_1,\mu_2, \ldots, \mu_{\nu}) \in \Z_+^{\nu}$, with 
$\sum_{i = 1}^{\nu}\mu_i = d-1$. We denote by $\cP_d^{\boldsymbol{\mu}}(\R)$
the set of real polynomials of degree $d$ with $\nu$ distinct real critical
points $c_1, c_2, \ldots, c_{\nu}$ and corresponding order 
$\mu_1, \mu_2, \ldots, \mu_{\nu}$. It is known that 
$\cP_d^{\boldsymbol{\mu}}(\R)$ is a smooth real manifold of dimension $\nu$
and the critical values form a full set of coordinates. See for example
\cite{Milnor_Tresser_Entropy_monotonicity_for_real_cubic, Levin_Shen_van_Strien_Transversality_elementary_proof}.

\begin{namedtheorem*}{Main Theorem}
    \label{theo:AM_zeta_function_for_polynomials}
    For every $d \geq 3$, every $2 \leq \nu \leq d-1$, and every 
    $\boldsymbol{\mu} = (\mu_1,\mu_2, \ldots, \mu_{\nu}) \in \Z_+^{\nu}$ with
    $\sum_{i = 1}^{\nu}\mu_i = d-1$, there is a family 
    $\cU(\nu, \boldsymbol{\mu})$ of codimension one in 
    $\cP_d^{\boldsymbol{\mu}}(\R)$ such that every map in this family has 
    the same rational Artin-Mazur zeta function. Moreover, all but one of
    the critical orbits undergoes independent bifurcations. 
\end{namedtheorem*}
We are also able to say a bit more about these families. Recall that two 
piecewise monotone maps $f$ and $\hat{f}$ with $\ell \geq 1$ turning points
$c_1 < c_2 < \ldots < c_{\ell}$ and 
$\hat{c}_1 < \hat{c}_2 < \ldots < \hat{c}_{\ell}$ respectively are called
\emph{combinatorially equivalent} or that they are of the 
\emph{same combinatorial type} if there exists an order preserving 
bijection \[ h \colon \bigcup_{i=i}^{\ell} \cO_{f}(c_i) \longrightarrow 
\bigcup_{i=1}^{\ell} \cO_{\hat{f}}(\hat{c}_i),\] such that 
$h \circ f = \hat{f} \circ h$, and for $i = 1,2, \ldots, \ell$ 
we have that $h(c_i) = \hat{c_i}$. 

\begin{theo}
    \label{theo:combinatorial_type_theorem}
    The family $\cU(\nu, \boldsymbol{\mu})$ contains the following type 
    of maps:
    \begin{enumerate}
        \item[(i)] Hyperolic maps maps of infinitely many different
        combinatorial type;
        \item[(ii)] Post-critically finite maps of infinitely many different 
        combinatorial type;
        \item [(iii)] Non-hyperbolic maps that are not post-critically finite
        of infinitely many different combinatorial type.
    \end{enumerate} 
    In particular, the family $\cU(\nu, \boldsymbol{\mu})$ contains 
    infinitely many different conjugacy classes of polynomial maps.
\end{theo}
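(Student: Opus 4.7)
\smallskip

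\noindent\textbf{Proof plan.}
The \emph{Main Theorem} gives a codimension-one family $\cU(\nu,\boldsymbol{\mu})\subset\cP_d^{\boldsymbol{\mu}}(\R)$ of real dimension $\nu-1$, and the last sentence of its statement asserts that $\nu-1$ of the $\nu$ critical orbits undergo \emph{independent bifurcations} inside $\cU(\nu,\boldsymbol{\mu})$. The strategy is to use precisely this independence: I fix the ``constrained'' critical point (say $c_1$), whose orbit is determined by the rationality condition on the zeta function, and then parametrize $\cU(\nu,\boldsymbol{\mu})$ (locally) by the critical values $f(c_2),\ldots,f(c_\nu)$ of the remaining critical points. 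Because these $\nu-1$ critical values can be prescribed independently and because $\nu-1\geq 1$, the restriction of the dynamics to $\cU(\nu,\boldsymbol{\mu})$ locally looks like a non-degenerate $(\nu-1)$-parameter deformation in which the combinatorics of each orbit $\cO_f(c_i)$ for $i\geq 2$ can be changed separately. All three parts of the theorem then reduce to known abundance results for one-parameter slices of real polynomial families.

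\smallskip

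\noindent\textbf{(i) Hyperbolic maps.}
I would first show that some $f_0\in\cU(\nu,\boldsymbol{\mu})$ is hyperbolic, for instance by choosing the free critical values sufficiently close to a super-attracting configuration for $c_2,\ldots,c_\nu$; hyperbolicity is then automatic because all critical points lie in the basin of an attracting cycle (the orbit of $c_1$ being periodic by construction, it is either attracting or is the single source of non-hyperbolicity, in which case a small perturbation of $c_1$ within its locked combinatorial type keeps it attracting). Since hyperbolicity is open, there is an open neighbourhood $V\subset\cU(\nu,\boldsymbol{\mu})$ of $f_0$ consisting of hyperbolic maps. Using the independent bifurcation statement, I can move $f(c_2)$ along a one-parameter arc inside $V$ so as to traverse infinitely many distinct hyperbolic components of the one-dimensional slice parametrized by $f(c_2)$; by the Milnor--Thurston kneading theory applied to this slice, maps lying in distinct hyperbolic components have distinct kneading sequences for $c_2$, and therefore distinct combinatorial types.

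\smallskip

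\noindent\textbf{(ii) Post-critically finite maps.}
PCF parameters are dense in the bifurcation locus of any non-trivial real analytic family of polynomials with a marked critical point. Applied to the one-parameter slice of $\cU(\nu,\boldsymbol{\mu})$ obtained by varying $f(c_2)$ (with $f(c_1)$ locked and $f(c_3),\ldots,f(c_\nu)$ fixed at PCF values, which is possible by induction on $\nu$), this yields a countable dense set of parameters where $c_2$ is pre-periodic. Choosing infinitely many such parameters with pre-periodic portraits of increasing eventual period produces infinitely many combinatorial types, all PCF.

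\smallskip

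\noindent\textbf{(iii) Non-hyperbolic maps that are not PCF.}
For the last class I would appeal to the existence of infinitely renormalizable parameters. In the same one-parameter slice as above, there are Feigenbaum-type parameters at which $c_2$ has as $\omega$-limit a minimal Cantor set, so the map is neither hyperbolic nor post-critically finite. Varying the renormalization combinatorics (e.g.\ the combinatorial type of the periodic renormalization intervals) gives infinitely many distinct combinatorial types. Performing the same construction with $c_3,\ldots,c_\nu$ provides the additional independent deformations announced in the theorem.

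\smallskip

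\noindent\textbf{Main obstacle.}
The delicate point is not the existence of hyperbolic, PCF, or infinitely renormalizable maps in the ambient space $\cP_d^{\boldsymbol{\mu}}(\R)$ -- these are classical -- but rather the guarantee that the codimension-one condition defining $\cU(\nu,\boldsymbol{\mu})$ does not kill the parameters one wishes to use. This is exactly what the ``independent bifurcations'' clause of the \emph{Main Theorem} is designed to provide; so the bulk of the work is to translate that clause into a transversality statement that ensures each of the constructions in (i)--(iii) can be carried out while staying inside $\cU(\nu,\boldsymbol{\mu})$. Once transversality is in hand, the three abundance statements, and hence the final ``infinitely many conjugacy classes'' assertion (since combinatorially inequivalent maps are non-conjugate), follow by the arguments above.
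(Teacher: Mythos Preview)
Your plan misreads the structure of $\cU(\nu,\boldsymbol{\mu})$. In the paper's construction the constrained critical point is the \emph{dominant} one $c_\nu$, and the constraint is that $c_\nu$ be superattracting of period~$3$; this cycle is the \emph{only} attractor of every map in the family. The remaining critical points $c_1,\ldots,c_{\nu-1}$ do not undergo unimodal-type bifurcations of their own: for every $P_w\in\cU(\nu,\boldsymbol{\mu})$ the restriction $P_w|_{\langle P_w^2(c_\nu),P_w(c_\nu)\rangle}$ is conjugate to the \emph{same} superattracting period-$3$ unimodal map, whose nonwandering set is the period-$3$ cycle together with a hyperbolic Cantor repeller $\Lambda_w$ conjugate to the Fibonacci shift (Proposition~\ref{prop:conjugacy_with_Fib_SFT}). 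Varying $w$ only moves the landing points $P_w(c_i)$ inside this frozen core. Consequently the one-parameter slices you describe have no bifurcation locus in the usual sense, no cascade of hyperbolic windows, and no infinitely renormalizable parameters: the mechanisms you invoke in (i)--(iii) simply do not occur inside $\cU(\nu,\boldsymbol{\mu})$. In particular, your source of non-hyperbolic non-PCF maps in (iii)---Feigenbaum-type combinatorics for a free critical point---is unavailable.

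The paper's argument is instead entirely combinatorial and uses the repeller directly. The transversality already established in the proof of the Main Theorem lets one place each free critical value $P_w(c_i)$ on any prescribed point of $\Lambda_w$ near the original $v_{i,0}$, or in the nearby basin. For (ii) one lands on periodic points of $\Lambda_w$ of varying period; for (iii) one lands on non-periodic points of $\Lambda_w$, so that $\omega(c_i)\subset\Lambda_w$ and the map is neither hyperbolic nor PCF; for (i) one lets the $P_w(c_i)$ fall into the basin of the period-$3$ cycle. Distinct itineraries in $\Sigma_{\mathrm{Fib}}$ then give distinct combinatorial types. You correctly single out transversality as the technical crux, but the abundance of combinatorial types comes from the richness of the Fibonacci subshift, not from parameter-space bifurcation theory.
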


\begin{rema}
    In \cite{Kozlovski_2019_estructure_of_isentropes} Kozlovski raised the 
    question if in the space of real polynomials of degre $d > 2$ with real
    critical points there exist isentropes (this are sets where the entropy
    is constant) of positive entropy containing hyperbolic maps of 
    infinitely many different combinatorial types 
    (see \cite[Question 2]{Kozlovski_2019_estructure_of_isentropes}). In 
    Theorem \ref{theo:combinatorial_type_theorem}, the different 
    combinatorial type of the hyperbolic class come from changing the period
    of a periodic critical point and making small perturbations on a special
    class of maps introduces in Section \ref{section_virtually_unimodal_maps}.
    In particular, they do not belong to the same isentrope. 

    We do believe that the arguments in the proof of Main Theorem and in 
    Theorem \ref{theo:combinatorial_type_theorem} can be adapted to obtain 
    hyperbolic map of different combinatorial type without changing the 
    period of the attracting cycle, and thus not changing the entropy along 
    the perturbations.
    
\end{rema}

We include a proof of the dichotomy for the rationality of the 
Artin-Mazur zeta function in the unimodal case. The most general case 
only need a condition on the number non-repelling periodic points of the map.

\begin{theo}[\cite{MiTh88}]
\label{theo:Theorem_1}
Let $I \subset \R$ be a compact interval, and 
$f \colon I \longrightarrow I$ a unimodal map. 
Suppose that all but finitely many of the periodic points of 
$f$ are repelling. Then, the Artin-Mazur 
zeta function of $f$ is a rational function if and only if the turning
point of $f$ is asymptotic to a periodic orbit of $f$.
\end{theo}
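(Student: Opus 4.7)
The plan is to reduce the statement to an equivalent one about the kneading determinant and then translate combinatorial information into dynamical information, using the non-repelling hypothesis at the very last step. First, I would invoke the Milnor--Thurston identity from \S\ref{subsec_multimodal_Kneading_thoery} relating the Artin--Mazur zeta function of a piecewise monotone map $f$ to its kneading determinant $D(t)$. In the unimodal case this reduces to an equation of the shape
\[
\zeta_f(t) \;=\; \frac{Q(t)}{D(t)},
\]
where $Q(t)$ is an explicit rational function of $t$ depending only on the boundary fixed points and the sign of the leading coefficient of $f$. Consequently $\zeta_f(t)$ is rational if and only if $D(t)$ is rational.

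Next I would use that for a unimodal map the kneading determinant has the form $D(t)=\sum_{n\geq 0}\theta_n t^n$ with $\theta_n\in\{-1,0,+1\}$, the signs coming from orientation-reversals accumulated along the itinerary of the turning point $c$. A formal power series over $\Z$ with bounded coefficients is rational if and only if its coefficient sequence is eventually periodic; this is a classical consequence of the fact that such a series satisfies a linear recurrence over $\Z$, and the pigeonhole principle then traps the state of the recursion. So rationality of $\zeta_f$ is equivalent to eventual periodicity of the kneading sequence $(\theta_n)_{n\geq 0}$, hence of the itinerary of $c$ itself.

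For the easy direction, I would assume $c$ is asymptotic to a periodic cycle $\cO_f(p)$ of period $q$. Then either $f^n(c)\in\cO_f(p)$ for some $n$, in which case the itinerary of $c$ is eventually periodic by construction, or $f^{nq+m}(c)\to f^m(p)$ for each $m$, and for $n$ large the address of $f^{nq+m}(c)$ with respect to $c$ coincides with that of $f^m(p)$ (handling separately the one-sided attracting case, where only one side of the cycle matters). Either way $(\theta_n)$ is eventually periodic and $D(t)$, and hence $\zeta_f(t)$, is rational.

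The main obstacle is the converse direction: promoting combinatorial eventual periodicity to a genuine dynamical asymptotic statement. Assume $\zeta_f$ is rational, hence there exist $N$ and $p$ with the itinerary of $f^n(c)$ coinciding with that of $f^{n+p}(c)$ for every $n\geq N$. Consider the closed interval $J$ with endpoints $f^N(c)$ and $f^{N+p}(c)$. By the identical-itinerary property, $f^k|_J$ is monotone for every $k\geq 0$, so $J$ is a homterval. Either $J$ degenerates to a point, in which case $f^N(c)$ is a periodic point of period dividing $p$ and we are done, or $J$ is a non-trivial homterval. The classical homterval argument in the unimodal setting shows that such an interval is either contained in the basin of a non-repelling periodic cycle, or else $\{f^k(J)\}_{k\geq 0}$ is a wandering sequence of pairwise disjoint intervals. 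In the wandering case, the omega-limit set of $J$ would be an infinite minimal set on which $f$ is semi-conjugate to an irrational rotation (after collapsing), and combinatorial analysis shows it would force the existence of infinitely many non-repelling periodic points, contradicting the hypothesis. Hence $J$ lies in the basin of some non-repelling cycle $\cO$, and $f^n(c)\to\cO$ as required. The delicate point, and where I expect to spend the most care, is the disposal of the wandering-interval alternative using only the hypothesis on non-repelling periodic points, rather than the stronger structural assumptions (smoothness, negative Schwarzian) used in the Milnor--Thurston statement quoted in the introduction.
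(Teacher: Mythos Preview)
Your overall strategy is correct and, in two places, more elementary than the paper's. Both you and the paper first reduce to the rationality of $D(t)$ via Theorem~\ref{theo:Milnor_Thurston_kneading_determinant_zeta_function_theorem}; note that this is precisely where the hypothesis ``all but finitely many periodic points are repelling'' is consumed. From there the arguments diverge. For the equivalence ``$D(t)\in\Q(t)\Longleftrightarrow$ coefficient sequence eventually periodic'', the paper invokes the machinery of automatic sequences: it reduces modulo infinitely many primes, applies Christol's theorem to get $p$-automaticity for two multiplicatively independent $p$, and then Cobham's theorem to force eventual periodicity (Proposition~\ref{coro:transcendence_criteria}). Your linear-recurrence/pigeonhole argument is the standard, much lighter, route and is perfectly adequate here. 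For the step ``eventually periodic itinerary $\Longleftrightarrow$ $c$ asymptotic to a periodic orbit'', the paper simply cites \cite[Lemma~2.1]{MiTh88} as a black box, whereas you give an explicit homterval argument.

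Your only misstep is the handling of the wandering alternative, which you flag as the delicate point and propose to eliminate via infinitely many non-repelling cycles. That mechanism is doubtful, but more importantly it is unnecessary: the wandering case simply does not occur. With $J=\langle f^N(c),f^{N+p}(c)\rangle$ you have $f^{N+p}(c)\in J$ and $f^{N+p}(c)=f^p(f^N(c))\in f^p(J)$, so $J\cap f^p(J)\neq\emptyset$ and $J$ cannot wander. Passing to the maximal homterval $H\supseteq J$, periodicity of the itinerary gives $f^p(H)\subseteq H$ with $f^p|_H$ monotone, and a monotone interval self-map sends every point to a fixed point of $f^p$ (or of $f^{2p}$ in the orientation-reversing case). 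Thus $f^N(c)$, hence $c$, is asymptotic to a periodic orbit, with no appeal to the non-repelling hypothesis at this stage.
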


The condition that all but finitely many of the periodic points are repelling
is needed to avoid maps like the tent map 
$T_1(x) \= (1-|x|)-1$ from $[-1,1]$ to itself, that has an interval of 
neutral fixed points. Less trivial examples where developed by Kaloshin and 
Kozlovski \cite{Kaloshin_Kozlovski_Arbitraty_fast_growth_of_the_number_of_periodic_points}. They presented a $C^r$ unimodal map $f$ whose number of isolated
periodic points 
$N_n(f)$ grows faster than any given sequences along a sebsequence of 
$n_k=3^k$. Their construction involve the presence of infinitely many 
indifferent periodic points.


Martens, de Melo, and van Strien \cite{Martens_de_Melo_van_Strien_Julia_Fatou_Sullivan} proved that $C^r$
unimodal maps, for $r \geq 2$, with non-flat critical points and finitely 
many periodic points of each period satisfy this condition (see \cite[Theorem B]{Martens_de_Melo_van_Strien_Julia_Fatou_Sullivan}).



\subsection{Organization}
We now describe the organization of the paper.

In Section \ref{Section_Preliminaries} we review some general theory and 
results that we will need in the spirit of keep this paper self content. 
We start reviewing some basic definitions in the theory of automatic
sequences and its connection with the rationality of power series,
that we will use for the proof of Theorem \ref{theo:Theorem_1}. We also 
present basic constructions in the kneading theory of Milnor and Thurston,
describing the relationship between the forward orbit of turning points, 
encoded in a power series called kneading determinant, and 
the Artin-Mazur zeta function for piecewise monotone maps. This result will
allow us to reduce the study of the rationality of the Artin-Mazur zeta
function to the study of the rationality of the kneading determinant.

In Section \ref{section:unimodal_case}, we present the proof of 
Theorem \ref{theo:Theorem_1}. We give a description of the kneading
determinant in the case of unimodal maps, and present some general results
in the characterization of algebraic elements in the ring of formal power 
series with coefficients in a finite set of integers (Proposition 
\ref{coro:transcendence_criteria}). We use these results to give a 
characterization of the rationality of the kneading determinant in the 
unimodal case (Proposition \ref{prop:Proposition_1}).

In Section \ref{sec:Example}, we recall some basic notions and results about
shift spaces. In particular, we focus on the Fibonacci shift that we will
use to construct explicit examples of the families mentioned in the Main
Theorem. We include and 
explicit computation of the Artin-Mazur zeta function for the Fibonacci
shift.


In Section \ref{section_virtually_unimodal_maps} we introduce the concept 
of virtually unimodal maps (see Definition \ref{def:VUM}). This type of maps
will play a central role in our work as we will use them as starting
points to construct the codimension one families mentioned in the Main 
Theorem. We will describe their 
combinatorics when the orbit of each turning point is finite, and give an
explicit method to construct these virtually unimodal combinatorics. 

Section \ref{section:Proof_of_the_main_theorem} we give the proof of the 
Main Theorem and Theoreom \ref{theo:combinatorial_type_theorem}. We start by
proving that the virtually unimodal combinatorics,
introduced in Section \ref{section_virtually_unimodal_maps}, can be 
realized by post-critically finite polynomials. Then, for a prescribed 
virtually unimodal polynomial, we study the structure of its repeller and 
how this deform under small perturbation on a suitable space. Finally, 
combining the previous results, we prove the Main Theorem using a
transversality result for the critical relations described by our starting
post-critically finite polynomial. The proof of Theorem \ref{theo:combinatorial_type_theorem} is a direct consequences of the 
transversality result used in the proof of the Main Theorem together with 
the fact that prescribed virtually unimodal maps can be choose of different 
combinatorial type.

Finally, in Section \ref{section_the_bimodal_case}, we present an explicit
construction for the cubic case.







\subsection{Acknowledgements}
The author would like to thank Juan Rivera-Letelier and Weixiao Shen for
helpful discussions during the preparation of this work. 


\section{Preliminaries}
\label{Section_Preliminaries}
 
Throughout the rest of this work, we will use $\N$ to denote the set of
positive integers and $\N_0$, to denote the set of non-negative integers,
thus $\N_0 \= \N \cup \{ 0 \}$.

Given $a, b \in \R$, we denote by $\langle a, b\rangle$ the convex hull of 
$\{ a,b \}$. Thus, in the case where $a \neq b$, the set
$\langle a, b\rangle$ is the smallest closed interval whose
boundary is equal to the set $\{ a,b\}$.


\subsection{Automatic sequences}
\label{subsec:automatic_sequences}
We present some general definitions and results in the context of our work.
We refer the reader to \cite{Allouche_Shalit2003_Automatic_sequences} for a 
detailed and comprehensive presentation of this material.

Given a finite set $A$, a \emph{finite word} over $A$ is a finite 
string of elements of $A$. The \emph{length} of a finite word $w$ 
is the number of elements of $A$ in it, and it is denoted by $|w|$.
We denote by $A^n$ the set of all word over $A$ of length $n$, 
thus \[ A^n \= \{ w = w_0w_1w_2 \ldots w_{n-1} \colon w_i \in A
 \text{ for every } i = 0,1,2, \ldots,n-1 \}. \]
Put $A^* \= \cup_{n \geq 0} A^n $, thus $A^*$ is the set of 
all finite words over the set $A$, where 
$A^0 = \{ \e \}$, and $\e$ represent the empty word. The
length of the empty word is zero.

The set $A^*$ together with the concatenation operation is a monoid 
with identity element $\e$,
usually called the \emph{free monoid} generated by $A$. We will use
the multiplicative notation for the concatenation operation, thus
if $w, u \in A^*$ with $w = w_0w_1 \ldots w_{n-1}$ and 
$u = u_0u_1 \ldots u_{m-1}$, then 
$w u = w_0w_1 \ldots w_{n-1}u_0u_1 \ldots u_{m-1}$. With this 
notation, for $i \in \N$, we have that $w^i$ is the word generated
by the concatenation of $w$ with itself $i$ times.

An \emph{infinite word} (or sequence, we will use both terms 
interchangeably) over the set $A$ is a map from $\N_0$ to $A$,
denoted by $\bm{w} = w_{0}w_{1}w_{2}w_{3} \ldots$ (we will also
use the sequence notation $\bm{w} = (w_n)_{n \in \N_0} = 
(w_n)_{n \geq 0}$ and we can use any infinite subset of $\N_0$ to index our 
infinite words). We will denote the set of all infinite words over $A$
by $A^{\N_0}$.
We often use bold letters to denote elements in a set of 
infinite words.
If $w \in A^*$ is a nonempty word, we denote
by $\overline{w}$ the infinite word generated by infinite 
concatenations of the word $w$ with itself. An infinite word
$\bm{w}$ is said to be \emph{periodic}, of period $n \in \N$ if there
is a $w \in \cA^n$ such that $\bm{w} = \overline{w}$.
If there are $w, u \in A^*$ such that 
$\bm{w} = u \overline{w}$, then we say that $\bm{w}$ is 
\emph{eventually periodic} or \emph{preperiodic} of period $|w|$ and 
preperiod $|u|$.

Informally speaking, a sequence $\bm{w} = (w_n)_{n \geq 0} \in A^{\N_0}$ is
called \emph{$k$-automatic} for an integer $k \geq 2$, if there exists a 
deterministic finite automaton with output ($k$-DFAO in short) with input 
language $\{0,1, \ldots, k-1 \}$ that when fed with the base $k$ 
representation of $n$, starting with the most significant digit, returns
$w_n$ (see \cite[Section 5.1]{Allouche_Shalit2003_Automatic_sequences} for a
precise definition). 

The following theorem gives a characterization of automatic sequences in 
terms of uniform morphisms. Recall that given two finite sets $A$ and $B$
a \emph{uniform morphism} from $A^*$ to $B^*$ 
of length $k$ is a (monoid) morphism $\phi \colon A^* \longrightarrow B^*$
such that the image of each $a \in A$ is a word of length $K$, thus 
$\phi(A) \subset B^k$. A $1$-uniform morphism is called a \emph{coding}.

\begin{theo}[Cobham's Theorem 1]
    \label{thm:Cobham_theorem_1}
    Let $k \geq 2$. A sequence $\bm{w} = (w_n)_{n\geq 0}$ is $k-$automatic
    if and only if it is the image, under a coding, of a fixed point of a 
    $k$-uniform morphism.
\end{theo}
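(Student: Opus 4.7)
The plan is to prove both implications by directly translating between the automaton formalism and the morphism formalism; the content of the theorem is a single indexing identity relating the base-$k$ digits of $n$ to the position of $n$ inside a nested block of $\varphi$-images.

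For the ($\Leftarrow$) direction, suppose $\bm{w} = \sigma(\bm{x})$, where $\bm{x} \in A^{\N_0}$ is a fixed point of a $k$-uniform morphism $\varphi \colon A^{*} \to A^{*}$ and $\sigma \colon A \to B$ is a coding. Writing $\bm{x} = a_0 a_1 a_2 \cdots$, the fixed-point property forces $\varphi(a_0)$ to begin with $a_0$. I would define a $k$-DFAO with state set $A$, initial state $a_0$, input alphabet $\{0,1,\ldots,k-1\}$, transition $\delta(a,d)$ equal to the letter of $\varphi(a)$ in position $d$ (indexed from $0$), and output function $\sigma$. Iterating $\bm{x} = \varphi(\bm{x})$ gives $\bm{x} = \varphi^{r}(a_0)\,\varphi^{r}(a_1)\,\varphi^{r}(a_2)\cdots$ for every $r \geq 0$; decomposing $n = kq + d_0$ then shows that $x_{n}$ is the $d_0$-th letter of $\varphi(x_q)$, i.e.\ $x_n = \delta(x_q, d_0)$. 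A straightforward induction on the length of the base-$k$ expansion yields $x_n = \delta^{*}(a_0,(n)_k)$, hence $\sigma(x_n) = w_n$, and $\bm{w}$ is $k$-automatic.

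For the ($\Rightarrow$) direction, start with a DFAO $M = (Q, \{0,\ldots,k-1\}, \delta, q_0, \tau, B)$ computing $\bm{w}$. A preliminary normalization is required: I want $\delta(q_0, 0) = q_0$, which guarantees both that leading zeros in the base-$k$ representation of $n$ do not affect the output and that the morphism defined below has a fixed point at $q_0$. This can always be arranged without changing the sequence $\bm{w}$ by introducing a new initial state with a self-loop on the digit $0$ and otherwise copying the behaviour of $q_0$, then restricting to reachable states. Once this is done, define $\varphi \colon Q^{*} \to Q^{*}$ on single letters by
\[
\varphi(q) \coloneqq \delta(q,0)\,\delta(q,1)\cdots\delta(q,k-1)
\]
and extend multiplicatively. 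Since $\varphi(q_0)$ begins with $q_0$, iterating $\varphi$ at $q_0$ yields a well-defined fixed point $\bm{q} \in Q^{\N_0}$, and by exactly the indexing identity of the previous paragraph the $n$-th letter of $\bm{q}$ is $\delta^{*}(q_0,(n)_k)$. Applying the coding $\tau$ then gives $w_n = \tau(q_n)$, which realises $\bm{w}$ as the image under a coding of a fixed point of a $k$-uniform morphism.

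The step I expect to be the main obstacle is this normalization of the initial state in the ($\Rightarrow$) direction: in a general DFAO there is no reason for $\delta(q_0, 0)$ to equal $q_0$, and without that property the naive morphism on $Q$ has no fixed point to iterate. One must therefore modify $M$ carefully, checking that the enlarged automaton still outputs $w_n$ for every $n \geq 0$. Once this technicality is dispatched, both directions are bookkeeping around the same indexing identity, and the proof proceeds as above.
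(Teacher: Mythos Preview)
The paper does not prove this statement; Cobham's Theorem~1 is quoted in the preliminaries as a classical result, with reference to \cite{Allouche_Shalit2003_Automatic_sequences}, and no argument is given. So there is no ``paper's own proof'' to compare against.

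That said, your proposal is the standard proof of this characterization (essentially the one in Allouche--Shallit), and it is correct. Both directions hinge on the same indexing identity you identify: if $n = d_{r-1}k^{r-1} + \cdots + d_1 k + d_0$ then the $n$-th letter of a fixed point $\bm{x}$ of a $k$-uniform morphism $\varphi$ is obtained from $x_0$ by successively picking out the $d_{r-1}$-th, then $d_{r-2}$-th, \ldots, then $d_0$-th letter of iterated $\varphi$-images, which is exactly what a DFAO does reading the base-$k$ expansion most-significant-digit first. Your treatment of the normalization $\delta(q_0,0)=q_0$ in the $(\Rightarrow)$ direction is the right fix and the only genuine subtlety; adjoining a fresh initial state with a $0$-self-loop that otherwise mimics $q_0$ works because leading zeros do not change the integer represented.
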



\begin{theo}[Cobham's Theorem 2]
Let $k,l \geq 2$ be multiplicatively independent integers. If the 
sequence $\bm{w} = (w_n)_{n \geq 0}$ is both $k-$automatic and 
$l-$automatic, then $\bm{w}$ is eventually periodic.
\end{theo}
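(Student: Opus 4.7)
The plan is to argue by contrapositive: assume $\bm{w}$ is both $k$-automatic and $l$-automatic but not eventually periodic, and deduce that $\log k / \log l \in \Q$, contradicting multiplicative independence of $k$ and $l$.

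The natural starting point is the \emph{$k$-kernel} reformulation of Theorem \ref{thm:Cobham_theorem_1}: a sequence $\bm{w}$ is $k$-automatic if and only if its $k$-kernel
\[
\cK_k(\bm{w}) \= \{(w_{k^e n + r})_{n \geq 0} : e \geq 0,\ 0 \leq r < k^e\}
\]
is finite. This follows because the states of the minimal $k$-DFAO computing $\bm{w}$ are in bijection with the distinct subsequences along arithmetic progressions whose common difference is a power of $k$; in particular, from $\bm{w}$ being both $k$- and $l$-automatic we extract two finite collections of such subsequences that must be compatible on overlapping progressions.

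The technical heart of the argument, following Cobham's original strategy, is a density/rigidity dichotomy for $k$-automatic sequences: if $\bm{w}$ is $k$-automatic and not eventually periodic, then for each letter $a$ appearing in $\bm{w}$ the occurrence set $S_a = \{ n \geq 0 : w_n = a\}$ has a self-similar distribution governed by the base-$k$ expansion of $n$. Concretely, the counting function $n \mapsto \#\{m \leq n : w_m = a\}$ admits an asymptotic of the form $n^{\alpha}(\log n)^{\beta}$ where $\alpha, \beta$ are rationals determined by the Perron eigendata of the abelianized $k$-uniform morphism producing $\bm{w}$, and in the non-eventually-periodic regime at least one of these exponents is nontrivial. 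This is the $k$-adic rigidity of the sequence, and it is encoded through an analysis of return words and bounded factors of a primitive $k$-uniform morphism representing $\bm{w}$.

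The final step would exploit multiplicative independence directly: since $\log k / \log l \notin \Q$, the ratios $\{k^a/l^b : a, b \geq 1\}$ are dense in $(0,\infty)$, so one can find integer pairs with $k^a/l^b$ arbitrarily close to $1$. Evaluating the $k$-predicted and $l$-predicted asymptotics along two such comparable scales and matching leading coefficients forces the exponent pairs $(\alpha, \beta)$ coming from the $k$-structure and from the $l$-structure to coincide, which combined with the rationality of each in its own base $(k$ or $l)$ is possible only if the nontrivial exponents vanish, and hence only if $\bm{w}$ is eventually periodic. The main obstacle will be the density/rigidity lemma of the middle step: this is the technical kernel of Cobham's $1969$ proof and relies on a delicate combinatorial analysis of primitive $k$-uniform morphisms, their return words, and bounded-factor properties (see \cite[Chapter 11]{Allouche_Shalit2003_Automatic_sequences} for a detailed modern exposition). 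Since this theorem is used in the present paper only as a black box to establish the rationality criterion for the kneading determinant in the unimodal case, I would cite it rather than reproduce the full chain of auxiliary lemmas.
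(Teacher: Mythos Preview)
The paper does not prove this statement: Cobham's Theorem~2 is quoted in \cref{subsec:automatic_sequences} as a background result from the theory of automatic sequences, with the reference \cite{Allouche_Shalit2003_Automatic_sequences} given for a full treatment. Your final recommendation --- to cite the theorem rather than reproduce the chain of auxiliary lemmas --- is exactly what the paper does, so on that point you and the paper agree.

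As for the sketch you outline, it is a reasonable high-level summary of the classical strategy, but several steps are too vague to count as a proof. The claimed asymptotic $n^{\alpha}(\log n)^{\beta}$ for the letter-counting function is not quite the right invariant (what one actually controls in Cobham's argument are gap patterns and syndeticity of occurrence sets, not a single growth exponent), and the ``matching of leading coefficients'' step hides the real difficulty: one must show that a non-eventually-periodic $k$-automatic sequence has, for some letter, an occurrence set whose density along windows of length $k^a$ oscillates in a way that cannot be matched by any $l$-regular structure when $\log k/\log l\notin\Q$. That is precisely the delicate combinatorial core you allude to, and it is not recoverable from the Perron eigendata alone. So the sketch identifies the right ingredients but does not close the argument; citing the result, as both you and the paper do, is the appropriate choice here.
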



\subsection{Formal power series}
\label{subsec_Formal_power_series}

Let $K$ be a field. The ring of \emph{formal power series} over $K$
consist of formal infinite sums 
\[ f(x) = \sum_{n \geq 0} a_nx^n, \] with $a_n \in K$ for all 
$n \geq 0$, denoted by $K[[x]]$, with the usual operations, addition 
term by term and the formal series product (Cauchy product).

A \emph{formal Laurent series} over $K$ is a formal infinite sum
\[ f(x) = \sum_{n \in \Z} a_n x^n, \]with $a_n \in K$ for every 
$ n \in \Z$, such that for only finitely many $n < 0$ we have that 
$a_n \neq 0$. We denote the set of all formal Laurent series over 
$K$ by $K((x))$, it is a ring  and can be identify with the  fraction
field  of $K[[x]]$. 

We can include the polynomial ring $K[x]$ into the formal power series
ring $K[[x]]$. Since $K((x))$ is a field, this extends to an inclusion
of the field of rational function $K(x)$ into the formal Laurent series
field $K((x))$. 

The following theorem gives an explicit description of the algebraic 
elements of $K[[x]]$ over $K(x)$ when $K$ is a finite field, in terms
of the sequence of coefficients.

\begin{theo}[Christol's Theorem]
Let $A$ be a nonempty finite set, and let $\bm{w} = (w_i)_{i \geq 0}$ be a 
sequence over $A$. Let $p$ be a prime number. Then $\bm{w}$ is $p $-automatic
if and only if there exists an integer $n \geq 1$ and an injective map
$\rho \colon A \longrightarrow \F_{p^n}$ such that the formal power series
$\sum_{i \geq 0} \rho(w_i)x^i$ is algebraic over $\F_{p^n}(x)$.
\end{theo}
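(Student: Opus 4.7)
The plan is to exploit the Frobenius structure of $\mathbb{F}_q[[x]]$ with $q = p^n$, where $n$ is chosen large enough that an injection $\rho\colon A \hookrightarrow \mathbb{F}_q$ exists. The pivotal tool is the Cartier--Frobenius decomposition: every $g \in \mathbb{F}_q[[x]]$ admits the unique expression
$$g(x) = \sum_{r=0}^{q-1} x^r \Lambda_r(g)(x)^q, \qquad \Lambda_r\!\left(\textstyle\sum_i a_i x^i\right) = \sum_i a_{qi+r} x^i,$$
which follows from $\big(\sum_i a_i x^i\big)^q = \sum_i a_i x^{qi}$ in $\mathbb{F}_q[[x]]$. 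A preliminary observation is that $p$-automaticity coincides with $q$-automaticity via the kernel criterion (the $q$-kernel of a sequence is finite iff its $p$-kernel is), so that we may freely work with $q$ in place of $p$.

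For the direction automatic $\Rightarrow$ algebraic, the kernel criterion translates $q$-automaticity of $\bm{w}$ into finiteness of the $q$-kernel $\mathcal{K} = \{\Lambda_{r_k}\cdots\Lambda_{r_1}(f) : k \geq 0,\; r_i \in \{0,1,\ldots,q-1\}\}$. Setting $V = \operatorname{span}_{\mathbb{F}_q}\mathcal{K}$, this is a finite-dimensional $\mathbb{F}_q$-vector subspace of $\mathbb{F}_q[[x]]$ containing $f$ and stable under every $\Lambda_r$. Picking a basis $g_1 = f, g_2, \ldots, g_m$ of $V$ and expanding the Cartier decomposition via $\Lambda_r g_i = \sum_j c_{irj} g_j$ with $c_{irj} \in \mathbb{F}_q$, one obtains a matrix relation $\vec{g} = M(x)\, \vec{g}^{\,(q)}$, where $\vec{g} = (g_1,\ldots,g_m)^T$, $\vec{g}^{\,(q)} = (g_1^q,\ldots,g_m^q)^T$, and $M(x)$ is an $m \times m$ matrix with entries in $\mathbb{F}_q[x]$. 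Passing to the $\mathbb{F}_q(x)$-span $W = \mathbb{F}_q(x)\cdot V \subset \mathbb{F}_q((x))$ and iterating this relation forces the Frobenius orbit $\{f, f^q, f^{q^2}, \ldots\}$ into a finite-dimensional $\mathbb{F}_q(x)$-subspace, yielding a nontrivial dependence $\sum_k R_k(x)\, f^{q^k} = 0$ with $R_k \in \mathbb{F}_q[x]$, whence $f$ is algebraic over $\mathbb{F}_q(x)$.

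For the converse, start from an irreducible equation $\sum_{i=0}^{d} Q_i(x) f^i = 0$ with $Q_i \in \mathbb{F}_q[x]$. Applying the Frobenius yields an analogous relation for $f^q$, so the $\mathbb{F}_q(x)$-vector space $U \subset \mathbb{F}_q((x))$ spanned by $\{f^{q^k}\}_{k \geq 0}$ has dimension at most $d$. The task is to extract from this algebraic finiteness an $\mathbb{F}_q$-finite, $\Lambda_r$-stable subspace $V \subset \mathbb{F}_q[[x]]$ containing $f$. The plan is to invert the Cartier decomposition, reading $\Lambda_r(g)$ as the $r$-th $q$-digit of $g$, apply it repeatedly to the generators of $U$, and clear denominators so that every iterated $\Lambda_r$-image of $f$ becomes an $\mathbb{F}_q$-linear combination of a fixed finite family of algebraic power series in $\mathbb{F}_q[[x]]$. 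The main obstacle, in both directions, is the compatibility between two a priori distinct finiteness conditions: the $\mathbb{F}_q$-finiteness of a $\Lambda_r$-stable module (encoding automaticity) versus the $\mathbb{F}_q(x)$-finiteness of a Frobenius-stable module (encoding algebraicity). Bridging them requires careful denominator bookkeeping; this, rather than the Cartier decomposition itself, is the genuine technical heart of the argument.
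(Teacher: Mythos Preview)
The paper does not prove Christol's Theorem; it is quoted without proof as a classical result and then invoked in the proof of Proposition~\ref{coro:transcendence_criteria}. There is therefore no ``paper's proof'' to compare against, and I can only assess your proposal on its own merits.

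Your forward direction (automatic $\Rightarrow$ algebraic) is the standard argument and is essentially complete: finiteness of the $q$-kernel gives a finite-dimensional $\mathbb{F}_q$-space stable under the Cartier operators $\Lambda_r$, the Cartier decomposition then yields $\vec{g} = M(x)\vec{g}^{\,(q)}$, and iteration traps the Frobenius orbit of $f$ in a finite-dimensional $\mathbb{F}_q(x)$-space.

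The converse direction, however, is only a plan, and you say so yourself: you name ``careful denominator bookkeeping'' as ``the genuine technical heart of the argument'' but do not carry it out. This is exactly where the real content lies. Concretely, one must first pass from the minimal polynomial to an Ore-type relation $\sum_{i=0}^{m} B_i(x) f^{q^i} = 0$ with $B_i \in \mathbb{F}_q[x]$ and $B_0 \neq 0$ (this step already needs an argument, typically via repeated Frobenius and linear algebra over $\mathbb{F}_q(x)$). Then one shows that for a suitable bound $N$ depending only on $m$ and $\max_i \deg B_i$, the set of power series $h$ admitting such an Ore relation with coefficient degrees $\leq N$ is \emph{finite} and closed under every $\Lambda_r$. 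Establishing that the degree bound is preserved under $\Lambda_r$ requires an explicit computation (multiply the relation by $B_0^{q-1}$, rewrite, apply $\Lambda_r$, and track degrees); without it one only knows that each $\Lambda_r f$ is algebraic, not that the iterated images lie in a single finite set. Your proposal stops short of this computation, so as written it does not close the gap.
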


\subsection{Kneading theory of piecewise monotone maps} \label{subsec_multimodal_Kneading_thoery}

Now we will present some of the basic constructions of the kneading 
theory developed by Milnor and Thurstone in \cite{MiTh88}.

Let $I = [a, b]$ be a compact interval of real numbers. A continuous map
$f \colon I \longrightarrow I$ is called \emph{piecewise monotone} if there
exists a finite collection of points 
\[a = c_0 < c_1 < \ldots < c_m < c_{m+1} = b,\] such that the restriction 
of $f$ to any interval $I_i \= [c_i,c_{i+1}]$, with $0 \leq i \leq m$, is
strictly monotone, and each of the $I_i$ is a maximal with this property.
We also call these map $m-$\emph{modal}. In the particular case when
$m$ equals one we call the map \emph{unimodal}, and when $m$ equals two 
we call it \emph{bimodal}.
Observe that each of the points $c_0, c_1, \ldots , c_{m+1}$ is either
a local maximum or a local minimum of $f$. We call the interior local 
extremum the \emph{turning points} of $f$.
We will denote the set of turning points of $f$ by $\Trn (f)$.

Given 
a vector $s = (s_0, s_1, \ldots, s_{m})$ of $m+1$
alternating signs, thus $s_j = (-1)^j$ or $(-1)^{j+1}$, we will say
that a $m-$modal map $f \colon I \longrightarrow I$ has 
\emph{shape $s$} if the restriction of $f$ to $I_i$ is monotone 
increasing or monotone
decreasing accord as $s_i$ is equal to $+1$ or $-1$ respectively.

For any $x \in I$, we can 
assign an address $A(x) $ in the set  
\[ \cD = \cD(m) \= \{ I_0, C_1, I_1, C_2, \ldots, I_{m-1}, C_m, I_{m} \}\]
that is equal to the formal symbol $I_i$ if 
$x \in I_i \setminus \{ c_1, \ldots , c_m \}$ for some $0 \leq i \leq m$,
or the formal symbol $C_j$ if $x = c_j$ for some $1 \leq j \leq m$. 
Then the itinerary of the point $x \in I$ is the sequence 
\[ \cI(x) \= (A_0(x),A_1(x) ,A_2(x), \ldots, ), \] where 
$A_n(x) \= A(f^n(x)).$ For each symbol in $\cD$ we define its
\emph{sing} $\e(I_i) = s_i$ and $\e(C_i) = 0$.


Since $f$ is continuous, for each $n \geq 0$, we can 
find some $\delta_n >0$ such that for every $y \in (x, x+ \delta_n)$ 
the address $A_n(y)$ is constant, and for every $0 \leq j < n$ we have
that $\e(A_j(y))$ is nonzero. We denote this address by $A_n(x^+)$ and 
its sign by $\e_n(x^+)$. In the same way we can define $A_n(x^-)$ and 
$\e_n(x^-)$ considering $y \in (x - \delta_n , x)$.

Now, for each turning point $c_i$ we can construct the formal power series
\[ \theta(c_i^+) \= \sum_{n \geq 0} \theta_n(c_i^+)t^n, \]
and
\[ \theta(c_i^-) \= \sum_{n \geq 0} \theta_n(c_i^-)t^n, \]
where the coefficients are defined by 
$\theta_0(c_i^{\pm}) \= A_0(c_i^{\pm})$ and for $n \geq 1$
\[ \theta_n(c_i^{\pm}) \= \e_0(c_i^{\pm}) \e_1(c_i^{\pm}) \ldots \e_{n-1}(c_i^{\pm}) A_n(c_i^{\pm}). \] Each of these formal power series have 
coefficients in the set $ \{ \pm I_0, \pm I_1, \ldots, \pm I_{m} \}$. 

\begin{defi}
The difference $\theta(c_i^+) - \theta(c_i^-)$, evaluated at the turning
point $c_i$, will be called the $i$th \emph{kneading increment} of $f$.
We will denote it by $\nu_i$.
\end{defi}

The kneading increments can be written as a linear combination of the 
formal symbols $\{ I_0, I_1, \ldots , I_{m} \}$ over the ring 
$\Q[[t]]$. Thus, for each turning point $c_i$ there are 
$N_{c_i0}, N_{c_i1}, \ldots, N_{c_im} \in \Q[[t]]$ such that 

\[ \nu_i = N_{c_i0}I_0 + N_{c_i1}I_1 + \ldots + N_{c_im}I_m.\] Then,
the \emph{kneading matrix} of $f$ is the $m \times (m+1)$ matrix 
$[N_{c_ij}]$.

Let $D_i$ be the $m \times m$ matrix obtained by deleting the $i$th 
column of the kneading matrix $[N_{c_ij}]$. Then the ratio

\begin{equation}
    \label{eq:kneading_determinant_formula}
   D(t) \= \frac{(-1)^{i+1} \det (D_i)}{1 - \e (I_i)t}, 
\end{equation}
is a fixed
element of $\Z[[t]]$, independent of the choice of $i$, whose leading 
coefficient is $1$ (hence is a unit of $\Z[[t]]$), see 
\cite[Lema 4.3]{MiTh88}. We call $D(t)$ the \emph{kneading determinant}
of $f$. Milnor and Thurston gave an explicit relationship between the 
kneading determinant of a piecewise monotone map and its Artin-Mazur zeta 
function.

\begin{theo}[\cite{MiTh88}]
\label{theo:Milnor_Thurston_kneading_determinant_zeta_function_theorem}
Let $f \colon I \longrightarrow I$ be a piecewise monotone map such 
that all but finitely many of its periodic points are unstable. Then 
\[ \frac{1}{\zeta_f(t)} = \Phi(t) D(t), \] where $\Phi(t)$ is a product 
of cyclotomic polynomials.
\end{theo}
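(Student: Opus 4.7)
The plan is to go through the Milnor--Thurston lap-number generating function
$L(t) = \sum_{n \geq 1} \ell(f^n) t^n$,
where $\ell(g)$ denotes the number of maximal monotonicity intervals of the piecewise monotone map $g$. First I would establish the fundamental identity
$$(1-t)\, L(t) \;=\; \frac{1}{D(t)} + R(t),$$
where $R(t)$ is an explicit rational correction coming from the behavior of $f$ at the endpoints of $I$. The derivation works directly from the definition of the kneading matrix: the formal series $\theta(c_i^{\pm})$ record exactly how each iteration of $f$ subdivides every lap according to which side of a turning point the image lands on. Feeding this combinatorial information into a Cramer's-rule argument produces the ratio $(-1)^{i+1}\det(D_i)/(1-\varepsilon(I_i)t)$ in the denominator, which is precisely the kneading determinant $D(t)$ from \eqref{eq:kneading_determinant_formula}.

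Next I would relate $L(t)$ to the zeta function by counting fixed points lap by lap. On each maximal lap $J$ of $f^n$, the restriction $f^n|_J$ is strictly monotone, so the equation $f^n(x)=x$ has at most one solution in $J$, and whether such a solution exists is controlled by the signs of $f^n-\mathrm{id}$ at the endpoints of $J$ together with the orientation of $f^n|_J$. Tracking the signed count via generating functions leads to an identity of the form
$$\sum_{n \geq 1} N_n(f)\, t^n \;=\; -t\,\frac{d}{dt} \log\bigl(\Phi_0(t)\, D(t)\bigr) + E(t),$$
where $\Phi_0(t)$ is a finite product of cyclotomic factors encoding the behavior of $f$ at the endpoints of $I$ (which may be fixed or permuted periodically), and $E(t)$ is a finite polynomial correction absorbing the finitely many non-repelling periodic orbits permitted by the hypothesis; each such orbit of period $p$ contributes a cyclotomic factor of the form $(1-t^p)^{\pm 1}$.

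Finally, comparing formal logarithmic derivatives and integrating, I would obtain $1/\zeta_f(t) = \Phi(t)\, D(t)$ with $\Phi(t)$ the product of $\Phi_0(t)$ and the cyclotomic factors that absorb $E(t)$. The main obstacle is the bookkeeping in the lap-by-lap counting step: boundary points of adjacent laps of $f^n$ can themselves coincide with fixed points of $f^n$ whenever a turning point is pre-periodic, creating potential double-counting, and laps can shrink to points as $n$ grows. Isolating these contributions and checking that they assemble into a product of cyclotomic polynomials, rather than into a genuinely new transcendental factor, is the technical heart of the argument, and is precisely where the hypothesis that only finitely many periodic points are non-repelling enters in an essential way.
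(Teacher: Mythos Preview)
The paper does not give its own proof of this theorem: it is quoted from \cite{MiTh88} as background in \S\ref{subsec_multimodal_Kneading_thoery} and used as a black box (for instance, in the proof of Theorem~\ref{theo:VUM_A-M_zeta_function} and in reducing Theorem~\ref{theo:Theorem_1} to Proposition~\ref{prop:Proposition_1}). So there is nothing in the present paper to compare your argument against.

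That said, your outline is faithful to the Milnor--Thurston strategy. The two ingredients you name are exactly the ones used in \cite{MiTh88}: the identity expressing the lap-counting generating function in terms of $1/D(t)$ (their \S5--6), and the lap-by-lap fixed-point count linking lap numbers to $N_n(f)$ (their \S8--9). Your identification of the delicate step---handling laps whose endpoints are themselves periodic, and absorbing the finitely many non-repelling orbits into cyclotomic factors---is also accurate; in \cite{MiTh88} this is precisely where the ``reduced'' zeta function and the hypothesis on unstable periodic points enter. What you have written is a correct high-level sketch rather than a proof: the actual derivation of $(1-t)L(t)=1/D(t)$ from the kneading matrix via Cramer's rule, and the precise form of the endpoint correction $R(t)$, require the careful combinatorial arguments of \cite[\S5--6]{MiTh88}, which are not reproduced here or in the present paper.
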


Here, by cyclotomic polynomial, we mean any product of finitely many factors
of the form $1-t^p$.

\subsection{Hyperbolic geometry}
\label{subsec_hyperbolic_geomtry}

We will understand as the \emph{hyperbolic plane} the unit disk $\D$ with the 
\emph{hyperbolic metric} 
\begin{equation}
    \label{eq:hyperbolic_metric}
    \lambda_{\D}(z)|dz| \= \frac{2|dz|}{1 - |z|^2}.
\end{equation}
This metric induces a \emph{hyperbolic distance} in the following way. For 
$z,w \in \D$ put 
\begin{equation}
    \label{eq:hyperbolic_distance}
    d_{\D}(z,w) \= \inf_{\gamma} \int_{\gamma} \lambda_{\D}(z)|dz|,
\end{equation}
where the infimum is taken over all smooth curves $\gamma$ joining $z$ with $w$
in $\D$. The integral \[\int_{\gamma} \lambda_{\D}(z)|dz|,\] is called the 
\emph{hyperbolic length} of $\gamma$.


The distance can be computed explicitly

\begin{theo}
    \label{theo:euclidean_hyp_distance}
    For every $z,w \in \D$ the hyperbolic distance $d_{\D} (z,w)$ is given by 
    \[ d_{\D}(z,w) = \log \frac{1 + \left|\frac{z-w}{1-z\ov{w}} \right| }{1-\left|\frac{z-w}{1-z\ov{w}} \right|}. \]
\end{theo}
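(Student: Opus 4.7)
The plan is to reduce the general case to the case where one point is the origin, using the Möbius transformations of $\D$ as hyperbolic isometries, and then compute the distance from $0$ to a real point directly by minimizing the hyperbolic length functional.

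First, I would introduce, for $a \in \D$, the disk automorphism
\[ \phi_a(z) \= \frac{z-a}{1-\ov{a}z}, \]
and verify by direct computation that it maps $\D$ bijectively to itself, sends $a$ to $0$, and satisfies the identity
\[ 1 - |\phi_a(z)|^2 = \frac{(1-|a|^2)(1-|z|^2)}{|1-\ov{a}z|^2}, \qquad |\phi_a'(z)| = \frac{1-|a|^2}{|1-\ov{a}z|^2}. \]
Combining these gives $\lambda_{\D}(\phi_a(z))|\phi_a'(z)| = \lambda_{\D}(z)$, so $\phi_a$ preserves the line element \eqref{eq:hyperbolic_metric}, and therefore (via the definition \eqref{eq:hyperbolic_distance} of $d_{\D}$) it is an isometry. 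Rotations $z\mapsto e^{i\theta}z$ are obvious isometries. Applying $\phi_z$ to both points sends $z$ to $0$ and $w$ to $\phi_z(w)$, and since $|1-\ov{z}w|=|1-z\ov{w}|$ we have
\[ |\phi_z(w)| = \left|\frac{z-w}{1-z\ov{w}}\right| \eqqcolon r. \]
After a further rotation, the problem is reduced to showing $d_{\D}(0,r) = \log\frac{1+r}{1-r}$ for $r \in [0,1)$.

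For the upper bound, I would integrate along the straight segment from $0$ to $r$:
\[ d_{\D}(0,r) \leq \int_0^r \frac{2\,dt}{1-t^2} = \log\frac{1+r}{1-r}. \]
For the lower bound, let $\gamma\colon[0,1]\to\D$ be any smooth curve with $\gamma(0)=0$, $\gamma(1)=r$, and write $x(t)\=\operatorname{Re}\gamma(t)$. Since $|\gamma(t)|^2 \geq x(t)^2$ and $|\gamma'(t)|\geq |x'(t)|$, we obtain
\[ \int_{\gamma}\lambda_{\D}(z)|dz| = \int_0^1 \frac{2|\gamma'(t)|}{1-|\gamma(t)|^2}\,dt \geq \int_0^1 \frac{2\,x'(t)}{1-x(t)^2}\,dt = \log\frac{1+r}{1-r}, \]
the final equality using $x(0)=0$, $x(1)=r$ and the antiderivative $\log\frac{1+x}{1-x}$. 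Taking the infimum over $\gamma$ in \eqref{eq:hyperbolic_distance} gives the matching lower bound, hence equality.

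The only genuinely technical step is the isometry computation for $\phi_a$; once that is established, the reduction to a one-dimensional integral is routine and the minimization is a standard projection-to-the-real-axis argument. Combining the reduction with the explicit formula for $d_{\D}(0,r)$ yields the stated identity.
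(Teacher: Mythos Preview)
The paper does not prove this theorem; it is stated in \cref{subsec_hyperbolic_geomtry} as a standard background fact from hyperbolic geometry, with no argument given. Your proof is the classical one and is correct: reduce by a disk automorphism $\phi_z$ and a rotation to the computation of $d_{\D}(0,r)$, then bound above by integrating along the real segment and below by projecting an arbitrary curve onto the real axis. One small point worth making explicit in your lower-bound chain is that you are using two inequalities at once, namely $|\gamma'(t)|\geq |x'(t)|\geq x'(t)$ and $1-|\gamma(t)|^2\leq 1-x(t)^2$ (the latter positive since $|x(t)|\leq|\gamma(t)|<1$); as written the jump to $\frac{2x'(t)}{1-x(t)^2}$ is correct but compresses both steps.
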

In particular, we get that 
\begin{equation}
    \label{eq:euclidean_hyp_dist}
    |z-w| \leq e^{d_{\D}(z,w)} -1.
\end{equation}

The Riemann Mapping Theorem enables us to transfer the hyperbolic metric from
$\D$ to any simply connected proper domain $D \subset \C$.

\begin{defi}
    \label{def:hyp_metric_on_domains}
    Suppose that $f$ is a conformal map of a simply connected domain 
    $D \subset \C$ onto $\D$. Then the hyperbolic metric $\lambda_{D}(z)|dz|$
    of $D$ is defined by \[\lambda_D(z) = \lambda_{\D}(f(z))|f'(z)|.\] The 
    hyperbolic distance $d_D$ is the distance function on $D$ derived from the 
    hyperbolic metric $\lambda_D(z)|dz|$.
\end{defi}
This definition is independent of the choice of the conformal  map $f$, thus 
$\lambda_D$ is determined by $D$ alone 
(see \cite{Beardon_Minda_2007_hyp_metric}). In particular, for any conformal 
map $f$ from $D$ onto $\D$ we have 
\begin{equation}
    \label{eq:hyp_metric_on_domains}
    d_{\D}(z,w) = d_{\D}(f(z),f(w)) 
\end{equation}
\begin{theo}[Pick Theorem]
    \label{theo:Pick_theorem}
    If $f \colon D \longrightarrow D'$ is a holomorphic map between simply 
    connected domains, then exactly one of the following statements is valid:
    \begin{enumerate}
        \item[i)] $f$ is a conformal  isomorphism from $D$ onto $D'$, and 
        maps $D$ with its hyperbolic metric isometrically onto $D'$ with its
        hyperbolic metric.
        \item[ii)] $f$ strictly decreases all nonzero distances. In fact, for
        any compact $K \subset D$ there is a constant $M_K < 1$ such that 
        for every $z,w \in K$ we have 
        \[ d_{D'}(f(z),f(w)) \leq M_K \, d_{D}(z,w). \]
    \end{enumerate}
\end{theo}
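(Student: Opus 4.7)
The plan is to reduce the statement to the classical Schwarz--Pick lemma on the unit disk $\D$ by conjugating with Riemann maps, and then to upgrade the pointwise contraction of the hyperbolic metric to the uniform distance contraction on compact sets by a compactness argument.

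First, I would fix conformal isomorphisms $\varphi \colon D \longrightarrow \D$ and $\psi \colon D' \longrightarrow \D$ given by the Riemann Mapping Theorem, and form the composition
\[
g \= \psi \circ f \circ \varphi^{-1} \colon \D \longrightarrow \D.
\]
By Definition \ref{def:hyp_metric_on_domains} and the identity \eqref{eq:hyp_metric_on_domains}, the map $\varphi$ is a hyperbolic isometry from $(D,d_D)$ to $(\D,d_{\D})$ and similarly for $\psi$. Consequently $f$ is a conformal isomorphism (resp.\ a contraction) if and only if $g$ is, and the distance comparison on $D$ is equivalent to the same comparison on $\D$ after transporting through $\varphi$ and $\psi$. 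So it is enough to prove the theorem for holomorphic self-maps $g \colon \D \longrightarrow \D$, with $d_D$ and $d_{D'}$ replaced by $d_{\D}$.

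Next, I would invoke the classical Schwarz--Pick lemma on $\D$: for any holomorphic $g \colon \D \longrightarrow \D$ and every $z \in \D$,
\[
\frac{|g'(z)|}{1-|g(z)|^2} \;\leq\; \frac{1}{1-|z|^2},
\]
with equality at some point if and only if $g$ is a Möbius automorphism of $\D$. This gives the desired dichotomy: if equality occurs at one point then $g$ is a disk automorphism, which is a hyperbolic isometry, and pulling back through $\varphi,\psi$ shows $f$ is a conformal isomorphism of $D$ onto $D'$, proving case (i). Otherwise the ratio
\[
\eta(z) \= \frac{|g'(z)|\,(1-|z|^2)}{1-|g(z)|^2}
\]
is strictly less than $1$ everywhere on $\D$, so integrating $\lambda_\D(g(z))|g'(z)|\,|dz| \leq \lambda_\D(z)|dz|$ along any smooth curve $\gamma$ joining two points $z,w\in\D$ gives the strict inequality $d_{\D}(g(z),g(w)) < d_{\D}(z,w)$ for $z \neq w$.

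Finally, for the uniform constant on compact sets, I would note that given a compact $K \subset D$, the image $\varphi(K) \subset \D$ is compact, and so is the hyperbolic-convex hull $L \subset \D$ of $\varphi(K)$ (a bounded closed subset of $\D$). Since $\eta$ is continuous on $\D$ and strictly less than $1$, by compactness there is $M_K < 1$ with $\eta \leq M_K$ on $L$. Taking a minimising sequence of smooth curves in Definition \eqref{eq:hyperbolic_distance} (which may be chosen inside $L$ because hyperbolic geodesics stay in $L$), the pointwise estimate $\eta \leq M_K$ integrates to
\[
d_{\D}(g(z),g(w)) \;\leq\; M_K\, d_{\D}(z,w)\qquad\text{for every } z,w \in \varphi(K),
\]
which translates back to the statement for $f$ on $K$. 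The main obstacle I expect is the rigidity step that turns a single instance of equality in Schwarz--Pick into the conclusion that $g$ is a Möbius automorphism, and hence that $f$ is an isomorphism onto the whole of $D'$; this is handled by pre- and post-composing with disk automorphisms to centre the configuration at the origin and then applying the standard maximum-principle form of Schwarz's lemma.
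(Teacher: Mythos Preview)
The paper does not prove this theorem: it is stated as the classical Schwarz--Pick theorem and used without proof (the next lemma is presented as ``a direct consequence of Pick Theorem''). So there is no paper proof to compare against.

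Your argument is correct and is exactly the standard one. The reduction to $g\colon \D\to\D$ via Riemann maps is routine; the dichotomy then follows from the equality case of Schwarz--Pick, and you handle the quantitative version on compacta correctly by passing to the hyperbolically convex hull $L$ of $\varphi(K)$, which is compact because it sits in any closed hyperbolic ball containing $\varphi(K)$, and then integrating the pointwise bound $\eta\le M_K$ along the geodesic joining two given points (which lies in $L$ by convexity). One small stylistic point: rather than ``taking a minimising sequence of smooth curves'', you may simply integrate along the unique hyperbolic geodesic, since geodesics in $\D$ are explicit and realise $d_\D$; this avoids any limiting argument. Your remark about the rigidity step is accurate but not really an obstacle: after normalising so that $g(0)=0$, a single equality $\eta(0)=1$ means $|g'(0)|=1$, and the classical Schwarz lemma forces $g$ to be a rotation.
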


The following lemma is a direct consequence of Pick Theorem.
\begin{lemm}
    \label{lemm:uniform_contraction}
    Let $f_0, f_1, \ldots , f_n$ be holomorphic functions on a simply connected
    domain $D$ and continuous on $\overline{D}$ such that for every 
    $i \in \{ 0,1, \ldots , n\}$, $f_i(\overline{D}) \subset D$. Let 
    $(w_i)_{i \geq 0} \in \{ 1, 2, \ldots ,n \}^{\N_0}$ and consider the 
    sequence $\{ F_k \}_{k \geq 0}$ where \[F_k \= f_{w_0} \circ f_{w_1} \circ 
    \ldots \circ f_{w_k}.\] Then $\{ F_k \}_{k \geq 0}$ converges uniformly on 
    $D$ to a constant function. 
\end{lemm}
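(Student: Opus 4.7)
The plan is to apply Pick's theorem (Theorem 2.5) on a compact $f_i$-invariant subset of $D$ to get a uniform hyperbolic contraction, then iterate. First I would set $K \= \bigcup_{i=0}^{n} f_i(\overline{D})$; because each $f_i$ is continuous on the compact set $\overline{D}$ with $f_i(\overline{D}) \subset D$, the set $K$ is a compact subset of $D$, and by construction $f_i(K) \subset f_i(\overline{D}) \subset K$ for every $i$. Next I would check that no $f_i$ is a conformal isomorphism of $D$ onto itself: a biholomorphism $D \to D$ cannot send a sequence in $D$ approaching $\partial D$ into a compact subset of $D$, which would contradict $f_i(\overline{D}) \subset D$. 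Case (ii) of Pick's theorem applied to $f_i$ on the compact set $K$ then yields a constant $M_i < 1$, and setting $M \= \max_{0 \leq i \leq n} M_i < 1$ gives the uniform bound $d_D(f_i(z), f_i(w)) \leq M \, d_D(z,w)$ for all $i$ and all $z,w \in K$.

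I would then iterate this contraction. For any $k \geq 1$ and any $z \in \overline{D}$, the image $f_{w_k}(z)$ already lies in $K$, and the $f_i$-invariance of $K$ keeps all subsequent iterates there. Applying the uniform contraction $k$ times yields
\[ \diam_D(F_k(\overline{D})) \leq M^{k} \diam_D(K), \]
which tends to $0$ as $k \to \infty$. The sets $F_k(\overline{D})$ are also nested, since $F_{k+1}(z) = F_k(f_{w_{k+1}}(z))$ and $f_{w_{k+1}}(\overline{D}) \subset \overline{D}$, so by compactness their intersection is a single point $p \in K$.

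Finally, both $p$ and $F_k(z)$ lie in $F_k(\overline{D}) \subset K$ for every $z \in \overline{D}$ and every $k \geq 1$, hence $d_D(F_k(z), p) \leq M^k \diam_D(K) \to 0$ uniformly in $z$. Since $\lambda_D$ is bounded below on the compact set $K$, this hyperbolic estimate translates into uniform Euclidean convergence $F_k \to p$ on $\overline{D}$, and hence on $D$. I do not expect a substantial obstacle: the only step that needs care is ruling out alternative (i) of Pick's theorem for each $f_i$, and this is settled once and for all by the hypothesis $f_i(\overline{D}) \subset D$ via the boundary argument above.
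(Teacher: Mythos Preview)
Your proof is correct and follows essentially the same strategy as the paper: apply Pick's theorem on a compact subset of $D$ to get a uniform hyperbolic contraction factor $M<1$, iterate to show $\diam_D(F_k(\overline{D}))\to 0$, use the nesting $F_{k+1}(\overline{D})\subset F_k(\overline{D})$ to conclude the limit is a single point, and convert hyperbolic convergence to Euclidean on the compact set. The only organizational difference is that you work with the single $f_i$-invariant compact set $K=\bigcup_i f_i(\overline{D})$ and extract one constant $M_i$ per map, whereas the paper applies Pick's theorem to each $f_j$ on each set $f_i(\overline{D})$ separately, obtaining pairwise constants $M_{j,i}$; your packaging is slightly cleaner but the content is the same.
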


\begin{proof}
Since for every $i \in \{ 0,1, \ldots, n \}$ we have that 
$f_i(\ov{D}) \subset D $. By Pick Theorem, none of the $f_i$ can be a conformal
isomorphism. Moreover, for each $i,j$ in $\{ 0,1, \ldots, n \}$ there is a 
constant $M_{j,i} < 1$ such that for all $z,u \in D$ we have 
\begin{equation}
    \label{eq:first_inequality}
    d_D(f_j(f_i(z)),f_j(f_i(u))) \leq M_{j,i}d_D(z,w).
\end{equation}

Let $(w_i)_{i \geq 0} \ in \{ 0,1, \ldots, n \}^{\N_0}$. For every $k \geq 0$,
using \eqref{eq:first_inequality} inductively, we get that for all 
$z, u \in D$
\begin{align}
    \label{eq:second_inequality}
    d_D(F_{k+1}(z),F_{k+1}(u)) &= d_D(f_{w_0} \circ \ldots \circ f_{w_k}(z),
    f_{w_0} \circ \ldots \circ f_{w_k}(u)) \\ \nonumber
    &\leq M_{w_0w_1}d_D(f_{w_1} \circ \ldots \circ f_{w_k}(z), 
    f_{w_1} \circ \ldots \circ f_{w_k}(u))\\ \nonumber
    &\vdots \\\nonumber
    &\leq M_{w_0w_1}M_{w_1w_2} \ldots M_{w_{k-1}w_k}d_D(f_{w_{k+1}}(z),
    f_{w_{k+1}}(u)).
\end{align}
Put $M \= \max \{ M_{ji} \colon i,j \in \{ 0,1, \ldots, n \} \}$. Since 
$f_i(\ov{D})$ is compact for all $i\in \{ 0,1, \ldots, n \}$, we can set
\[\alpha \= \max\{ \diam_D(f_i(\ov{D})) \colon i\in \{ 0,1, \ldots, n \} \}.\] 
Then, for all $k \geq 0$ and all $z,u \in D$
\begin{equation}
    \label{eq:third_inequality}
    d_D(f_{w_{k+1}}(z), f_{w_{k+1}}(u)) \leq \alpha.
\end{equation}
Using \eqref{eq:second_inequality} and \eqref{eq:third_inequality}, we get
\begin{equation}
    \label{eq:fourth_inequality}
    \diam_D(F_{k+1}(D)) \leq \alpha \prod_{i=0}^{k-1}M_{w_iw_{i+1}} \leq 
    \alpha M^{k}.
\end{equation}
Then $\diam_D(F_{k}(D)) \longrightarrow 0$ as $k \rightarrow \infty$.

Also, for every integers $0 \leq k < n$ we have 
\[F_n(D) = F_k(f_{k+1} \circ \ldots f_n(D)) \subset F_k(D).\] This implies that 
for every $x \in D$
\[d_D(F_{n}(x), F_k(x)) \leq \diam_D(F_k(D)).\] Fix $\e > 0$. Letting $N \geq 0$
be large enough so that $\diam_D(F_k(D)) < \e$ for all $k \geq N$, we get 
that for every $x \in D$ and every $n \geq k \geq n \geq N$
\[d_D(F_n(x), F_k(x)) \leq \diam_D(F_k(D)) < \e.\] Thus, $\{ F_k\}_{k \geq 0}$
is uniformly Cauchy on $D$. In particular, $\{ F_k\}_{k \geq 0}$ converges
uniformly on $D$ to a holomorphic function $F$. 

Finally, observe that for every $z,u \in D$ and all $k \geq 0$, by 
\eqref{eq:euclidean_hyp_dist}, and \eqref{eq:hyp_metric_on_domains}
\begin{align*}
    |F(z) - F(u)| &= \lim_{k \to \infty}| F_k(z) - F_k(u) | \\
    &\leq \lim_{k \to \infty} \left( e^{d_D(F_k(z),F_k(u))} -1 \right) \\
    &\leq \lim_{k \to \infty} \left( e^{\diam_D(F_k(D))} -1 \right) \\
    &= 0.
\end{align*}
This implies that $F$ must be a constant function on $D$.

\end{proof}

For an interval $J \subset \R$
let $\C_J \= \C \setminus (\R \setminus J)$ denote the complex plane slit 
along the two rays that are the complement of $J$ in $\R$. 
A \emph{geodesic neighborhood} of $J$ of angle 
$\theta \in (0, \pi)$ is the union of two $\R-$symmetric segments
of euclidean disks based on $J$ and having angle $\theta$ with $\R$.
We will need the following version of the Schwarz Lemma (see 
\cite{Lyubich-Yamposlky97}):

\begin{lemm}[Schwarz Lemma]
    \label{lemm:Schwarz_Lemma}
    Let $J \subset \R$ and $J' \subset \R$ be two intervals. Let 
    $\phi \colon \C_J \longrightarrow\C_{J'}$ ba an analytic map such that 
    $\phi(J) \subset J'$. Then for any $\theta \in (0, \pi)$, we have 
    that $\phi(D_{\theta}(J)) \subset D_{\theta}(J')$.
\end{lemm}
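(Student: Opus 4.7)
My approach is to reduce to a normalized configuration via Möbius changes of coordinates and then apply the Schwarz-Pick inequality. First, choose real Möbius transformations $M_J$ and $M_{J'}$ that send the endpoints of $J$ and $J'$, respectively, to $0$ and $\infty$; they extend to conformal isomorphisms from $\C_J$ and $\C_{J'}$ onto $\Omega := \C \setminus (-\infty, 0]$, sending $J$ and $J'$ onto the positive real axis $(0,\infty)$. Because $M_J$ and $M_{J'}$ are real-linear-fractional, they preserve the angle a curve makes with $\R$ at real points, and so they send $D_\theta(J)$ and $D_\theta(J')$ to the same double sector
\[
\Sigma_\theta \;:=\; \{w \in \Omega : |\arg w| < \theta\}.
\]
The conjugate $\widetilde{\phi} := M_{J'} \circ \phi \circ M_J^{-1} : \Omega \to \Omega$ is analytic with $\widetilde\phi((0,\infty)) \subset (0,\infty)$, so the claim is equivalent to $\widetilde\phi(\Sigma_\theta) \subset \Sigma_\theta$.

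Next, the principal square root is a conformal isomorphism from $\Omega$ onto the right half-plane $\H_r := \{w \in \C : \operatorname{Re}(w) > 0\}$ that maps $(0,\infty)$ bijectively onto $(0,\infty)$ and sends the double sector $\Sigma_\theta$ onto the single sector $\Sigma'_{\theta/2} := \{w \in \H_r : |\arg w| < \theta/2\}$. Setting $\psi(w) := \sqrt{\widetilde\phi(w^2)}$ yields a holomorphic self-map of $\H_r$ that sends $(0,\infty)$ into itself, and it suffices to prove $\psi(\Sigma'_{\theta/2}) \subset \Sigma'_{\theta/2}$.

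For this last step, identify $\H_r$ with the upper half-plane $\H$ via $w \mapsto iw$; the ray $(0,\infty)$ becomes the positive imaginary axis, which is a vertical geodesic of the standard hyperbolic metric on $\H$. A short polar-coordinate computation shows that the hyperbolic distance from $re^{i\phi} \in \H$ to the positive imaginary axis equals $|\log \tan(\phi/2)|$, depending only on $\phi$. Hence the rays through the origin are exactly the equidistant curves of this geodesic, and $\Sigma'_{\theta/2}$ is precisely its hyperbolic tubular neighborhood of radius $r(\theta) := \log \tan(\tfrac{\theta}{4} + \tfrac{\pi}{4})$. By the Schwarz-Pick theorem (Theorem \ref{theo:Pick_theorem}), $\psi$ is non-expanding in the hyperbolic metric of $\H_r$; combined with the fact that $\psi$ maps the geodesic $(0,\infty)$ into itself, this gives $\psi(\Sigma'_{\theta/2}) \subset \Sigma'_{\theta/2}$. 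Undoing the square root and the Möbius normalizations yields $\phi(D_\theta(J)) \subset D_\theta(J')$. The heart of the argument is recognizing the Euclidean-looking region $D_\theta(J)$ as an intrinsic hyperbolic tubular neighborhood after the $\sqrt{\cdot}$ uniformization; once this is in place, Pick supplies the rest.
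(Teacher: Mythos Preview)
The paper does not supply its own proof of this lemma; it simply cites an external reference (Lyubich--Yampolsky) and uses the result as a black box. Your argument is correct and is in fact the standard proof: after the M\"obius normalization and the square-root uniformization $\Omega \to \H_r$, the Poincar\'e neighborhood $D_\theta(J)$ becomes a hyperbolic tube around the geodesic $(0,\infty)$, and Schwarz--Pick (Theorem~\ref{theo:Pick_theorem}) forces any holomorphic self-map preserving that geodesic to preserve its tubes. The only point worth tightening is a bookkeeping issue: depending on whether $J$ is taken open or closed, $M_J(\C_J)$ may differ from $\Omega=\C\setminus(-\infty,0]$ by the two boundary points $0,\infty$; this is harmless for the hyperbolic-metric argument but deserves one sentence of acknowledgment.
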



\section{The unimodal case} 
\label{section:unimodal_case}

In view of Theorem \ref{theo:Milnor_Thurston_kneading_determinant_zeta_function_theorem}, we have that Theorem \ref{theo:Theorem_1} is a direct
consequence of the following proposition

\begin{prop}
\label{prop:Proposition_1}
Let $I \subset \R$ be a compact interval and let 
$f \colon I \longrightarrow I$ be a unimodal map with all but finitely many
of its periodic points repelling. 
Then, the kneading determinant of $f$ is a rational
function if and only if the turning point of $f$ is asymptotic to a 
periodic cycle of $f$.
\end{prop}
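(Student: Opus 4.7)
The plan is to reduce rationality of the kneading determinant $D(t)$ to eventual periodicity of the kneading sequence $\bm{k} = (A_n(f(c)))_{n \geq 0}$, and then convert this combinatorial condition into the desired dynamical statement. Two ingredients are central: an explicit description of $D(t)$ in the unimodal case showing its coefficients lie in a finite set, and Proposition \ref{coro:transcendence_criteria}, which yields eventual periodicity of such finite-coefficient power series whenever they are rational.

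I would first specialize \eqref{eq:kneading_determinant_formula} to $m = 1$. The kneading matrix is $1 \times 2$, say $[N_0(t),N_1(t)]$, read off from the increment $\nu = \theta(c^+) - \theta(c^-)$. Writing out $\theta_n(c^\pm)$ explicitly and using $\e_0(c^+) - \e_0(c^-) = \pm 2$ together with $\e_j(c^+) = \e_j(c^-)$ for $j \geq 1$, one sees that $N_1(t) \in \Z[[t]]$ has $n$th coefficient $-2\chi_n$ at positions $n \geq 1$ with $A_n(c) = I_1$ and zero at other $n \geq 1$, where $\chi_n = \prod_{j=1}^{n-1}\e_j(c)$. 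A brief parity check shows that successive nonzero contributions alternate in sign, so the coefficients of $D(t) = -N_1(t)/(1-t)$ lie in $\{-1,+1\}$ and record the parity of the number of visits of the orbit of $c$ to the right-monotone branch. Consequently the coefficient sequence of $D(t)$ is eventually periodic if and only if $\bm{k}$ is.

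For the easy direction, suppose $c$ is asymptotic to a periodic cycle $\cO$ of period $n$; apart from the trivial case of finite orbit, for all large $M$ the point $f^M(c)$ lies in a small neighborhood of $\cO$ inside the open monotonicity intervals, so the itinerary of $f^M(c)$ eventually agrees with that of a point of $\cO$, making $\bm{k}$ eventually periodic. Conversely, assume $D(t)$ is rational; then Proposition \ref{coro:transcendence_criteria}, applied to the finite-coefficient series $D(t)$, yields eventual periodicity of its coefficients, hence of $\bm{k}$. Say $\bm{k}$ has preperiod $q$ and period $p$ and set $y_j = f^{q+j}(c)$, so $y_1$ and $y_{p+1}$ share their itinerary. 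Either $y_1 = y_{p+1}$, in which case $c$ is preperiodic and we are done, or $J = \langle y_1, y_{p+1}\rangle$ is a nondegenerate interval on which every point carries this common itinerary; in particular $f^i(J)$ avoids $c$ in its interior for $0 \leq i < p$, so $f^p|_J$ is monotone. Depending on orientation, either $f^p|_J$ or $f^{2p}|_J$ sends $J$ monotonically into itself, so the subsequence $(y_{kp+1})_k$ is monotone in $J$ and converges to a fixed point $x^*$ of $f^p$ (resp.\ $f^{2p}$) in $\overline{J}$. Hence $c$ is asymptotic to the $f$-orbit of $x^*$.

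The step I expect to need the most care is this final dynamical conclusion, passing from the monotone orbit $(y_{kp+1})$ to a genuine periodic attractor for $c$: this is where the hypothesis that all but finitely many periodic points of $f$ are repelling enters, ruling out situations in which the limit $x^*$ lies in a degenerate interval of fixed points of $f^p$. The combinatorial reduction in the second paragraph amounts to symbolic bookkeeping, and I treat Proposition \ref{coro:transcendence_criteria} as a black box supplying the key transcendence input.
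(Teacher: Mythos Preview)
Your proposal is correct and follows essentially the same route as the paper: both arguments rest on (i) the explicit unimodal form $D_f(t)=1+\e_1 t+\e_1\e_2 t^2+\cdots$ with coefficients in $\{\pm 1\}$, (ii) Proposition~\ref{coro:transcendence_criteria} to pass from rationality to eventual periodicity of the coefficient sequence, and (iii) the equivalence between eventual periodicity of the itinerary and the turning point being asymptotic to a periodic orbit. The only notable difference is in step (iii): the paper invokes \cite[Lemma~2.1]{MiTh88} and simply stops after showing $(\e_i)_{i\ge 1}$ is eventually periodic, whereas you supply the standard monotone--lap argument directly (and correctly identify it as the place where some care is needed). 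Your remark about the finiteness hypothesis on non-repelling periodic points is slightly off, however: your interval argument already produces a periodic limit $x^*$ without that hypothesis, so for Proposition~\ref{prop:Proposition_1} itself the assumption is not really used---it enters only when one passes to the Artin--Mazur zeta function via Theorem~\ref{theo:Milnor_Thurston_kneading_determinant_zeta_function_theorem}.
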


In order to prove Proposition \ref{prop:Proposition_1},we will need to 
understand the kneading determinant, introduced in Section 
\ref{subsec_multimodal_Kneading_thoery}, as a formal power series and as a 
dynamical object determine by the orbit of the unique turning point.

\subsection{The kneading determinant of unimodal maps}
In the case of unimodal maps, the kneading determinants become simpler.

Let $I \subset \R$ be a compact interval and
let $f \colon I \longrightarrow I$ be a unimodal map with turning point
$c$. Following Section \ref{subsec_multimodal_Kneading_thoery}, we have 
that
\[ I_0 \= \{ x \in I \colon x<c \} \hspace{1cm} \text{ and } \hspace{1cm} 
 I_1 \= \{ x \in I \colon x > c \}. \] 
 For $n \geq 1$ we put $\e_{n} \= \e(A_n(c))$ if $f^n(c) \neq c$, and 
 $\e_n = \e_1 \e_2 \ldots \e_{n-1}$ if $f^n(c) = c$. Then, the kneading 
 determinant of $f$ is the power series 
 \[ D_f(t) \= 1 + \e_1t + \e_1\e_2t^2 + \e_1\e_2\e_3t^3 + \ldots  \]

\begin{rema}
    This definition of the kneading determinant for unimodal maps coincide
    with the general definition given in Section \ref{subsec_multimodal_Kneading_thoery}, 
    see Lemma 4.5 in \cite{MiTh88}.
\end{rema}

Now, we will study the kneading determinant as a formal power series. For
that purpose, we will need the following general proposition.

Given a formal power series $f(x) = \sum_{i \geq 0}a_ix^i \in \Z[[x]]$, and a
prime number $p$, we define the \emph{reduction modulo $p$ of $f(x)$} 
as the formal power
series \[ [f(x)]_p \= \sum_{i \geq 0} (a_i \mod{p}) x^i \in \F_{p}[[x]]. \]
\begin{prop}
\label{coro:transcendence_criteria}
If the coefficients of $f(x) \in \Z[[x]]$ belong to a finite set, then
either $f(x) \in \Q(x)$ or $f(x)$ is transcendental over $\Q(x)$. In the 
former case, the coefficients of $f(x)$ form an eventually periodic sequence.
\end{prop}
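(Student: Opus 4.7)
The plan is to deduce rationality from algebraicity by passing through reductions modulo primes, invoking Christol's Theorem to obtain $p$-automaticity for two different primes, and then applying Cobham's Theorem 2. Let $S \subset \Z$ be a finite set containing every coefficient of $f(x) = \sum_{n \geq 0} a_n x^n$.

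First I would assume $f(x)$ is algebraic over $\Q(x)$, and clear denominators to obtain a nontrivial polynomial relation $P(f(x), x) = 0$ with $P \in \Z[X, x]$. For all but finitely many primes $p$ the reduction $[P]_p \in \F_p[X, x]$ is nonzero and has the same degree in $X$ as $P$; for such $p$ the reduced series $[f(x)]_p \in \F_p[[x]]$ is algebraic over $\F_p(x)$. Enlarging $p$ further if necessary, the reduction map $S \to \F_p$, $a \mapsto a \bmod p$, is injective. Christol's Theorem, applied with $A = S$ and $\rho$ the reduction map (regarded as taking values in $\F_{p^1} = \F_p$), then shows that the coefficient sequence $(a_n)_{n \geq 0}$, viewed as a sequence over the finite alphabet $S$, is $p$-automatic.

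Next I would fix two distinct primes $p \neq q$, both large enough for the previous step to apply. Distinct primes are multiplicatively independent, so Cobham's Theorem 2 forces $(a_n)_{n \geq 0}$ to be eventually periodic. An eventually periodic integer sequence is the coefficient sequence of a rational function, since $\sum_{n \geq N} a_{N + n} x^{N+n} = x^N \bigl(\sum_{m = 0}^{T-1} a_{N+m} x^m\bigr)/(1 - x^T)$ once $a_{n+T} = a_n$ for $n \geq N$; hence $f(x) \in \Q(x)$, which proves the dichotomy. Since every rational function is in particular algebraic over $\Q(x)$, the same chain of implications also yields the ``former case'' clause: whenever $f(x) \in \Q(x)$ with coefficients in a finite subset of $\Z$, its coefficients must be eventually periodic.

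The main obstacle is purely bookkeeping, namely choosing $p$ large enough that simultaneously the reduction $[P]_p$ remains a nontrivial algebraic relation for $[f(x)]_p$ and the reduction $S \to \F_p$ is injective; both conditions hold outside a finite exceptional set of primes, so two such primes can always be selected. Once this is arranged, the argument is simply the pipeline Christol $\Rightarrow$ $p$-automatic $+$ $q$-automatic $\Rightarrow$ Cobham $\Rightarrow$ eventual periodicity $\Rightarrow$ rationality.
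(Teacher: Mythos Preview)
Your proposal is correct and follows essentially the same approach as the paper: assume algebraicity, clear denominators to an integral relation, reduce modulo sufficiently many primes to get algebraicity over $\F_p(x)$, invoke Christol to obtain $p$-automaticity for two distinct primes, and conclude eventual periodicity (hence rationality) via Cobham. Your treatment is in fact slightly more careful than the paper's in explicitly requiring the reduction $S \to \F_p$ to be injective (the paper only asks that $p$ not divide the elements of $S$, which does not literally guarantee injectivity), and in separately noting that the ``former case'' clause follows from the same argument.
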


\begin{proof}
    Let $f(x) \in \Z[[x]]$. We need to prove that if $f(x)$ is algebraic over 
    $\Q(x)$, then $f(x) \in \Q(x)$. Suppose that $f(x)$ is algebraic over 
    $\Q(x)$. First, we will prove that there are 
    $G_0(x), G_1(x), \ldots, G_d(x) \in \Z[x]$, not all identically zero,
    such that 
    \[G_0(x) + G_1(x)f(x) + \ldots + G_d(x)(f(x))^d = 0.\] Since $f(x)$ is
    algebraic over $\Q(x)$, there exist $R_0(x), \ldots , R_d(x) \in \Q(x)$,
    not all identically zero,
    such that \[ \prod_{i=0}^d R_i(x) (f(x))^i = 0.\] Write 
    $R_i(x) = \frac{P_i(x)}{Q_i(x)}$, with 
    \[P_i(x) = a_{i,0} + a_{i,1}x + \ldots a_{i,m_i}x^{m_i} 
    \hspace{0.3cm} \text{ and } \hspace{0.3cm} 
    Q_i(x) = b_{i,0} + b_{i,1}x + \ldots b_{i,n_i}x^{n_i},\] in $\Z[x]$.
    If $R_i(x)$ is the constant polynomial equal to zero, we put $P_i(x) = 0$
    and $Q_i(x) = 1$.
    Put \[ H(x) \= \prod_{i=0}^d Q_i(x)
    \hspace{0.4 cm} \text{ and } \hspace{0.4cm} 
    G_i(x) \= H(x) R_i(x), \] for all $i = 1,2, \ldots, d$
    and note that $G_i(x) \in \Z[x]$, they are not all identically zero, as 
    if $G_i(x) \neq 0$, then $K(x)G_i(x)$ is nonzero, and 
    \begin{align*}
        G_0(x) + G_1(x)f(x) + \ldots + G_d(x)(f(x))^d
        &= H (x) \prod_{i=0}^d R_i(x) (f(x))^i  \\
        &= 0.
    \end{align*}

    Denote by $C$ the set of all coefficients of the polynomials 
    $G_0(x), \ldots, G_d(x)$ together with the coefficients of $f(x)$. Note 
    that $C$ is a finite set of integers, then we can find infinitely many
    prime numbers not dividing any element of $C$. For any of these prime
    numbers, we have that $[f(x)]_p$ is not identically zero, and for any 
    $G_i(x)$ not identically zero, $[G_i(x)]_p$ is not identically zero.
    In particular
    \[ [G_0(x)]_p + [G_1(x)]_p[f(x)]_p + \ldots [G_d(x)]_p ([f(x)]_p)^d 
    = 0, \] with not all of the $[G_i(x)]_p$ identically zero.
    So, $[f(x)]_p$ is algebraic over $\F_p(x)$. In particular, 
    by Christol's Theorem, 
    we can find two primes $p_1 \neq p_2$ such that the sequence 
    of coefficients of $f(x)$ is $p_1$ and $p_2$-automatic. By Conham's 
    Theorem, we conclude that the sequence of coefficients of $f(x)$ form 
    an eventually periodic sequence.
\end{proof}

\subsection{Proof of Proposition \ref{prop:Proposition_1}}

\begin{proof}[Proof of Proposition \ref{prop:Proposition_1}]
Let $f \colon I \longrightarrow I$ be a unimodal map with turning point $c$.
For $i, k \in \N$ with $i \leq k$, put \[\bf{e}_{i,k} \= \prod_{j=i}^k \e_j.\]
Observe that, for every $p \geq 1$ we can write
\[D_f(t) = 1 + \bf{e} _{1,1}t + \ldots + \bf{e}_{1,p-2}t^{p-2} +
\bf{e}_{1,p-1}t^{p-1} (1 + \bf{e}_{p,p}t + \bf{e}_{p,p+1}t^2 + \ldots).\]
If $c$ is asymptotic to a periodic cycle, by \cite[Lemma 2.1]{MiTh88},
its itinerary is eventually periodic. Thus, there are integers $p \geq 0$ and 
$k \geq 1$ such that for every $n \geq 0$ 
\begin{equation}
    \label{eq:periodic_itinerary_unimodal}
    \e_{p+kn} = \e_p.
\end{equation}

By \eqref{eq:periodic_itinerary_unimodal}, the sequence  
$\{ \e_{p+i} \}_{i \geq 0}$ is periodic of period $k$, and for every 
$0 \leq j \leq k-1$ we have 
\[ \bf{e}_{p,p+k+j} = (\bf{e}_{p,p+j})^2 \bf{e}_{p+j+1,p+k-1},\] and
in particular, in the case when $j = k-1$, we have
\[ \bf{e}_{p,p+2k-1} = 1. \] This proves that for every $j \in \N_0$ we have
\[ \bf{e}_{p,p+2k+j} = \bf{e}_{p,p+j}. \] So, the sequence 
$ 1, \bf{e}_{p,p}, \bf{e}_{p,p+1}, \ldots, \bf{e}_{p,p+j}, \ldots $ is
periodic of period (not necessarily minimal) $2k$. Then
\[D_f(t) = 1 + \bf{e}_{1,1}t + \ldots + \bf{e}_{1,p-2}t^{p-2} +
\bf{e}_{1,p-1}t^{p-1} \left( \frac{1 + \bf{e}_{p,p }t + \ldots + 
\bf{e}_{p,p+2k-2} t^{2k-1}}{1 - t^{2k}} \right).\]
So, the kneading determinant of $f$ is a rational function. 

Suppose now that $D_{f}(t)$ is a rational function.
By Corollary \ref{coro:transcendence_criteria}, the coefficients of $D_f(t)$
form an eventually periodic sequence. Thus, there are $n_0 \geq 0$ and 
$p \geq 1$ such that for every $0 \leq j < p$ and every $m \geq 0$ we have 
$\bf{e}_{1,n_0+j} = \bf{e}_{1,n_0+j+mp}$. 
It follows that 
\begin{equation*}
    \e_{n_0+j+1} = \frac{\bf{e}_{1,n_0+j+1}}{\bf{e}_{1,n_0+j}} \\
    = \frac{\bf{e}_{1,n_0+j+1+p}}{\bf{e}_{1,n_0+j+p}} \\
    = \e_{n_0+j+p+1}.
\end{equation*}
So the sequence $\{ \e_i \}_{i \geq 1}$ is eventually periodic.

\end{proof}


\section{The Fibonacci shift} \label{sec:Example}

In this section, we will use the same definitions as in Section 
\ref{subsec:automatic_sequences}. Let $\cA$ be a finite set (\emph{alphabet}),
we will introduce a metric structure (then a topology) in the space 
$\cA^{\N_0}$, together with a transformation that will define a dynamical 
systems in our topological space.

Given $\bm{w} = (w_i)_{i \geq 0} \in \cA^{\N_0}$, and integers 
$0 \leq k \leq n$ we denote by 
$\bm{w}|_{[k,n]}$ the subword of $\bm{w}$ from $k$ to $n$, thus
\[\bm{w}|_{[k,n]} = w_kw_{k+1}\ldots w_{n}.\] In particular 
$\bm{w}|_{\{i\}} = w_i$, for every $i \in \N_0$. Two words 
$w_0w_1 \ldots w_n$ and $u_0u_1\ldots u_n$ in $\cA^{n+1}$ are different if 
there
is a $0 \leq i \leq n$ such that $w_i \neq u_i$. In the same way, we say that 
two sequences $\bm{w}$ and $\bm{u}$ in $\cA^{\N_0}$ are different if there
is an $i \geq 0$ such that $\bm{w}|_{\{i\}} \neq \bm{u}|_{\{i\}}$.

\begin{defi}
    The $\N_0-$\emph{full-shift $\cA^{\N_0}$} is the set of all infinite 
    sequences (\emph{configurations}) $\bm{w} = (w_i)_{i \geq 0}$, with the 
    prodiscrete topology, which can be described via the metric
    \[d_s(\bm{w}, \bm{u}) \= \sup(\{ 2^{-i} \colon \bm{w}|_{\{i\}} =
    \bm{u}|_{\{i\}}, i \in \N_0 \} \cup \{ 0 \}).\]
\end{defi}

Given a word $a_0 a_1\ldots a_n \in \cA^*$ and $k \in \N_0$ we define the
\emph{cylinder associated with this word at $k$} as the set 
\[[a_0a_1 \ldots a_n]_k \= \{ \bm{w} \in \cA^{\N_0} \colon \bm{w}|_{[k,k+n]}
= a_0a_1 \ldots a_n \}.\]
The collection of cylinder form a basis for the prodiscrete topology in 
$\cA^{\N_0}$.

With this topology, $\cA^{\N_0}$ is a Cantor set. Thus, $\cA^{\N_0}$ is 
compact, totally disconnected (the connected components are points), and 
perfect (has no isolated points.)

We define the shift map $\sigma \colon \cA^{\N_0} \longrightarrow \cA^{\N_0}$,
by $\sigma(\bm{w})|_{\{i\}} = \bm{w}|_{\{i+1\}}$ for all 
$\bm{w} \in \cA^{\N_0}$ and all $i \geq 0$. Thus, $\sigma (\bm{w})$ is the 
sequence resulting from deleting the first symbol of $\bm{w}$.

\begin{defi}
    A subshift consist of the pair $(X, \sigma)$ where $X$ is a closed $\sigma-$invariant subset of $\cA^{\N_0}$.
\end{defi}
We denote the subshift $(X, \sigma)$ by the set $X$. 
The \emph{Fibonacci shift} is given by 
\[ \SFib \= \{ \bm{w} \in \{ 1,2 \}^{\N_0} \colon \bm{w}|_{[i,i+1]}
\neq 11 \, \text{ for all } i \geq 0 \}. \] This subshift is the
collection of all infinite words in $\{1,2\}^{\N_0}$ that does not 
contain two consecutive $1'$s.




A finite directed graph $G$ consists of a finite collection of vertices 
$V(G) = \{ v_1,v_2, \ldots , v_k \}$ together with a finite collection of
directed edges $E(G) \subset V(G) \times V(G)$. Each edge 
$e = (v_i, v_j) \in E(G)$ is directed in the sense that has
an initial vertex $\text{in}(e) = v_i$ and terminal vertex $\text{ter}(e)=v_j$.

Let $G$ be a finite directed graph with $k$ labeled vertices. We define the 
$k \times k$ \emph{adjacency matrix} $A_G = (A_{i,j})$ of $G$ where 
$A_{i,j}$ is the number of directed edges with initial vertex $v_i$ and 
terminal vertex $v_j$.

Given a graph $G$, we define the \emph{edge shift} in the following way:
Label each edge in $G$ and consider as alphabet the set of labels $\cA_G$,
then
\[X_G \= \{ \bm{w} \in \cA_G^{\N_0} \colon \text{ter}(\bm{w}|_{ \{ i\}})
= \text{in}(\bm{w}|_{\{ i+1 \}}) \}.\]

The finite directed graph $G$ given in Figure \ref{fig:Fibonnaci_graph}
is so that $X_G = \SFib$.

Having this representation of SFT by finite directed graph has the utility that
one can work with the adjacency matrix. For instance, we have the following
proposition (see \cite[Proposition 2.2.12]{Lind-Marcus2021}). 

\begin{prop}
    \label{prop:periodic_pts_SFT}
    Let $A$ be a $k \times k$ matrix over $\N_0$, and let $m \geq 1$. The
    number of paths in $G_A$ of length $m$ from the vertex $v_i$ to the vertex
    $v_j$ is given by $(A^m)_{i,j}$. In particular, the number of periodic
    points of period $m$ of $(X_{G_A}, \sigma)$ is equal to 
    $\Trace (A^m)$.
\end{prop}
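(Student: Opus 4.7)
The plan is to prove the two claims in sequence, the first by a standard induction on $m$, and the second by setting up a bijection between periodic points of period $m$ in $X_{G_A}$ and closed walks of length $m$ in $G_A$.

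For the first claim, I would proceed by induction on $m$. The base case $m=1$ is immediate: by definition, $A_{i,j}$ counts the number of edges from $v_i$ to $v_j$, which is exactly the number of paths of length $1$. For the inductive step, I would use the matrix multiplication formula
\[
(A^{m+1})_{i,j} = \sum_{\ell=1}^{k} (A^{m})_{i,\ell}\, A_{\ell,j}.
\]
Any path of length $m+1$ from $v_i$ to $v_j$ is uniquely decomposed as a path of length $m$ from $v_i$ to some intermediate vertex $v_\ell$ followed by a single edge from $v_\ell$ to $v_j$. By the inductive hypothesis the first factor counts the number of such initial paths, and the second factor counts the possible closing edges; summing over $\ell$ gives the claim.

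For the second claim, I would observe that a point $\bm{w}\in X_{G_A}$ satisfies $\sigma^{m}(\bm{w})=\bm{w}$ if and only if $\bm{w}=\overline{w_0w_1\ldots w_{m-1}}$ for a word $w_0w_1\ldots w_{m-1}\in \cA_G^{m}$. Because $\bm{w}\in X_{G_A}$, the consecutive-edge compatibility $\text{ter}(w_r)=\text{in}(w_{r+1})$ holds for $0\le r\le m-2$, and because the sequence repeats, the closing compatibility $\text{ter}(w_{m-1})=\text{in}(w_0)$ holds as well. Thus the assignment $\bm{w}\mapsto (w_0,\ldots,w_{m-1})$ gives a bijection between periodic points of period $m$ in $X_{G_A}$ and closed walks of length $m$ in $G_A$. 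Partitioning these closed walks by their initial (equivalently, terminal) vertex $v_i$ and invoking the first part gives that the total count equals $\sum_{i=1}^k (A^{m})_{i,i} = \Trace(A^m)$.

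I do not expect a genuine obstacle here; both parts are standard. The only point requiring minor care is to confirm that a periodic point $\bm{w}$ of period $m$ is exactly encoded by an $m$-tuple of edges satisfying the cyclic compatibility condition, so that no overcounting or undercounting occurs relative to closed walks in $G_A$. Once this correspondence is in place, the trace formula follows directly from the first statement.
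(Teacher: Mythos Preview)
Your argument is correct and is the standard proof of this fact. The paper does not give its own proof of this proposition but simply cites \cite[Proposition 2.2.12]{Lind-Marcus2021}; your induction on $m$ for the path-counting claim and the bijection between $\sigma^m$-fixed points and closed walks of length $m$ is exactly the argument one finds there.
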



We include the computation of the zeta function for the Fibonacci shift. 
Compare with the Bowen-Lanford formula \cite{Bowen_Lanford_Zeta_function_of_subshift}.

\begin{figure}
\centering
    \begin{tikzpicture}  
    \node[state] (a) at (0,0) {$A$}; 
    \node[state] (b) [right =of a] {$B$};  
    \path[->] (a) edge[bend left=60, "2"] (b); 
    \path[->] (b) edge[bend left=60, "1"] (a);
    \draw[->] (a) to [out=270,in=180,looseness=5, "2"] (a);
      \end{tikzpicture}  
      \caption{Edge labeled graph for the Fibonacci SFT.}
      \label{fig:Fibonnaci_graph}
\end{figure}
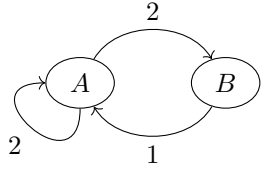

\begin{lemm}
    \label{lemm:A-M_zeta_function_of_the_SFT}
    For the left shift $ \sigma \colon \SFib \longrightarrow \SFib $, we have
    \[ \zeta_{\sigma} (t) = \frac{1}{1-t-t^2}. \]
\end{lemm}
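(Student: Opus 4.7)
The plan is to reduce the computation to a standard matrix-trace computation via Proposition \ref{prop:periodic_pts_SFT}. First I would read off the adjacency matrix of the edge-labeled graph in Figure \ref{fig:Fibonnaci_graph}: from vertex $A$ there is a self-loop (edge labeled $2$) and an edge to $B$ (labeled $2$), while from $B$ there is only one edge, going back to $A$ (labeled $1$). Hence the edge shift $X_G$ coincides with $\SFib$ and the adjacency matrix is
\[
A \;=\; \begin{pmatrix} 1 & 1 \\ 1 & 0 \end{pmatrix}.
\]
By Proposition \ref{prop:periodic_pts_SFT}, the number of periodic points of period $n$ of $(\SFib,\sigma)$ equals $\Trace(A^n)$.

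Next I would diagonalize $A$ (or work directly with its characteristic polynomial $\lambda^{2}-\lambda-1$). Its eigenvalues are the golden ratio $\phi=(1+\sqrt{5})/2$ and its conjugate $\psi=(1-\sqrt{5})/2$, satisfying $\phi+\psi=1$ and $\phi\psi=-1$. Therefore $\Trace(A^n)=\phi^{n}+\psi^{n}$ for every $n\geq 1$, and the defining series of the Artin--Mazur zeta function becomes
\[
\log \zeta_{\sigma}(t) \;=\; \sum_{n\geq 1}\frac{\Trace(A^n)}{n}t^{n} \;=\; \sum_{n\geq 1}\frac{(\phi t)^{n}}{n}+\sum_{n\geq 1}\frac{(\psi t)^{n}}{n}.
\]

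Then I would recognize each of these sums as $-\log(1-\phi t)$ and $-\log(1-\psi t)$ in $\Q[[t]]$ (formal logarithm identity for $-\log(1-u)=\sum_{n\geq 1}u^{n}/n$), so that
\[
\log \zeta_{\sigma}(t) \;=\; -\log\bigl((1-\phi t)(1-\psi t)\bigr) \;=\; -\log\bigl(1-(\phi+\psi)t+\phi\psi\, t^{2}\bigr) \;=\; -\log(1-t-t^{2}).
\]
Exponentiating yields the claimed identity $\zeta_{\sigma}(t)=1/(1-t-t^{2})$. Equivalently, one may observe $\det(I-tA)=1-t-t^{2}$ and invoke the standard Bowen--Lanford formula $\zeta_{\sigma}(t)=1/\det(I-tA)$ for SFTs, but since the paper has not formally introduced that identity I prefer the direct trace/logarithm computation above.

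No step is really hard here; the only point requiring a little care is to verify, from the picture, that the adjacency matrix I have written down is correct (in particular that the two edges emanating from $A$ and the single edge from $B$ produce exactly the length-$n$ words in $\{1,2\}^{n}$ with no two consecutive $1$'s), so that Proposition \ref{prop:periodic_pts_SFT} applies. After that, everything is a formal manipulation in $\Q[[t]]$.
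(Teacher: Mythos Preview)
Your proof is correct. Both you and the paper start by identifying $\SFib$ with the edge shift of the graph in Figure~\ref{fig:Fibonnaci_graph} and invoke Proposition~\ref{prop:periodic_pts_SFT} to get $N_n(\sigma)=\Trace(A_G^n)$, but then diverge in how the trace series is summed. The paper writes the adjacency matrix as $\bigl(\begin{smallmatrix}0&1\\1&1\end{smallmatrix}\bigr)$ (the vertex-relabeled version of yours, hence similar, with the same characteristic polynomial and trace powers), computes $A_G^n$ inductively in terms of the Fibonacci numbers $\ell_k$, obtains $N_n(\sigma)=\ell_{n-1}+\ell_{n+1}$, and then takes the logarithmic derivative of $\zeta_\sigma$ and appeals to the closed form $\sum_{n\geq 1}\ell_n t^n = t/(1-t-t^2)$ for the Fibonacci generating function. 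Your route---diagonalizing via the roots $\phi,\psi$ of $\lambda^2-\lambda-1$ and summing two formal logarithms---is shorter and is essentially the derivation of the Bowen--Lanford identity $\zeta_\sigma(t)=1/\det(I-tA_G)$ specialized to this matrix; it avoids both the inductive computation of $A_G^n$ and the Fibonacci generating function. The paper's argument, by contrast, makes the Fibonacci structure of the periodic-point counts explicit, which fits the name of the shift but is not needed for the bare statement of the lemma.
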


\begin{proof}
    For the graph $G$ in Figure \ref{fig:Fibonnaci_graph} we have 
    $X_G = \SFib$. Then, by Proposition \ref{prop:periodic_pts_SFT}, 
    $N_n(\sigma|_{\SFib}) = \Trace (A_G^n)$.
    From Figure \ref{fig:Fibonnaci_graph}, we see that  
    \[ A_G \= 
    \begin{pmatrix}
        0 & 1 \\
        1 & 1
    \end{pmatrix}. \]
    Then we can write
    \[ \SFib = \{ \bm{w} = (w_i)_{\geq 0} \in \{1,2\}^{\N_0} \colon 
    A_{w_iw_{i+1}} = 1, \forall i \in \N_0 \}. \]
    With this description of $\SFib$, it can be proved inductively that for 
    $n \geq 1$
    \[ A^n \= 
    \begin{pmatrix}
        \ell_{n-1} & \ell_{n} \\
        \ell_{n} & \ell_{n+1}
    \end{pmatrix}, \]
    where $ \{ \ell_k \}_{k\geq 0}$ is the Fibonacci sequences. Thus, 
    $\ell_0 = 0$, $\ell_1 = 1$, and for $k \geq 2$, $\ell_k = \ell_{k-1} + 
    \ell_{k-2}$.
     Then $N_n(\sigma) = \ell_{n-1} + \ell_{n+1}$, and 
    \[ \zeta_{\sigma}(t) = \exp \left( \sum_{n\geq 1} (\ell_{n-1} + \ell_{n+1}) \frac{1}{n}t^n \right). \] So we compute the logarithmic 
    derivative

    \begin{align*}
        \frac{\zeta'_{\sigma}(t)}{\zeta_{\sigma}(t)} &= 
        \sum_{n \geq 1} (\ell_{n-1}+\ell_{n+1})t^{n-1}\\
        &= - \frac{1}{t} + (1 + \frac{1}{t^2})\sum_{n \geq 1} \ell_n t^n \\
        &= -\frac{1}{t} + (1 + \frac{1}{t^2})\frac{t}{1-t-t^2}.
    \end{align*}
    To get the last equality, we use the rational representation of the 
    generation function of the Fibonacci sequence.
    From the above we get \[ \frac{\zeta_{\sigma}'(t)}{\zeta_{\sigma}(t)} = 
    \frac{2t+1}{1-t-t^2},\] and finally
    \[ \ln (\zeta_{\sigma}(t)) = - \ln(1-t-t^2). \]
    This last equality gives us the desired formula.
\end{proof}


\section{Virtually unimodal maps}
\label{section_virtually_unimodal_maps} 

In this section, we introduce the notion of ``virtually unimodal maps", 
study their Artin-Mazur zeta function, and characterize their combinatorics 
in the case they have finite turning point orbits.

\begin{defi}
    \label{def:VUM}
    A multimodal map $f$ is called \emph{virtually unimodal} (VU) if there is
    a turning point $c$ of $f$ such that the following holds:
    \vspace{0.2cm}
    \begin{enumerate}
        \item[(VU1)] 
        $\Trn (f) \cap \interior (\langle f^2(c), f(c) \rangle) = \{ c \}$,
        \vspace{0.3cm}
        \item[(VU2)] $\cO_f(c) \subset \langle f^2(c),f(c) \rangle$, and
        \vspace{0.3cm}
        \item[(VU3)] there exists a $k \geq 0$ such that  $f^k(\Trn (f)) 
        \subset \langle f^2(c) , f(c) \rangle$.
    \end{enumerate}
    We call $c \in \Trn (f)$ the \emph{dominant turning point} of $f$.
\end{defi}

Conditions (VU1) and (VU2) imply that $f|_{\langle f^2(c),f(c) \rangle}$ is
unimodal with either $f^2(c) < c < f(c)$ or $f(c) < c < f^2(c)$. This 
implies that
\[f(\langle f^2(c),f(c) \rangle) = \langle f^2(c),f(c) \rangle.\]
Conditions (VU1) and
(VU3) do not rule out the possibility that  $f^2(c)$ or $f(c)$ are turning
points of $f$.

\begin{theo}
    \label{theo:VUM_A-M_zeta_function}
    Let $I \subset \R$ be a compact interval, and 
    $f \colon I \longrightarrow I$ a virtually unimodal map with dominant 
    turning point $c \in I$. The Artin-Mazur zeta function of $f$ is a rational
    function if and only if $c$ is asymptotic to a periodic orbit of $f$. 
\end{theo}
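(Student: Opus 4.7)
The strategy is to leverage conditions (VU1)--(VU3) to reduce the statement to the unimodal case already handled by Proposition \ref{prop:Proposition_1}. First, I would record that by (VU1) and (VU2), the sub-interval $J \= \langle f^2(c), f(c)\rangle$ is forward-invariant under $f$, and the restriction $g \= f|_J$ is a genuine unimodal map whose turning point is again $c$. Since $\cO_f(c) \subset J$ by (VU2), any periodic orbit to which $c$ is asymptotic under $f$ must lie in $J$ (and is therefore a periodic orbit of $g$), so $c$ is asymptotic to a periodic orbit of $f$ if and only if it is asymptotic to a periodic orbit of $g$.

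Second, I would invoke Theorem \ref{theo:Milnor_Thurston_kneading_determinant_zeta_function_theorem} applied to $f$ and $g$ to obtain the factorizations $\zeta_f(t)^{-1} = \Phi_f(t) D_f(t)$ and $\zeta_g(t)^{-1} = \Phi_g(t) D_g(t)$, where $\Phi_f, \Phi_g$ are products of cyclotomic polynomials. Hence the rationality of $\zeta_f$ is equivalent to the rationality of $D_f$, and likewise for $g$. By Proposition \ref{prop:Proposition_1} applied to the unimodal map $g$, $D_g$ is rational if and only if $c$ is asymptotic to a periodic orbit of $g$. Thus the theorem reduces to showing that $D_f(t)$ is rational if and only if $D_g(t)$ is rational.

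This last equivalence is where condition (VU3) does the essential work. After $k$ iterations, every turning point of $f$ lies in $J$, so for each non-dominant turning point $c_i$, the kneading increment $\nu_i$ decomposes as a polynomial of degree at most $k$ (recording the pre-entry trajectory through monotonicity intervals of $f$ outside $J$) plus $t^{k+1}$ multiplied by the kneading data at the two one-sided iterates $f^{k+1}(c_i^{\pm}) \in J$. These latter sequences use only the symbols $I_{j-1}, I_j$ adjacent to $c$ and therefore lie in the $\Q[[t]]$-module generated by the itineraries of points in $J$ under $g$. The kneading increment $\nu_j$ of the dominant turning point is similarly a direct analog of the unimodal kneading increment for $g$. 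Applying formula \eqref{eq:kneading_determinant_formula} and deleting a column corresponding to a monotonicity interval intersecting $J$, the determinant expansion should produce an identity of the form $D_f(t) = A(t) D_g(t) + B(t)$ where $A(t), B(t) \in \Q(t)$ and $A(t) \not\equiv 0$, from which rationality transfers in both directions.

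The main obstacle, and the step requiring the most care, is the last one: the pre-entry portions of the non-dominant kneading increments can visit arbitrary monotonicity intervals of $f$ before landing in $J$, so many columns of the kneading matrix receive contributions from several turning points simultaneously. Carrying out the determinant expansion rigorously — and certifying that the factor $A(t)$ multiplying $D_g(t)$ is neither identically zero nor cancels with a cyclotomic factor coming from $\Phi_f/\Phi_g$ — requires an organized accounting of the kneading module structure. One may need to first perform column and row operations on $[N_{c_i j}]$ that isolate a block encoding $g$'s kneading matrix and a block encoding the finite pre-entry data (a polynomial-valued block), so that the determinant factors into a unimodal contribution times a polynomial, making the dichotomy manifest.
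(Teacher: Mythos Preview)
Your strategy matches the paper's: reduce to the unimodal restriction $g = f|_J$ and show $D_f$ is rational iff $D_g$ is. But you are making the final step harder than it is, and the anticipated form $D_f = A\,D_g + B$ with a possibly troublesome $B$ is not what actually occurs.

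The simplification you are missing is this: by (VU2), the orbit of the dominant turning point $c_j$ stays in $J \subset I_{j-1}\cup I_j$, so the \emph{row} of the kneading matrix corresponding to $c_j$ has nonzero entries only in columns $j-1$ and $j$. Now delete column $j-1$ in formula \eqref{eq:kneading_determinant_formula}. In the resulting $m\times m$ matrix, row $j$ has a single nonzero entry, namely $N_{c_j,j}$. Expanding the determinant along this row gives $\det(D_{j}) = P(t)\,N_{c_j,j}$, where the cofactor $P(t)$ is the minor obtained by deleting row $j$ and column $j$; every entry of that minor lies in columns $k\notin\{j-1,j\}$ and rows $i\neq j$, hence is one of the polynomials $p_{c_i,k}(t)$ guaranteed by (VU3). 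So $P(t)\in\Z[t]$, and since $D_f(0)=1$ one has $P(0)\neq 0$. Finally $N_{c_j,j}/(1-\e(I_{j-1})t)$ is exactly the unimodal kneading determinant $D_g(t)$, yielding the clean product $D_f(t)=P(t)\,D_g(t)$ with $P$ a nonzero polynomial. No row or column operations are needed, there is no additive term $B$, and the non-vanishing of the factor is automatic.
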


\begin{proof}

    Let $f \colon I \longrightarrow I$ be a VU map with turning points set
$\Trn (f) = \{ c_1, c_2, \ldots, c_m \}$, and dominant turning point 
$c_j$. Conditions (VU1) and (VU2) in Definition \ref{def:VUM} implies that
\[ \langle f^2(c) , f(c) \rangle \subset I_{j-1} \cup I_j. \] Then, Condition 
(VU3) in Definition \ref{def:VUM} implies that for every $c_i \in \Trn (f)$ 
with $i \neq j$, there are polynomials $p_{c_i,k}(t) \in \Z [t] $ with 
$k \in \{ 0,1, \ldots, m \} \setminus \{ j-1, j \}$ such that 
\begin{equation}
    \label{eq:VU_kneading_increment}
    \nu_i \= N_{i,j-1}I_{j-1} + N_{i,j}I_J + \sum_{k \in \{ 0,1, \ldots, m \} \setminus \{ j-1, j \} } p_{c_i,k}(t)I_{k}.
\end{equation}
Thus, only finitely many coefficients of the kneading increment $\nu_i$ 
contains symbols in $\{ I_0, I_1 \ldots I_m \} \setminus \{ I_{j-1}, I_j \}$,
and the kneading matrix takes the following form
\[
[N_{i,j}] = 
\begin{bmatrix}
p_{c_1,0} & p_{c_1,1} & \ldots & N_{c_1,j-1} & N_{c_1,j} & \ldots & p_{c_1,m}\\
p_{c_2,0} & p_{c_2,1} & \ldots & N_{c_2,j-1} & N_{c_2,j} & \ldots & p_{c_2,m}\\
\vdots & \vdots & & \vdots & \vdots & & \vdots  \\
0 & 0 & \ldots & N_{c_j,j-1} & N_{c_j,j} & \ldots & 0\\
\vdots & \vdots & & \vdots & \vdots & & \vdots  \\
p_{c_m,0} & p_{c_m,1} & \ldots & N_{c_m,j-1} & N_{c_m,j} & \ldots & p_{c_m,m}\\
\end{bmatrix}
\]
Then, deleting the $j$th column, we get 
\[
D_j = 
\begin{bmatrix}
p_{c_1,0} & p_{c_1,1} & \ldots & p_{c_1,j-2} & N_{c_1,j} & \ldots & p_{c_1,m}\\
p_{c_2,0} & p_{c_2,1} & \ldots & p_{c_2,j-2} & N_{c_2,j} & \ldots & p_{c_2,m}\\
\vdots & \vdots & & \vdots & \vdots & & \vdots  \\
0 & 0 & \ldots & 0 & N_{c_j,j} & \ldots & 0\\
\vdots & \vdots & & \vdots & \vdots & & \vdots  \\
p_{c_m,0} & p_{c_m,1} & \ldots & p_{c_m,j-2} & N_{c_m,j} & \ldots & p_{c_m,m}\\
\end{bmatrix}
\]
and \[ \det (D_j) = P(t)N_{c_j,j}, \] where $P(t)$ is a polynomial.
This implies that $D_f(t)$ is a rational function if and only if $N_{c_j,j}$
is a rational function. 

Finally, since 
\[ \cO_f(c_j) \subseteq \langle f^2(c_j), f(c_j) \rangle \subseteq 
I_{j-1} \cup I_j, \]  
and $f|_{\langle f^2(c_j), f(c_j) \rangle}$ is unimodal, using 
\eqref{eq:kneading_determinant_formula}, we have that 
\[D_{f|_{\langle f^2(c_j), f(c_j) \rangle}}(t) = 
\frac{N_{c_j,j}}{1- \e(I_{j-1})}.\]
So, $N_{c_j,j}$ is a rational function if and only if 
$D_{f|_{\langle f^2(c_j), f(c_j) \rangle}}(t)$ is a rational function,
and by Theorem \ref{theo:Theorem_1}, 
$D_{f|_{\langle f^2(c_j), f(c_j) \rangle}}(t)$ is a rational function if and 
only if $c_j$, the leading turning point of $f$, is asymptotic to a 
periodic cycle.

\end{proof}

\subsection{Combinatorics of virtually unimodal maps} \label{subsection_Combinatorics_of_VU_maps}

Of particular interest is the case of VU maps with finite turning point
orbits as they will correspond to post-critically finite polynomials. 
We will follow the approach in \cite{Bonifan-Milnor_Surtherland2021_Thuston_alg_real_poly} to describe the combinatorics 
of these maps.
We will say that a piecewise monotone map 
$f \colon [a,b] \longrightarrow [a,b]$ has \emph{finite turning point orbits}
if the set $\Trn (f)$ is nonempty, and the set
\[ \cO_{f}(\Trn (f)) \= \bigcup_{c \in \Trn (f)} \cO_f(c), \] 
is finite. 
Thus, every turning point of $f$ is either periodic or preperiodic.
In this case, we can write
\[ \{a,b\}\cup \bigcup_{k \geq 0} f^{k}(\Trn (f)) = \{x_0, x_1,\ldots, x_n\},\]
with \[a = x_0 < x_1 < x_2 < \ldots < x_n = b.\] Then, for every 
$i \in \{ 0,1, \ldots , n \}$ there exists a $\rho_i \in \{ 0,1, \ldots , n \}$
such that $f(x_i) = x_{\rho_i}$. We call the vector 
\[ \ovra{\varrho}(f) = (\rho_0,\rho_1, \ldots, \rho_n), \] the 
\emph{combinatorics of $f$}.
The combinatorics of a map with finite turning point orbits represents the
action of $f$ restricted to $\{ a, b \} \cup \cO_{f}(\Trn (f))$. 

Our first goal is to characterize the combinatorics  of VU maps. Thus, we want
to know when a given vector $\ovra{\rho} \in \{ 0,1, \ldots , n \}^{n+1}$ 
represents the combinatorics of a VU map. 
For this purpose, it will be convenient to give a geometric representation of 
a given combinatorics. A \emph{piecewise linear model} for a vector
\[\ovra{\rho} = (\rho_0, \ldots, \rho_n) \in \{ 0,1, \ldots, n \}^{n+1},\] 
is a map $F_{\ovra{\rho}} \colon [0,n] \longrightarrow [0,n]$ which maps each
integer $j \in [0,n]$ to $\rho_j$ and is linear between integers. 
Thus, for each $j \in \{ 0,1, \ldots, n \}$ we have that 
\begin{equation}
    \label{eq:PM_model_def}
    (F_{\ovra{\rho}}|_{[j,j+1]})(x) = (\rho_{j+1}-\rho_j)(x -j) + \rho_j.
\end{equation}

With this geometric model we can identify turning points, orbits, and periodic 
cycles of a given vector $\ovra{\rho} \in \{ 0,1, \ldots , n \}^{n+1}$.
We say that $\ovra{\rho}$ has a \emph{turning point} in the $i$th position if
$i$ is a turning point for $F_{\ovra{\rho}}$. Thus, 
\[ \rho_{i-1} < \rho_i \hspace{1cm} \text{ and} \hspace{1cm} 
\rho_{i+1} < \rho_i, \] or
\[ \rho_{i-1} > \rho_i \hspace{1cm} \text{ and} \hspace{1cm} 
\rho_{i+1} > \rho_i. \] We denote the set of turning points of $\ovra{\rho}$
by $\Trn (\ovra{\rho})$.
For $i \in \{0, 1, \ldots, n\}$ we define the $\ovra{\rho}$-\emph{orbit} of 
$i$ as the orbit under $F_{\ovra{\rho}}$ of $i$.
We will use the notation $\cO_{F_{\ovra{\rho}}}(i)$ to denote the 
$\ovra{\rho}$-orbit of $i$.

We say that $\ovra{\rho}$ has a \emph{periodic point} in the $i$th position
if $i$ is a periodic point of $F_{\ovra{\rho}}$. In this case, we call the
vector
\[ (F_{\ovra{\rho}}(i), F_{\ovra{\rho}}^2(i), \ldots , 
F_{\ovra{\rho}}^{p(i) -1}(i), i), \]
a \emph{periodic cycle} of $\ovra{\rho}$, where 
\[ p(i) \= \min \{ p \geq 1 \colon F_{\ovra{\rho}}^p(i) = i \}. \] A periodic
cycle is \emph{post-turning} if it is contained in the 
$\ovra{\rho}-$orbit of a turning point. The \emph{length} of a cycle is equal
to the number of distinct elements in it.

Additionally, we will say that $\ovra{\rho}$ is \emph{framed} if 
$\rho_0, \rho_n$ belongs to $\{0,n\}$. Thus, $\ovra{\rho}$ is framed if
$F_{\ovra{\rho}}$ is boundary anchored ($F_{\ovra{\rho}}(\{0,n\}) \subset \{0,n\}$).
\begin{rema}
    It is easy to see that given a vector 
    $\ovra{\rho} \in \{0,1, \ldots, n \}^n$ we can extend $F_{\ovra{\rho}}$
    to $[-1,n+1]$ so that $F_{\ovra{\rho}}(\{-1,n+1\}) \subseteq \{-1,n+1\}$
    and the order of the orbits of the turning points is the same.
\end{rema}

Observe that for a framed combinatorics $\ovra{\rho}$, the entries $\{0,n\}$
form a cycle of period two, or they are fixed. From now on, all the combinatorics we consider will be framed unless explicitly stated.



\begin{lemm}
    \label{lemm:PM_combinatorics}
    Let $\ovra{\rho} \in \{ 0,1, \ldots , n \}^{n+1}$. The piecewise linear 
    map $F_{\ovra{\rho}}$ is PM if and only if
    \begin{enumerate}
        \item $\rho_i \neq \rho_{i+1}$ for all $0 \leq i < n$.
    \end{enumerate}
    Additionally, $\ovra{\varrho}(F_{\ovra{\rho}})$ coincides with 
    $\ovra{\rho}$ if and only if
    \begin{enumerate}
        \item[(2)] every cycle of $\ovra{\rho}$, except possibly the boundary
        cycle, is post-turning. Thus
        \[ \{1,2, \ldots, n-1\} \subset \bigcup_{c \in \Trn(\ovra{\rho})} \cO_{F_{\ovra{\rho}}}(c). \]
    \end{enumerate}
\end{lemm}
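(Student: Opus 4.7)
The plan is to establish each of the two equivalences directly from the piecewise-linear definition \eqref{eq:PM_model_def}, together with the elementary observation that $F_{\ovra{\rho}}$ carries integers to integers.

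For the first equivalence, on each interval $[j, j+1]$ the map $F_{\ovra{\rho}}$ is affine with slope $\rho_{j+1} - \rho_j$. If all these slopes are nonzero, then $F_{\ovra{\rho}}$ is strictly monotone on each of the $n$ pieces, and the maximal monotonicity subintervals of $F_{\ovra{\rho}}$ are obtained by amalgamating consecutive pieces on which the slope has the same sign; in particular $F_{\ovra{\rho}}$ is piecewise monotone in the sense of the paper. Conversely, if $\rho_j = \rho_{j+1}$ for some $j$, then $F_{\ovra{\rho}}$ is constant on the non-degenerate subinterval $[j, j+1]$ and hence fails to be piecewise monotone. This gives (1).

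For the second equivalence, observe first that $\Trn(F_{\ovra{\rho}}) \subseteq \{1, \ldots, n-1\}$ (a turning point can occur only at an interior integer where the two adjacent slopes have opposite signs) and that $F_{\ovra{\rho}}(\{0, 1, \ldots, n\}) \subseteq \{0, 1, \ldots, n\}$. Consequently the set
\[
S \= \{0, n\} \cup \bigcup_{k \geq 0} F_{\ovra{\rho}}^k\bigl(\Trn(F_{\ovra{\rho}})\bigr)
\]
is automatically a subset of $\{0, 1, \ldots, n\}$. Writing $S$ in increasing order as $x_0 < \cdots < x_m$, the definition of $\ovra{\varrho}(F_{\ovra{\rho}})$ gives $\ovra{\varrho}(F_{\ovra{\rho}}) = \ovra{\rho}$ if and only if $m = n$ and $x_i = i$ for every $0 \leq i \leq n$, which is in turn equivalent to $S = \{0, 1, \ldots, n\}$, i.e.\ to the displayed inclusion in (2). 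Under this inclusion, $F_{\ovra{\rho}}(i) = \rho_i$ for every $i$ reproduces the combinatorial vector.

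The only point requiring care is the bookkeeping between the displayed inclusion and the cycle phrasing of (2). The inclusion trivially forces every non-boundary cycle to be post-turning, since such a cycle lies entirely in $\{1, \ldots, n-1\}$, which by assumption is covered by the orbits of the turning points. I would flag this as the main (mild) obstacle and would take the displayed inclusion as the operative form of (2) throughout the proof, as it is precisely the condition that equates $\ovra{\varrho}(F_{\ovra{\rho}})$ with $\ovra{\rho}$.
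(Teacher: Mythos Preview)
Your proposal is correct and follows essentially the same approach as the paper: both use that the slope on each piece is $\rho_{j+1}-\rho_j$ for part (1), and both establish part (2) by showing that $\ovra{\varrho}(F_{\ovra{\rho}}) = \ovra{\rho}$ is equivalent to the displayed inclusion $\{1,\ldots,n-1\} \subset \bigcup_{c}\cO_{F_{\ovra{\rho}}}(c)$, with the paper writing out the orbit chains $k_{c,j} = F_{\ovra{\rho}}^j(c)$ explicitly where you package the same content via the set $S$. Your remark that the displayed inclusion, rather than the cycle phrasing, is the operative form of (2) is exactly what the paper's own argument uses.
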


\begin{proof}
    Let 
    $\ovra{\rho} = (\rho_0, \dots , \rho_n) \in \{ 0, 1, \ldots, n \}^{n+1}$.
    First, observe that by definition 
    $\Trn (F_{\ovra{\rho}}) \subset \{ 1, \ldots, n-1 \}$, and by 
    \eqref{eq:PM_model_def}, $F_{\ovra{\rho}}$ is PM if and only if for every
    $0 \leq i < n$ we have 
    \[(F_{\ovra{\rho}}|_{(i,i+1)})'(x) = \rho_{i+1} - \rho_i \neq 0.\] This proves the first part of the lemma.

    Now, if $\ovra{\varrho}(F_{\ovra{\rho}}) = \ovra{\rho}$, then 
    \[ \cO_{F_{\ovra{\rho}}}(\Trn (F_{\ovra{\rho}})) = \{ 0, n \} \cup
    \bigcup_{c \in \Trn (F_{\ovra{\rho}}) } \cO_{F_{\ovra{\rho}}}(c) = 
    \{ 0,1, \ldots , n\}.\] So for every $ i \in \{ 1, \ldots , n-1 \}$ there
    is a $c \in \Trn (F_{\ovra{\rho}})$ such that 
    $i \in \cO_{F_{\ovra{\rho}}}(c)$. Thus, there exists a $m \geq 0$ so that 
    $i = F_{\ovra{\rho}}^m (c)$. If $m = 0$, then 
    $i \in \Trn (F_{\ovra{\rho}})$ and we are done. If $m > 0$, put 
    $k_{c,j} = F_{\ovra{\rho}}^j(c)$ for every $1 \leq j \leq m$. Then
    $k_{c,1} = F_{\ovra{\rho}}(c) = \rho_c$, for $1 \leq j \leq m $
    \[ k_{c,j+1} = F_{\ovra{\rho}}^{j+1}(c) = 
    F_{\ovra{\rho}}(F_{\ovra{\rho}}^{j}(c)) = 
    F_{\ovra{\rho}}(k_{c,j}) = \rho_{k_{c,j}}, \] and 
    \[ \rho_{k_{c,m}} = F_{\ovra{\rho}}(k_{c,m-1}) = 
    F_{\ovra{\rho}}(F_{\ovra{\rho}}^{m-1}(c)) = F_{\ovra{\rho}}^m(c) = i.\]
    So point (2) holds.

    Suppose that (2) holds. Let $i \in \{ 1, \ldots , n-1 \}, $ and let 
    $c, k_{c,1}, \ldots , k_{c,m} \in \{ 0, \ldots , n \}$ be as in (2). Then 
    \begin{align*}
        F_{\ovra{\rho}}(c) &= \rho_c = k_{c,1}, \\
        F_{\ovra{\rho}}^2(c) &= F_{\ovra{\rho}}(F_{\ovra{\rho}}(c)) = 
        F_{\ovra{\rho}}(k_{c,1}) = \rho_{k_{c,1}} = k_{c,2}, \\
        F_{\ovra{\rho}}^3(c) &= F_{\ovra{\rho}}(F_{\ovra{\rho}}^2(c)) = 
        F_{\ovra{\rho}}(k_{c,2}) = \rho_{k_{c,2}} = k_{c,3} \\
        \vdots \\
        F_{\ovra{\rho}}^{m+1}(c) &= F_{\ovra{\rho}}(F_{\ovra{\rho}}^m(c)) = 
        F_{\ovra{\rho}}(k_{c,m}) = \rho_{k_{c,m}} = i.
    \end{align*}
    Thus, $i \in \cO_{F_{\ovra{\rho}}}(\Trn (F_{\ovra{\rho}}))$. Then
    \[\{1, \ldots , n-1 \} \subseteq \cO_{F_{\ovra{\rho}}}(\Trn (F_{\ovra{\rho}})).\]
    This implies that $\ovra{\varrho}(F_{\ovra{\rho}}) = \ovra{\rho}$.
\end{proof}


The following examples show the situations presented when conditions (1) 
and (2) in the previous lemma fail. 

The vector $\ovra{\rho} = (0,3,3,2,0)$ does not satisfy condition (1) because 
$\rho_1 = \rho_2 = 3$. Then, it is not realized as the combinatorics of any PM
map $f$ since, if it does, we
will have two consecutive points $ x_1,x_2$ with 
$f(x_1) = f(x_2) = x_3$. This implies that $f|_{[x_1,x_2]}$ is constant or 
it has a turning point in $(x_1,x_2)$ but there is no extra marked point in 
$(x_1,x_2)$, see Figure \ref{figu:Pl_bad_examples}.

The vector $\ovra{\rho} = (0,3,4,7,6,5,2,1,0)$ does not satisfy condition 
(2). Indeed, $\ovra{\rho}$ has three cycles in $\{1,2,3,4,5,6,7\}$, these are 
$(1,3,7)$, $(2,4,6)$, and $(5)$, and a single turning point at $i = 3$. Since
\[\cO_{F_{\ovra{\rho}}}(3) = \{1,3,7\},\] we have that 
\[\ovra{\varrho}(F_{\ovra{\rho}}) = (0,1,3,7,0) \neq \ovra{\rho}.\]


\begin{figure}[h!]
  \raggedright
  \begin{subfigure}[b]{.3 \textwidth}
  \centering
    \begin{tikzpicture}[line cap=round,line join=round,>=triangle 45,x=1.1cm,y=1.1cm]
    \clip(-0.3,-0.3) rectangle (4.3,4.3);
      \draw[line width=0.8, gray!25] (4,0)-- (0,0);
      \draw[line width=0.8, gray!25] (0,0)-- (0,4);
      \draw[line width=0.8, gray!25] (0,4)-- (4,4);
      \draw[line width=0.8, gray!25] (4,4)-- (4,0);
      \draw[line width=0.8, gray!25] (0,1)-- (4,1);
      \draw[line width=0.8, gray!25] (0,2)-- (4,2);
      \draw[line width=0.8, gray!25] (0,3)-- (4,3);
      \draw[line width=0.8, gray!25] (1,0)-- (1,4); 
      \draw[line width=0.8, gray!25] (2,0)-- (2,4);
      \draw[line width=0.8, gray!25] (3,0)-- (3,4);
      \draw[line width=0.9] (0,0) -- (1,3);
      \draw[line width=0.9] (1,3) -- (2,3);
      \draw[line width=0.9] (2,3) -- (3,2);
      \draw[line width=0.9] (3,2) -- (4,0);
      \draw [dash pattern=on 1.5pt off 1.5pt, gray] (0,0)-- (4,4);
    \end{tikzpicture}
   \end{subfigure}
   \hspace{3cm}
   \begin{subfigure}[b]{.3 \textwidth}
   \centering
   \begin{tikzpicture}[line cap=round,line join=round,>=triangle 45,x=1.1cm,y=1.1cm]
    \clip(-0.3,-0.3) rectangle (4.3,4.3);
      \draw[line width=0.8, gray!25] (4,0)-- (0,0);
      \draw[line width=0.8, gray!25] (0,0)-- (0,4);
      \draw[line width=0.8, gray!25] (0,4)-- (4,4);
      \draw[line width=0.8, gray!25] (4,4)-- (4,0);
      \draw[line width=0.8, gray!25] (0,0.5)-- (4,0.5);
      \draw[line width=0.8, gray!25] (0,1)-- (4,1);
      \draw[line width=0.8, gray!25] (0,1.5)-- (4,1.5);
      \draw[line width=0.8, gray!25] (0,2)-- (4,2);
      \draw[line width=0.8, gray!25] (0,2.5)-- (4,2.5);
      \draw[line width=0.8, gray!25] (0,3)-- (4,3);
      \draw[line width=0.8, gray!25] (0,3.5)-- (4,3.5);
      \draw[line width=0.8, gray!25] (0.5,0)-- (0.5,4);
      \draw[line width=0.8, gray!25] (1,0)-- (1,4);
      \draw[line width=0.8, gray!25] (1.5,0)-- (1.5,4);
      \draw[line width=0.8, gray!25] (2,0)-- (2,4);
      \draw[line width=0.8, gray!25] (2.5,0)-- (2.5,4);
      \draw[line width=0.8, gray!25] (3,0)-- (3,4);
      \draw[line width=0.8, gray!25] (3.5,0)-- (3.5,4);
      \draw[line width=0.9] (0,0) -- (0.5,1.5);
      \draw[line width=0.9] (0.5,1.5) -- (1,2);
      \draw[line width=0.9] (1,2) -- (1.5,3.5);
      \draw[line width=0.9] (1.5,3.5) -- (2,3);
      \draw[line width=0.9] (2,3) -- (2.5,2.5);
      \draw[line width=0.9] (2.5,2.5) -- (3,1);
      \draw[line width=0.9] (3,1) -- (3.5,0.5);
      \draw[line width=0.9] (3.5,0.5) -- (4,0);
      \draw [dash pattern=on 1.5pt off 1.5pt, gray] (0,0)-- (4,4);
    \end{tikzpicture}
   \end{subfigure}
  \caption{The graphics of the piecewise linear models associated to the 
  combinatorics $\ovra{\rho} = (0,3,3,2,0)$ (left figure) and 
  $\ovra{\rho} = (0,3,4,7,6,5,2,1,0)$ (right figure).}
  \label{figu:Pl_bad_examples}
\end{figure}
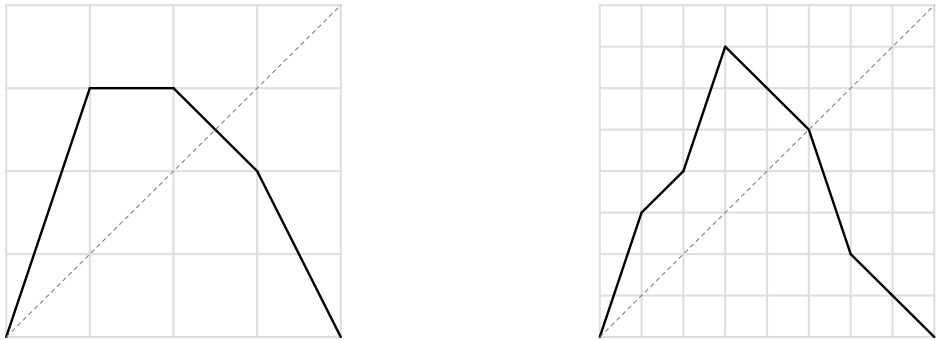





We will say that a vector $\ovra{\rho} \in \{ 0,1, \ldots, n\}^{n+1}$ is 
a \emph{virtually unimodal combinatorics} if it satisfies condition (1) and 
(2) in Lemma \ref{lemm:PM_combinatorics} and the piecewise linear map
$F_{\ovra{\rho}}$ is virtually unimodal. 

As an example, the vector $(5,2,3,4,2,0)$ is a virtually unimodal 
combinatorics, with $\Trn (\ovra{\rho}) = \{ 1,3\}$ and dominant turning 
point $3$, see Figure \ref{figu:VU_combinatorics_examples}.

On the other hand, the vector $(6,2,1,4,5,3,0)$ is not virtually unimodal.
It has two turning points $\Trn (\ovra{\rho}) = \{1,4\}$, and condition 
(VUC3) does not hold since 
\[ \cO_{F_{\ovra{\rho}}}(1) = \{1,2\} \hspace{1cm} \text{ and } \hspace{1cm}
\cO_{F_{\ovra{\rho}}}(4) = \{4,5,3\}.\] So, we have that $k_{4,1} = 5$ and 
$k_{4,2} = 3$, then 
\[1 < \min (k_{4,1}, k_{k,2}) = 3 \hspace{1cm} \text{ and } \hspace{1cm} 
2 < \min (k_{4,1}, k_{k,2}) = 3.\] 


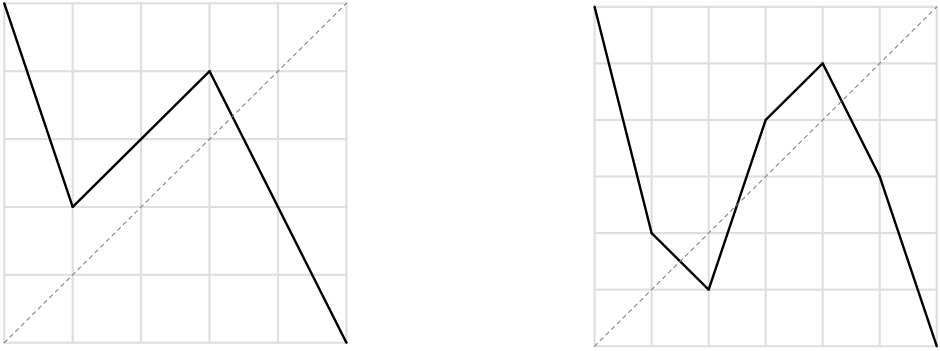
\begin{figure}[h!]
  \raggedright
  \begin{subfigure}[b]{.3 \textwidth}
  \centering
    \begin{tikzpicture}[line cap=round,line join=round,>=triangle 45,x=0.9cm,y=0.9cm]
    \clip(-0.3,-0.3) rectangle (5.3,5.3);
      \draw[line width=0.8, gray!25] (5,0)-- (0,0);
      \draw[line width=0.8, gray!25] (0,0)-- (0,5);
      \draw[line width=0.8, gray!25] (0,5)-- (5,5);
      \draw[line width=0.8, gray!25] (5,5)-- (5,0);
      \draw[line width=0.8, gray!25] (0,1)-- (5,1);
      \draw[line width=0.8, gray!25] (0,2)-- (5,2);
      \draw[line width=0.8, gray!25] (0,3)-- (5,3);
      \draw[line width=0.8, gray!25] (1,0)-- (1,5); 
      \draw[line width=0.8, gray!25] (2,0)-- (2,5);
      \draw[line width=0.8, gray!25] (3,0)-- (3,5);
      \draw[line width=0.8, gray!25] (4,0)-- (4,5);
      \draw[line width=0.8, gray!25] (0,4)-- (5,4);
      \draw[line width=0.9] (0,5) -- (1,2);
      \draw[line width=0.9] (1,2) -- (2,3);
      \draw[line width=0.9] (2,3) -- (3,4);
      \draw[line width=0.9] (3,4) -- (4,2);
      \draw[line width=0.9] (4,2) -- (5,0);
      \draw [dash pattern=on 1.5pt off 1.5pt, gray] (0,0)-- (5,5);
    \end{tikzpicture}
   \end{subfigure}
   \hspace{3cm}
   \begin{subfigure}[b]{.3 \textwidth}
   \centering
   \begin{tikzpicture}[line cap=round,line join=round,>=triangle 45,x=0.75cm,y=0.75cm]
    \clip(-0.3,-0.3) rectangle (7.3,7.3);
      \draw[line width=0.8, gray!25] (6,0)-- (0,0);
      \draw[line width=0.8, gray!25] (0,0)-- (0,6);
      \draw[line width=0.8, gray!25] (0,6)-- (6,6);
      \draw[line width=0.8, gray!25] (6,6)-- (6,0);
      \draw[line width=0.8, gray!25] (0,1)-- (6,1);
      \draw[line width=0.8, gray!25] (0,2)-- (6,2);
      \draw[line width=0.8, gray!25] (0,3)-- (6,3);
      \draw[line width=0.8, gray!25] (0,4)-- (6,4);
      \draw[line width=0.8, gray!25] (0,5)-- (6,5);
      \draw[line width=0.8, gray!25] (1,0)-- (1,6);
      \draw[line width=0.8, gray!25] (2,0)-- (2,6);
      \draw[line width=0.8, gray!25] (3,0)-- (3,6);
      \draw[line width=0.8, gray!25] (4,0)-- (4,6);
      \draw[line width=0.8, gray!25] (5,0)-- (5,6);
      \draw[line width=0.9] (0,6) -- (1,2);
      \draw[line width=0.9] (1,2) -- (2,1);
      \draw[line width=0.9] (2,1) -- (3,4);
      \draw[line width=0.9] (3,4) -- (4,5);
      \draw[line width=0.9] (4,5) -- (5,3);
      \draw[line width=0.9] (5,3) -- (6,0);
      \draw [dash pattern=on 1.5pt off 1.5pt, gray] (0,0)-- (6,6);
    \end{tikzpicture}
   \end{subfigure}
  \caption{On the left, the graphic of the piecewise linear model associated to the virtually unimodal combinatorics $(5,2,3,4,2,0)$. On the right, the 
  graphic of the piecewise linear model associated to the combinatorics
  $(6,2,1,4,5,3,0)$ (not virtually unimodal).}
  \label{figu:VU_combinatorics_examples}
\end{figure}

\subsection{Constructing virtually unimodal combinatorics} \label{subsection_construction_VU_combinatorics}

We present a method to construct virtually unimodal combinatorics with any 
prescribed number of turning points, starting from a unimodal combinatorics.

This method is not general, though it is enough for our purpose, and it can be
extended without major changes.

First, we start with the unimodal combinatorics $\ovra{\rho}_0=(0,2,3,1,0)$.
This combinatorics has a single turning point at $2$ that is periodic of 
period three. So the piecewise linear map $F_{\ovra{\rho}_0}$ is unimodal
and its turning point is periodic of period three, see Figure \ref{figure:period_three_comb}.
By Sarkovskii's Theorem, $F_{\ovra{\rho}_0}$ has periodic points of every 
period. Moreover, periodic points are dense in $[1,3]$.

\begin{figure}[h!]
  \centering
    \begin{tikzpicture}[line cap=round,line join=round,>=triangle 45,x=1.1cm,y=1.1cm]
    \clip(-0.3,-0.3) rectangle (4.3,4.3);
      \draw[line width=0.8, gray!25] (4,0)-- (0,0);
      \draw[line width=0.8, gray!25] (0,0)-- (0,4);
      \draw[line width=0.8, gray!25] (0,4)-- (4,4);
      \draw[line width=0.8, gray!25] (4,4)-- (4,0);
      \draw[line width=0.8, gray!25] (0,1)-- (4,1);
      \draw[line width=0.8, gray!25] (0,2)-- (4,2);
      \draw[line width=0.8, gray!25] (0,3)-- (4,3);
      \draw[line width=0.8, gray!25] (1,0)-- (1,4); 
      \draw[line width=0.8, gray!25] (2,0)-- (2,4);
      \draw[line width=0.8, gray!25] (3,0)-- (3,4);
      \draw[line width=0.9] (0,0) -- (1,2);
      \draw[line width=0.9] (1,2) -- (2,3);
      \draw[line width=0.9] (2,3) -- (3,1);
      \draw[line width=0.9] (3,1) -- (4,0);
      \draw[-to, color=red] (2,2) -- (2,2.5);
      \draw[ color=red] (2,2.5) -- (2,3);
      \draw[-to, color=red] (2,3) -- (2.5,3);
      \draw[ color=red] (2.5,3) -- (3,3);
      \draw[-to, color=red] (3,3) -- (3,2);
      \draw[ color=red] (3,2) -- (3,1);
      \draw[-to, color=red] (3,1) -- (2,1);
      \draw[ color=red] (2,1) -- (1,1);
      \draw[-to, color=red] (1,1) -- (1,1.5);
      \draw[ color=red] (1,1.5) -- (1,2);
      \draw[-to, color=red] (1,2) -- (1.5,2);
      \draw[ color=red] (1.5,2) -- (2,2);
      \draw [dash pattern=on 1.5pt off 1.5pt, gray] (0,0)-- (4,4);
    \end{tikzpicture}
  \caption{Piecewise linear model associated with the combinatoric $\ovra{\rho}_0 = (0,2,3,1,0)$.}
  \label{figure:period_three_comb}
\end{figure}
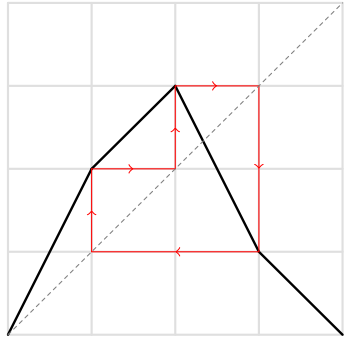    

To construct a virtually unimodal combinatoric with $\nu \geq 2$ turning
points we do the following:
\begin{itemize}
    \item[1st] Choose $\nu -1$ periodic points $k_{1,0}, k_{2,0}
    \ldots, k_{\nu-1,0}$ of $F_{\ovra{\rho}_0}$ in $[1,3]$ in such a way 
    that $k_{\nu-1,0} \leq 2$ and
    for every $1 \leq i \leq \nu -1$ we have 
    \[ \sg (k_{i,0} - k_{i+1,0}) = (-1)^i \text{ or } (-1)^{i+1}.\]
    \item[2nd] Mark the orbit of the periodic points and the orbit of the 
    turning point 
    \[\{ 1,2,3 \} \cup \bigcup_{i=1}^{\nu-1} \cO_{F_{\ovra{\rho}_0}}
    (k_{i,0}) = \{ y_1,y_2, \ldots , y_{n'} \} \] with 
    \[y_1 < y_2 < \ldots < y_{n'}.\]
    Mark the position of the chosen periodic points 
    $k_{1,0}, k_{2,0}, \ldots , k_{\nu-1,0} $ and the turning point $2$. 
    Thus, let $k_{1,1}, k_{2,1}, \ldots , k_{\nu -1, 1}, c \in \N$ be 
    so that $y_{k_{i,1}} = k_{i,0}$ for all $1 \leq i \leq \nu-1$ and 
    $y_c = 2$. Write the vector 
    \[ \ovra{\xi} = (\xi_1, \xi_2, \ldots , \xi_{n'}) \] so that 
    $F_{\ovra{\rho}_0}(y_i) = y_{\xi_i}$ for all $1 \leq i \leq n'$.
    \item[3rd] Write $\ovra{\xi}$ as a vector in 
    $\{ 0,1, \ldots , n'+\nu+1 \}^{n'+\nu+1}$
    in the following way. Put \[ \ovra{\rho}= (\rho_0, \rho_1, 
    \ldots, \rho_{n'+\nu +1} ) \] where
    \[ \rho_0 = 
    \begin{cases}
        0 & \text{ if } \nu -1 \equiv 0 (\mod{2})\\
        n' + \nu + 1 & \text{ if } \nu -1 \equiv 1 (\mod{2})
    \end{cases}\]
    \[\rho_i = 
    \begin{cases}
        k_{i,1}  & \text{ if } 1 \leq i \leq \nu -1 \\
        \xi_i  & \text{ if } \nu \leq i \leq \nu + n' -1 \\
        0 & \text{ if } i = \nu + n'
    \end{cases} \]
\end{itemize}

By construction, we have that the vector $\ovra{\rho}$  is a virtually 
unimodal combinatorics, with the dominant turning point being the unique 
turning point of $\ovra{\rho}$ in 
$\{ \rho_{\nu+1}, \rho_{\nu+2}, \ldots ,\rho_{\nu + n' +2}  \}$.


\begin{exam}
    \label{example_VU_combinatorics}
    Let $\nu \geq 2$. We start with the combinatorics 
    $\ovra{\rho}_0 = (0,2,3,1,0)$, and we choose the unique periodic point 
    of minimal period two of $F_{\ovra{\rho}_0}$ in $[1,2]$, $k_1$ and its 
    image $k_2 \= F_{\ovra{\rho}}(k_1) \in [2,3]$. If we mark them, we get 
    the set \[ \{ 1, k_1, 2, k_2,3 \}. \] Now, we construct the combinatorics
    associated with these marked points
    \[ \ovra{\xi} = (3,4,5,2,1). \]
    Finally, we write the combinatorics $\ovra{\rho}_{F}(\nu)$ including the
    $\nu -1$
    turning points whose images will oscillate between $k_1$ and $k_2$.
    \begin{itemize}
        \item If $\nu \geq 2$ is even, we write
        \[ \ovra{\rho}_F(\nu) = (\nu + 5, \nu + 1, \nu + 3, \ldots, 
        \nu + 1, \rho_{\nu } = \nu + 2, \nu +  3, \nu + 4, \nu + 1,
        \nu, 0); \]
        \item If $\nu > 2$ is odd, we write
        \[ \ovra{\rho}_F(\nu) = (0, \nu + 3, \nu + 1, \ldots, \nu + 1, 
        \rho_{\nu} = \nu + 2,
        \nu + 3, \nu + 4, \nu + 1, \nu, 0). \]
    \end{itemize}
    We are marking the coordinate $\nu+1$ in the vector where $\ovra{\xi}$
    start.

    For $ \nu = 2$, we get the combinatorics 
    \[ \ovra{\rho}_F(2) = (7,3,4,5,6,3,2,0), \] and for $\nu = 3$ we get the 
    combinatorics \[ \ovra{\rho}_F(3) = (0,6,4,5,6,7,4,3,0). \] See 
    Figure \ref{figu:VU_comb_examples_for_l3_and_l4} for the 
    graph of the piecewise linear models associated with each of these 
    combinatorics.
\end{exam}


\begin{figure}[h!]
  \raggedright
  \begin{subfigure}[b]{.3 \textwidth}
  \centering
    \begin{tikzpicture}[line cap=round,line join=round,>=triangle 45,x=0.85cm,y=0.85cm]
    \clip(-0.3,-0.3) rectangle (7.3,7.3);
      \draw[line width=0.8, gray!25] (7,0)-- (0,0);
      \draw[line width=0.8, gray!25] (0,0)-- (0,7);
      \draw[line width=0.8, gray!25] (0,7)-- (7,7);
      \draw[line width=0.8, gray!25] (7,7)-- (7,0);
      \draw[line width=0.8, gray!25] (0,1)-- (7,1);
      \draw[line width=0.8, gray!25] (0,2)-- (7,2);
      \draw[line width=0.8, gray!25] (0,3)-- (7,3);
      \draw[line width=0.8, gray!25] (1,0)-- (1,7); 
      \draw[line width=0.8, gray!25] (2,0)-- (2,7);
      \draw[line width=0.8, gray!25] (3,0)-- (3,7);
      \draw[line width=0.8, gray!25] (4,0)-- (4,7);
      \draw[line width=0.8, gray!25] (0,4)-- (7,4);
      \draw[line width=0.8, gray!25] (5,0)-- (5,7);
      \draw[line width=0.8, gray!25] (0,5)-- (7,5);
      \draw[line width=0.8, gray!25] (6,0)-- (6,7);
      \draw[line width=0.8, gray!25] (0,6)-- (7,6);
      \draw[line width=0.9] (0,7) -- (1,3);
      \draw[line width=0.9] (1,3) -- (2,4);
      \draw[line width=0.9] (2,4) -- (3,5);
      \draw[line width=0.9] (3,5) -- (4,6);
      \draw[line width=0.9] (4,6) -- (5,3);
      \draw[line width=0.9] (5,3) -- (6,2);
      \draw[line width=0.9] (6,2) -- (7,0);
      \draw[-to, color=red] (2,4) -- (3,4);
      \draw[ color=red] (3,4) -- (4,4);
      \draw[-to, color=red] (4,4) -- (4,5);
      \draw[ color=red] (4,5) -- (4,6);
      \draw[-to, color=red] (4,6) -- (5,6);
      \draw[ color=red] (5,6) -- (6,6);
      \draw[-to, color=red] (6,6) -- (6,4);
      \draw[ color=red] (6,4) -- (6,2);
      \draw[-to, color=red] (6,2) -- (4,2);
      \draw[ color=red] (4,2) -- (2,2);
      \draw[-to, color=red] (2,2) -- (2,3);
      \draw[ color=red] (2,3) -- (2,4);
      \draw[-to, color=blue] (3,3) -- (3,3.75);
      \draw[ color=blue] (3,3.75) -- (3,5);
      \draw[-to, color=blue] (3,5) -- (3.75,5);
      \draw[ color=blue] (3.75,5) -- (5,5);
      \draw[-to, color=blue] (5,5) -- (5,4);
      \draw[ color=blue] (5,4) -- (5,3);
      \draw[-to, color=blue] (5,3) -- (4,3);
      \draw[ color=blue] (4,3) -- (3,3);
      \draw[-to, color=blue] (1,3) -- (2.5,3);
      \draw[ color=blue] (2.5,3) -- (3,3);
      \draw [dash pattern=on 1.5pt off 1.5pt, gray] (0,0)-- (7,7);
    \end{tikzpicture}
   \end{subfigure}
   \hspace{3cm}
   \begin{subfigure}[b]{.3 \textwidth}
   \centering
   \begin{tikzpicture}[line cap=round,line join=round,>=triangle 45,x=0.75cm,y=0.75cm]
    \clip(-0.3,-0.3) rectangle (8.3,8.3);
      \draw[line width=0.8, gray!25] (8,0)-- (0,0);
      \draw[line width=0.8, gray!25] (0,0)-- (0,8);
      \draw[line width=0.8, gray!25] (0,8)-- (8,8);
      \draw[line width=0.8, gray!25] (8,8)-- (8,0);
      \draw[line width=0.8, gray!25] (0,1)-- (8,1);
      \draw[line width=0.8, gray!25] (0,2)-- (8,2);
      \draw[line width=0.8, gray!25] (0,3)-- (8,3);
      \draw[line width=0.8, gray!25] (0,4)-- (8,4);
      \draw[line width=0.8, gray!25] (0,5)-- (8,5);
      \draw[line width=0.8, gray!25] (0,6)-- (8,6);
      \draw[line width=0.8, gray!25] (0,7)-- (8,7);
      \draw[line width=0.8, gray!25] (1,0)-- (1,8);
      \draw[line width=0.8, gray!25] (2,0)-- (2,8);
      \draw[line width=0.8, gray!25] (3,0)-- (3,8);
      \draw[line width=0.8, gray!25] (4,0)-- (4,8);
      \draw[line width=0.8, gray!25] (5,0)-- (5,8);
      \draw[line width=0.8, gray!25] (6,0)-- (6,8);
      \draw[line width=0.8, gray!25] (7,0)-- (7,8);
      \draw[line width=0.9] (0,0) -- (1,6);
      \draw[line width=0.9] (1,6) -- (2,4);
      \draw[line width=0.9] (2,4) -- (3,5);
      \draw[line width=0.9] (3,5) -- (4,6);
      \draw[line width=0.9] (4,6) -- (5,7);
      \draw[line width=0.9] (5,7) -- (6,4);
      \draw[line width=0.9] (6,4) -- (7,3);
      \draw[line width=0.9] (7,3) -- (8,0);
      \draw[-to, color=red] (3,3) -- (3,4);
      \draw[ color=red] (3,4) -- (3,5);
      \draw[-to, color=red] (3,5) -- (4,5);
      \draw[ color=red] (4,5) -- (5,5);
      \draw[-to, color=red] (5,5) -- (5,6);
      \draw[ color=red] (5,6) -- (5,7);
      \draw[-to, color=red] (5,7) -- (6,7);
      \draw[ color=red] (6,7) -- (7,7);
      \draw[-to, color=red] (7,7) -- (7,5);
      \draw[ color=red] (7,5) -- (7,3);
      \draw[-to, color=red] (7,3) -- (5,3);
      \draw[ color=red] (5,3) -- (3,3);
      \draw[-to, color=blue] (4,4) -- (4,4.75);
      \draw[ color=blue] (4,4.75) -- (4,6);
      \draw[-to, color=blue] (4,6) -- (4.75,6);
      \draw[ color=blue] (4.75,6) -- (6,6);
      \draw[-to, color=blue] (6,6) -- (6,5);
      \draw[ color=blue] (6,5) -- (6,4);
      \draw[-to, color=blue] (6,4) -- (5,4);
      \draw[ color=blue] (5,4) -- (4,4);
      \draw[-to, color=blue] (1,6) -- (3,6);
      \draw[ color=blue] (3,6) -- (4,6);
      \draw[-to, color=blue] (2,4) -- (3.75,4);
      \draw[ color=blue] (3.75,4) -- (4,4);
      \draw [dash pattern=on 1.5pt off 1.5pt, gray] (0,0)-- (8,8);
    \end{tikzpicture}
   \end{subfigure}
  \caption{Graphics of the virtually unimodal combinatorics 
  $\ovra{\rho}_F(2)$ (left figure) and $\ovra{\rho}_F(3)$(right figure).}
  \label{figu:VU_comb_examples_for_l3_and_l4}
\end{figure}
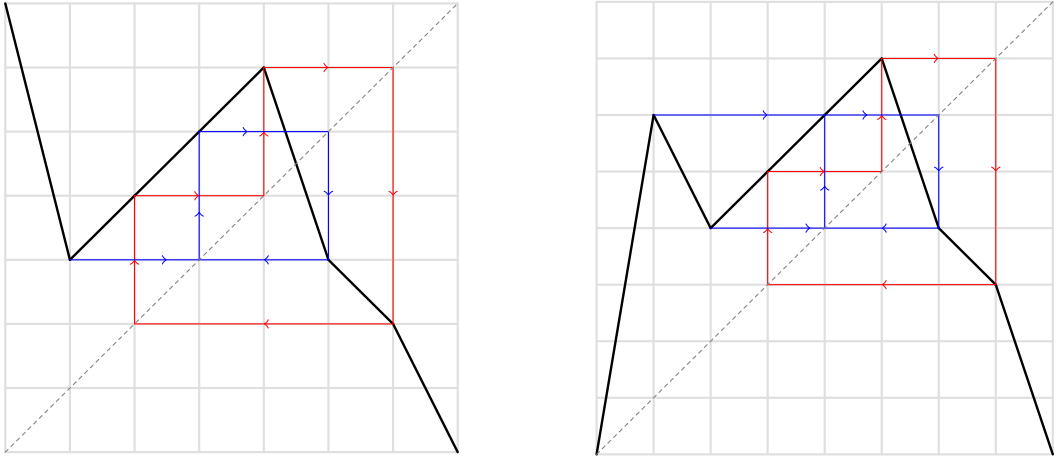

\section{Proof of the main theorem} \label{section:Proof_of_the_main_theorem}

In this section we will give the proof of the Main Theorem and Theorem \ref{theo:combinatorial_type_theorem}. 

Recall the following notation. For fixed $d \geq 3$, let 
$1 \leq \nu \leq d-1$, and 
$\boldsymbol{\mu} = (\mu_1,\mu_2, \ldots, \mu_{\nu}) \in \Z_+^{\nu}$, with 
$\sum_{i = 1}^{\nu}\mu_i = d-1$. We denote by $\cP_d^{\boldsymbol{\mu}}(\R)$
the set of real polynomials of degree $d$ with $\nu$ distinct real critical
points $c_1< c_2< \ldots< c_{\nu}$ and corresponding order 
$\mu_1, \mu_2, \ldots, \mu_{\nu}$.

Let $f \colon \R \longrightarrow \R$ be a real polynomial of degree 
$d$. We denote by $ \Crit (f) $ the set of critical points of $f$,
thus \[ \Crit (f) \= \{ x \in \R \colon f'(x) = 0 \}. \]

The \emph{real filled Julia set} of $f$, denoted by $K_{\R}(f)$, is the set
of all real numbers $x$ for which $\cO_f(x)$ is bounded. This set coincides 
with the real trace of the filled Julia set of $f$ as a polynomial
acting on $\C$. Note that $K_{\R}(f)$ is compact, 

\begin{equation}
    \label{eq:invariance_of_filled_Julia}
   f(K_{\R}(f)) \subseteq K_{R} \hspace{1cm} \text{and} \hspace{1cm}
f^{-1}(K_{\R}(f)) \subseteq K_{\R}(f). 
\end{equation}

By definition, the set of periodic points of $f$ is contained in 
$K_{\R}(f)$, then \[ \zeta_{f}(t) = \zeta_{f|_{K_{\R}(f)}}(t). \]

Given a periodic point $p$ of $f$ (or the cycle $\cO_{f} (p)$)  
its \emph{basin} is defined as the set 
\[ \cB_f(p) \= \{x \in \R \colon p \in \omega_{f}(x) \}. \] 
Observe that if $p$ is attracting, then its basin contains an open set. 
The \emph{immediate attracting basin} of $p$, denoted by
$\widehat{\cB}_f(p)$, 
is the union of the connected components of its basin intersecting 
$\cO_{f}(p)$. If $\widehat{\cB}_f(p)$ is a neighbourhood of $\cO_f(p)$ then
this orbit is called a \emph{two-sided attractor}, otherwise we will say
that is a \emph{one-sided attractor}.

The \emph{basin of $f$}
is the union of the basins of its attracting periodic points, and we denote 
it by $\cB(f)$. The \emph{repeller} of $f$ is the set
\[ \Lambda (f) \= K_{\R}(f) \setminus \cB (f). \] 

\subsection{Polynomial realization of virtually unimodal combinatorics} 
\label{subsect:polynomial_realization}
We start by proving that the virtually unimodal combinatorics built in the
last section can be realize by polynomial. 

Given a piecewise monotone combinatorics 
$\ovra{\rho} \in \{0,1, \ldots, n\}^{n+1}$, we can assign to each of its 
turning points $\tau_1 < \tau_2 < \ldots < \tau_{\nu}$, a local
degree $d_1, d_2, \ldots, d_{\nu}$ that is an even number. We call the 
pair $(\ovra{\rho},(d_1, \ldots, d_{\nu}))$ a \emph{graded combinatorics}.
We will say that a graded combinatorics 
$(\ovra{\rho},(d_1,\ldots,d_{\nu}))$
can be \emph{realized as a polynomial combinatorics} if there exists a real
polynomial map $P$ of degree $d= 1 + \sum_{i=1}^{\nu} (d_{i}-1)$ 
with $\nu$ real critical points $c_1, c_2, \ldots, c_{\nu}$ each of them 
with local degree $d_j$, so that $\ovra{\varrho}(P) = \ovra{\rho}$. Observe
that in this case, the multiplicity of each critical point $c_j$ is equal
to $\mu_j = d_j -1$.
\begin{prop}
    \label{prop:VU_PCF_polynomial_realization}
    For every $d \geq 3$, every $1 \leq \nu \leq d-1$, and every 
    $\boldsymbol{\mu} = (\mu_1,\mu_2, \ldots, \mu_{\nu}) \in \Z_+^{\nu}$
    with $\sum_{i = 1}^{\nu}\mu_i = d-1$, the set of post-critically finite
    virtually unimodal real polynomial map is non-empty in 
    $\cP_d^{\boldsymbol{\mu}}(\R)$.
\end{prop}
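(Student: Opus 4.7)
The plan is to use the combinatorial construction of Section \ref{subsection_construction_VU_combinatorics} as the blueprint, and then appeal to a realization theorem for post-critically finite real polynomials to produce an element of $\cP_d^{\boldsymbol{\mu}}(\R)$ that actually carries that combinatorics. Concretely, I would first take the virtually unimodal combinatorics $\ovra{\rho}_F(\nu)$ of Example \ref{example_VU_combinatorics}, which by construction has exactly $\nu$ turning points, a distinguished dominant turning point, and finite turning point orbits, so it gives a piecewise linear VU model with the correct number of folds.

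Next I would upgrade this purely combinatorial object to a \emph{graded} combinatorics by assigning to the $i$th turning point the local degree $d_i = \mu_i + 1$. When $\mu_i$ is odd this is an even integer, matching a genuine turning point of a real polynomial; when $\mu_i$ is even we insert an extra marked point (an inflection) at the appropriate position in the orbit data and extend $\ovra{\rho}_F(\nu)$ by recording its image. This step is bookkeeping: adding inflection marks does not change which points are turning points, nor does it affect the VU conditions (VU1)--(VU3), because those conditions concern only the dominant turning point $c$ and the interval $\langle f^2(c), f(c)\rangle$.

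The core step is to realize the resulting graded combinatorics as an actual polynomial. For this I would invoke the Thurston-type realization theorem for real polynomials of Bonifant--Milnor--Sutherland \cite{Bonifan-Milnor_Surtherland2021_Thuston_alg_real_poly}: any admissible graded combinatorics of a piecewise monotone post-critically finite map, with local degrees $d_i$ summing to give total degree $d = 1 + \sum_i(d_i - 1)$, is realized by a unique (up to real affine conjugacy) PCF real polynomial of degree $d$ with critical multiplicities $\mu_i = d_i - 1$. Since our combinatorics is post-critically finite by construction (all turning-point orbits are finite), this gives a PCF polynomial $P \in \cP_d^{\boldsymbol{\mu}}(\R)$ with $\ovra{\varrho}(P) = \ovra{\rho}_F(\nu)$ (extended if necessary). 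Finally, one reads off (VU1)--(VU3) directly from $\ovra{\rho}_F(\nu)$: the dominant turning point and its interval $\langle f^2(c), f(c)\rangle$ are prescribed by the combinatorics, all other turning points land in that interval after finitely many iterations, and by step (VU3) the verification reduces to a finite check on the piecewise linear model.

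The main obstacle I anticipate is verifying admissibility (no Thurston obstruction) of the graded combinatorics, especially once inflection marks are inserted for even $\mu_i$. The cleanest way around it is to restrict the appeal to \cite{Bonifan-Milnor_Surtherland2021_Thuston_alg_real_poly} to the case where all $\mu_i$ are odd, and then handle the general case by a confluence/deformation argument inside $\cP_d^{\boldsymbol{\mu}}(\R)$: use the fact that the critical values are full coordinates on this manifold to merge adjacent odd-order turning points into a single even-order critical point while keeping the orbit relations post-critically finite. If instead one wishes to bypass Bonifant--Milnor--Sutherland entirely, one can prove the unimodal case $\nu = 1$ directly from Milnor--Thurston entropy monotonicity (the period-three seed $\ovra{\rho}_0 = (0,2,3,1,0)$ is realized in every $\cP_d^{(d-1)}(\R)$), and then use the transversality results of Levin--Shen--van Strien \cite{Levin_Shen_van_Strien_Transversality_elementary_proof} to deform additional critical points into position along the $\nu$-dimensional manifold $\cP_d^{\boldsymbol{\mu}}(\R)$ so as to produce the remaining (short) critical orbits required by $\ovra{\rho}_F(\nu)$.
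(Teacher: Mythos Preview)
Your overall plan---build the virtually unimodal combinatorics of Example \ref{example_VU_combinatorics} and then invoke a realization theorem---is exactly the paper's strategy. The difference is in which realization theorem is used and how the obstruction is discharged.

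The paper does not go through Bonifant--Milnor--Sutherland or worry about Thurston obstructions at all. Instead it quotes Poirier's realization theorem \cite{Porier2010_Hubbard_trees} in the following form (Theorem \ref{theo:polynomial_realization_Porier}): a graded combinatorics $(\ovra{\rho},(d_1,\ldots,d_\nu))$ is realizable by a PCF polynomial if and only if $\ovra{\rho}$ is \emph{expanding}, meaning that for any two consecutive indices $j,j+1$ that are both of Julia type there is an iterate $m$ with $|F_{\ovra{\rho}}^m(j)-F_{\ovra{\rho}}^m(j+1)|>1$. This replaces the abstract admissibility check you anticipate by a concrete finite combinatorial verification, carried out in a short lemma (Lemma \ref{lem:expanding_VUC}): the only Fatou points are the three points on the orbit of the dominant turning point, and the non-dominant turning points were chosen so that consecutive ones land on opposite sides of the dominant turning point, which immediately gives the required separation. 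Once expansion is checked, Poirier's theorem produces the PCF polynomial with the prescribed local degrees $\mu_i+1$, and the VU properties are inherited from the combinatorics as you say.

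So your proposal is correct in outline, but the detours you sketch (inflection-mark bookkeeping, confluence/deformation of critical points, or a transversality bootstrap from the unimodal case) are all unnecessary once you use the expanding criterion. In particular, your ``main obstacle'' evaporates: there is no obstruction analysis to perform, only the two-line check that adjacent Julia points eventually separate in the piecewise linear model.
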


Observe that, in Section \ref{subsection_construction_VU_combinatorics}, 
we constructed a 
virtually unimodal combinatorics with $\nu$ distinct turning points for any
$\nu \geq  2$. So, to prove Proposition 
\ref{prop:VU_PCF_polynomial_realization}, is enough to prove that 
these virtually unimodal combinatorics are realizable by a post-critically finite 
polynomial of degree $d$.

For that purpose we need to start with some definitions. 
Let $\ovra{\rho} \in \{0,1, \ldots, n\}^{n+1}$ be a piecewise monotone 
combinatorics. We will say that $x \in \{0,1, \ldots, n\}$ is a 
\emph{Fatou point} or it is of \emph{Fatou type} if 
$\cO_{F_{\ovra{\rho}}}(x)$ contains a turning point. Else, we say it is 
a \emph{Julia point} or of \emph{Julia type.}
We will say that $\ovra{\rho}$ is an \emph{expanding combinatorics} 
(or just expanding) if for any $0 \leq j < n$ so that $j$ and $j+1$ are of
Julia type there exists $m(j) \geq 1$ for which
$|F_{\ovra{\rho}}^{m(j)}(j) - F_{\ovra{\rho}}^{m(j)}(j+1)| > 1$.

We make use of the following theorem that is a direct consequence of 
\cite[Theorem 1.1]{Porier2010_Hubbard_trees} (Compare with 
\cite{Douady-Hubbard1993_Top_charac_of_rationa_maps},
\cite{Bruin-Schleicher2008_Hubbard_trees_quadratic_polynomials},
\cite{Bonifan-Milnor_Surtherland2021_Thuston_alg_real_poly}). )
\begin{theo}
    \label{theo:polynomial_realization_Porier}
    A graded combinatorics $(\ovra{\rho},(d_1,\ldots,d_{\nu}))$
    can be realized as a polynomial combinatorics if and only if 
    $\ovra{\rho}$ is expanding.
\end{theo}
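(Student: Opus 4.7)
The plan is to produce, for each admissible $(d, \nu, \boldsymbol{\mu})$, an explicit graded virtually unimodal combinatorics whose prescribed local degrees match $\boldsymbol{\mu}$, verify the expanding hypothesis of Theorem \ref{theo:polynomial_realization_Porier}, and then invoke Poirier's theorem to obtain a post-critically finite polynomial realization lying in $\cP_d^{\boldsymbol{\mu}}(\R)$. Because the combinatorics is virtually unimodal by construction, the resulting polynomial inherits the VU property automatically.

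For $\nu = 1$, I would attach the local degree $d$ to the unique turning point of the unimodal combinatorics $\ovra{\rho}_0 = (0, 2, 3, 1, 0)$; a direct check shows its Julia positions are $\{0, 4\}$, so expansion is vacuous and Theorem \ref{theo:polynomial_realization_Porier} yields the desired polynomial. For $\nu \geq 2$, I would use the family $\ovra{\rho}_F(\nu)$ from Example \ref{example_VU_combinatorics}, which by construction is framed and virtually unimodal with exactly $\nu$ turning points located (in position order) at $1, 2, \ldots, \nu - 1, \nu + 2$. I would declare the $i$-th of these turning points to carry local degree $d_i = \mu_i + 1$. Since $\sum_{i=1}^{\nu}(d_i - 1) = \sum_{i=1}^{\nu} \mu_i = d - 1$, any polynomial realizing this graded combinatorics has degree $d$, has exactly $\nu$ real critical points with the prescribed multiplicities, and therefore lies in $\cP_d^{\boldsymbol{\mu}}(\R)$; the post-critical finiteness is already built into the combinatorics.

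The one piece of real work is verifying that $\ovra{\rho}_F(\nu)$ is expanding. By condition (2) of Lemma \ref{lemm:PM_combinatorics}, every interior position lies in the forward orbit of some turning point, so the Julia positions are exactly those whose own forward orbit avoids $\Trn(\ovra{\rho}_F(\nu))$. From the explicit description of $\ovra{\rho}_F(\nu)$, these are precisely the boundary $2$-cycle $\{0, n\}$ with $n = \nu + 5$ together with the shifted period-$2$ cycle $\{k_{1,0}, k_{2,0}\}$ inherited from $F_{\ovra{\rho}_0}$, which lands at positions $\nu + 1$ and $\nu + 3$. The four Julia positions $\{0,\,\nu+1,\,\nu+3,\,\nu+5\}$ are pairwise non-adjacent in $\{0, 1, \ldots, n\}$ for every $\nu \geq 2$, so the expansion condition is vacuously satisfied. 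I expect this positional bookkeeping to be the only potential obstacle, but it was anticipated in the construction of Section \ref{subsection_construction_VU_combinatorics}, where the Julia cycles were deliberately spread apart by the shift. Once expansion is established, Theorem \ref{theo:polynomial_realization_Porier} produces the required polynomial $P$, and conditions (VU1)--(VU3) transfer directly from the piecewise linear model $F_{\ovra{\rho}_F(\nu)}$ to $P$ via the order-preserving semiconjugacy on the critical orbits.
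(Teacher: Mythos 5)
You have proved the wrong statement. The theorem you were asked to establish is the characterization itself: a graded combinatorics is realizable by a polynomial if and only if it is expanding. In the paper this is not proved from scratch; it is asserted to be a direct consequence of Poirier's realization theorem (\cite[Theorem 1.1]{Porier2010_Hubbard_trees}), with the paper's own work consisting only of translating between the ``piecewise linear model / Julia--Fatou positions / expanding combinatorics'' language and the Hubbard-tree language of that reference. Your proposal does none of this. Instead, it \emph{invokes} Theorem \ref{theo:polynomial_realization_Porier} as a finished tool and applies it to the specific combinatorics $\ovra{\rho}_F(\nu)$ built in Section \ref{subsection_construction_VU_combinatorics}; that is the argument for Proposition \ref{prop:VU_PCF_polynomial_realization}, not for the theorem you were given. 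As a ``proof'' of Theorem \ref{theo:polynomial_realization_Porier}, your text is circular, since it cites the very statement it is supposed to justify.

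If you want to write a genuine proof of Theorem \ref{theo:polynomial_realization_Porier}, the work required is different in kind from what you did: you must (i) recall Poirier's abstract Hubbard tree realization criterion, (ii) show how a framed piecewise monotone graded combinatorics $(\ovra{\rho},(d_1,\dots,d_\nu))$ on $[0,n]$ determines an abstract real Hubbard tree with $n+1$ marked vertices and the prescribed local degrees at the turning points, and (iii) verify that Poirier's expansivity condition on the Julia vertices of that tree is equivalent to the ``expanding'' condition defined in Section \ref{subsect:polynomial_realization} (namely that any two adjacent Julia-type positions are eventually separated by distance $>1$ under iteration of $F_{\ovra{\rho}}$). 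The converse direction (realizability implies expanding) also needs a sentence, typically via the expansivity of a post-critically finite polynomial on its Julia set. None of this appears in your proposal. The positional bookkeeping you carried out for $\ovra{\rho}_F(\nu)$ is fine and useful for Proposition \ref{prop:VU_PCF_polynomial_realization}, but it is not a proof of the general equivalence.
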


Lets consider the virtually unimodal combinatorics with $\nu$ 
turning points constructed in the previous section. 
We choose the periodic points
$k_{1,0}, k_{2,0}, \ldots, k_{\nu-1,0}$ in such a way that for every 
$i = 1, \ldots, \nu$ the points $k_{i,0}$ and $k_{i+1,0}$ have 
different itineraries (we can choose them as in 
Example \ref{example_VU_combinatorics}, then every turning point land at the
same cycle of period two, but two consecutive turning points have images at 
opposite sides of the dominant turning point). Lets call this combinatorics
$\ovra{\rho}_{\nu} \in \{ 0,1, \ldots, n \}^{n+1}$.

Since every real post-critically finite polynomial restricted to its filled 
Julia set is boundary anchored, we can consider framed combinatorics. 

\begin{lemm}
    \label{lem:expanding_VUC}
    The piecewise monotone combinatorics $\ovra{\rho}_{\nu}$ is expanding.
\end{lemm}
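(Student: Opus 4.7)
My plan is to show that the hypothesis of the expanding condition is never met for $\ovra{\rho}_{\nu}$, so the condition holds vacuously. Specifically, I will argue that the only possible Julia type positions of $\ovra{\rho}_{\nu}$ are the two boundary positions $0$ and $n$, and since these are never consecutive integers, no adjacent pair $(j,j+1)$ consists of two Julia type positions.

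The key input is Lemma \ref{lemm:PM_combinatorics}(2): since $\ovra{\rho}_{\nu}$ is by construction a virtually unimodal combinatorics, it is in particular a piecewise monotone combinatorics, and condition (2) of that lemma gives
\[ \{1,2,\ldots,n-1\} \;\subset\; \bigcup_{c \in \Trn(\ovra{\rho}_{\nu})} \cO_{F_{\ovra{\rho}_{\nu}}}(c). \]
Thus every interior position of $\ovra{\rho}_{\nu}$ lies in the forward orbit of some turning point, i.e.\ is of Fatou type. Concretely, this just reflects the construction of Section \ref{subsection_construction_VU_combinatorics}: each interior position is either one of the new turning points, a position marking a point of the $F_{\ovra{\rho}_0}$-orbit of the original turning point of $\ovra{\rho}_0$ (which gets absorbed into the orbit of the dominant turning point of $\ovra{\rho}_{\nu}$), or a position marking a point of one of the chosen periodic orbits $\cO_{F_{\ovra{\rho}_0}}(k_{i,0})$ (which is contained in the orbit of the corresponding new turning point at position $i$, since that turning point is sent to the mark of $k_{i,0}$ and $k_{i,0}$ is $F_{\ovra{\rho}_0}$-periodic).

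Combining these two observations, for any $j \in \{0,1,\ldots,n-1\}$ at least one of $j$ and $j+1$ belongs to $\{1,\ldots,n-1\}$ and is therefore Fatou, so there are no adjacent Julia pairs and the expanding condition is satisfied vacuously. The argument is essentially bookkeeping, and I do not anticipate a genuine obstacle: the only thing that really needs checking is that the explicit construction of $\ovra{\rho}_{\nu}$ in Section \ref{subsection_construction_VU_combinatorics} does satisfy condition (2) of Lemma \ref{lemm:PM_combinatorics}, which is immediate from how the orbits of the turning points were chosen to exhaust every marked interior position.
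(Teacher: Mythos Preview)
Your argument rests on a misreading of the Fatou/Julia dichotomy. A position $x$ is of Fatou type when the forward orbit $\cO_{F_{\ovra{\rho}}}(x)=\{x,F_{\ovra{\rho}}(x),F_{\ovra{\rho}}^2(x),\ldots\}$ contains a turning point, i.e.\ when $x$ eventually \emph{lands on} a turning point. Lemma~\ref{lemm:PM_combinatorics}(2) gives the reverse containment: every interior position lies in the forward orbit \emph{of} some turning point. These two conditions are not the same, and the second does not imply the first.

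A concrete counterexample: take $\nu=2$, so $\ovra{\rho}_F(2)=(7,3,4,5,6,3,2,0)$ with turning points $\{1,4\}$. Position $3$ lies in the forward orbit of the turning point $1$ (indeed $F_{\ovra{\rho}}(1)=3$), yet its own forward orbit is the $2$-cycle $\{3,5\}$, which contains no turning point; hence $3$ is of Julia type, not Fatou. In general the positions marking the periodic cycles of the $k_{i,0}$ are Julia: they cycle among themselves and never hit a turning point. So interior Julia positions do exist, and the expanding condition is not vacuous in the way you claim.

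The paper's proof instead verifies the expanding inequality directly for the relevant adjacent pairs. It treats everything outside the $3$-cycle of the dominant turning point as (potentially) Julia, and for consecutive positions $j,j+1$ with $1\le j\le \nu-1$ uses that the chosen periodic points $k_{j,0}$ have pairwise distinct itineraries: some iterate puts $F^{m}_{\ovra{\rho}_\nu}(j)$ and $F^{m}_{\ovra{\rho}_\nu}(j+1)$ on opposite sides of the dominant turning point, hence at distance at least $2$. The boundary pair $(0,1)$ is handled separately because $0$ maps to $\{0,n\}$ while $1$ maps into the unimodal core.
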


\begin{proof}
    First, we observe that the set of Fatou points of 
    $\ovra{\rho}_{\nu}$ is given by 
    \[ \cO_{F_{\ovra{\rho}_{\nu}}}(\rho_{\nu+1}) = \{ \rho_{\nu+1}, F_{\ovra{\rho}_{\nu}}(\rho_{\nu +1}), F^2_{\ovra{\rho}_{\nu}}(\rho_{\nu +1}) \}.\] 
    Then, the set of points of Julia type is given by
    $\{ 0,1, \ldots, n \} \setminus \cO_{F_{\ovra{\rho}_{\nu}}}(\rho_{\nu+1}).$ Since
    for every $1 \leq j \leq \nu - 1$ we have that
    $F_{\ovra{\rho}_{\nu}}(j) = k_{j,0}$, and the points 
    $k_{1,0}, k_{2,0}, \ldots, k_{\nu-1,0}$ have pairwise
    distinct itineraries, there exists $m(j) \geq 1$ such that
    \[F_{\ovra{\rho}_{\nu}}^{m(j)}(j) < F_{\ovra{\rho}_{\nu}}(\rho_{\nu +1})
    < F_{\ovra{\rho}_{\ell}}^{m(j)}(j+1),\] for every $1 \leq j \leq \nu-1.$
    Then, 
    \[ |F_{\ovra{\rho}_{\nu}}^{m(j)}(j)- 
    F_{\ovra{\rho}_{\nu}}^{m(j)}(j+1) \geq 2. \] Also, since 
    $\ovra{\rho}_{\nu}$ is framed, we have that 
    $F_{\ovra{\rho}_{\nu}}(0) \in \{0, n+1 \}$. Since, 
    $F_{\ovra{\rho}_{\nu}}(1) \in \langle \rho_{\nu}, \rho_{n-1} \rangle$,
    we have that 
    \[ F_{\ovra{\rho}_{\nu}}(0)- F_{\ovra{\rho}_{\nu}}(1) \geq 2. \]
    This proves that $\ovra{\rho}_{\nu}$ is expanding.
    
\end{proof}


\begin{proof}[ Proof of Proposition \ref{prop:VU_PCF_polynomial_realization}]
    Let $d \geq 3$. Consider the virtually unimodal combinatorics with $\nu$ turning points constructed before $\ovra{\rho}_{\nu}$.

    By Lemma \ref{lem:expanding_VUC} and Theorem 
    \ref{theo:polynomial_realization_Porier}, the graded combinatorics
    $(\ovra{\rho}_{\nu}, (\mu_1+1, \mu_2+1, \ldots, \mu_{\nu}+1))$
    can be realized by a post-critically finite real polynomial map. So
    there exists a post-critically
    finite real polynomial map $P_{\ovra{\rho}_{\nu}}$ with
    \[ \ovra{\varrho} (P_{\ovra{\rho}_{\nu}}) = \ovra{\rho}_{\nu}. \] 
    In particular, $P_{\ovra{\rho}_{\nu}}$ is virtually unimodal and it
    has $\nu$ distinct critical point (marked as the turning points of 
    $\ovra{\rho}_{\nu}$).

    Finally, by construction, the critical points 
    $c_1, c_2, \ldots, c_{\nu}$ have degree
    $\mu_1 + 1, \mu_2+1, \ldots, \mu_{\nu-1}+1$ respectively. So, each
    turning point of $P_{\ovra{\rho}_{\nu}}$, seeing as a critical point,
    have multiplicity $\mu_i$. In particular,
    the degree of $P_{\ovra{\rho}_{\nu}}$ is equal to 
    $ 1 +\sum_{i=1}^{\nu} \mu_i = d$. Thus, $P_{\ovra{\rho}_{\nu}}$ belongs
    to the set $\cP_d^{\boldsymbol{\mu}}(\R)$ .
\end{proof}


\subsection{Holomorphic motion of repellers} \label{subsect:Holomorphic_motion_of_repellers}

Let $P_0$ be a post-critically finite, virtually unimodal polynomial in 
$P_d^{\boldsymbol{\mu}}(\R)$ as constructed above. So, $P_0$ has $\nu$ 
critical points $c_{1,0}< c_{2,0}< \ldots < c_{\nu,0}$ of multiplicity
$\mu_1, \mu_2, \ldots, \mu_{\nu}$. Put 
\[ \vec{c}_0 = (c_{1,0}, c_{2,0}, \ldots, c_{\nu,0}), \hspace{0.3cm} 
\text{ and } \hspace{0.3cm}
\vec{v}_0 = (v_{1,0}, v_{2,0}, \ldots, v_{\nu,0}),\] where 
$v_{i,0} \= P_0(c_{i,0})$ for all $1 \leq i \leq \nu$. For 
$P \in P_d^{\boldsymbol{\mu}}(\R)$ put
\[ \vec{c}(P) = (c_{1}(P), c_{2}(P), \ldots, c_{\nu}(P)), \hspace{0.3cm} 
\text{ and } \hspace{0.3cm}
\vec{v}(P) = (v_{1}(P), v_{2}(P), \ldots, v_{\nu}(P)),\] as the vectors of
critical points and critical values of $P$ respectively, with
\[c_1(P) < c_2(P) < \ldots < c_{\nu}(P).\]

We start by defining a suitable set to study how the critical orbits of 
$P_0$ bifurcates. All the polynomials we will consider in this section 
will have critical points with bounded orbits. Thus, their filled Julia set 
is connected (observe that this is true for the virtually unimodal maps).
Moreover, conjugating with a real affine map, we may assume that the real
filled Julia set is equal to $[0,1]$. Let $\cP_0(\C)$ be the family of 
all complex polynomials $P$ of degree $d$ with $\nu$ distinct critical 
points $\{ c_1(P), \ldots, c_{\nu}(P)\}$ of multiplicity 
$\mu_1, \ldots, \mu_{\nu}$ respectively (when the critical points are all
real we will consider them in increasing order), with $P(0)= P_0(0)$ and
$P(1)=P_0(1)$ (if $\Crit (P) \subset \R$ this last condition implies that 
$P$ has the same shape as $P_0$). 
We denote by $\cP_0(\R)$ the 
subfamily of $\cP_0(\C)$ consisting of polynomials with real coefficients
and $\cU$ the subfamily of $\cP_0(\R)$ of those maps with only real 
critical points.

It is well known that $\cP_0(\C)$ is a complex manifold
of dimension $\nu$, and the critical values are holomorphic coordinated
(See \cite{Levin_Multipliers_of_periodic_orbits}). We will use the following
proposition.

\begin{prop}[\cite{Levin_Shen_van_Strien_Transversality_elementary_proof}]
    There exists a neighborhood $W$ of $\vec{v}_0$ in $\C^{\nu}$ such that 
    the map $F \colon W \longrightarrow \cP_0(\C)$ 
    assigning to each $w \in W$ a polynomial 
    $P_w \in \cP_0(\C)$ with $\vec{v}(P_w) = w$, is a 
    biholomorhpism onto a neighborhood of $P_0$. In particular, the critical
    points of $P_w$ depend holomorphically on $w$.
\end{prop}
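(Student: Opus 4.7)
The plan is to obtain $F$ as the local inverse of the critical-value map $\Phi \colon \cP_0(\C) \longrightarrow \C^\nu$, $\Phi(P) = \vec{v}(P)$, via the holomorphic inverse function theorem. I would proceed in three steps.

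First, as noted in the text and in \cite{Levin_Multipliers_of_periodic_orbits}, $\cP_0(\C)$ is a complex manifold of dimension $\nu$; concretely it is cut out inside the $(\nu+2)$-parameter family $P'(z) = a\prod_{i=1}^{\nu}(z-c_i)^{\mu_i}$, $P(z) = a_0 + \int_0^z P'(\zeta)\,d\zeta$ by the two normalisations $P(0)=P_0(0)$ and $P(1)=P_0(1)$, whose differentials at $P_0$ are linearly independent. On this manifold the $\nu$ critical points of $P$ are automatically holomorphic functions of $P$, since they are roots of $P'$ of persistent multiplicity and thus vary holomorphically under perturbation by the implicit function theorem. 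Consequently $\Phi$ is a holomorphic map between two complex manifolds of the same dimension $\nu$.

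Second, and this is the crux of the proposition, I would show that the differential $d\Phi_{P_0} \colon T_{P_0}\cP_0(\C) \longrightarrow \C^{\nu}$ is an isomorphism. Since both sides are $\nu$-dimensional, injectivity is enough: one must show that no nonzero deformation of $P_0$ inside $\cP_0(\C)$ preserves all $\nu$ critical values simultaneously. This is precisely the transversality statement proved by Levin, Shen and van Strien in \cite{Levin_Shen_van_Strien_Transversality_elementary_proof}, which applies directly to the present parameter space, and I would invoke it as a black box. The obstacle here is substantive: a priori, the critical orbits of $P_0$ could satisfy hidden holomorphic relations causing the critical-value differentials to become linearly dependent, and ruling this out is the heart of the cited transversality argument.

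Third, with $d\Phi_{P_0}$ invertible, the holomorphic inverse function theorem furnishes a neighborhood $U$ of $P_0$ in $\cP_0(\C)$ and a neighborhood $W$ of $\vec{v}_0$ in $\C^{\nu}$ such that $\Phi|_{U} \colon U \to W$ is a biholomorphism. Setting $F \= (\Phi|_{U})^{-1} \colon W \to U$ yields the required biholomorphism. The last clause is then automatic: by the first step, each $c_i$ is a holomorphic function on $\cP_0(\C)$, and hence $c_i(P_w) = (c_i \circ F)(w)$ depends holomorphically on $w$.
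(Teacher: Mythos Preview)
The paper does not supply its own proof of this proposition; it is quoted as a result from \cite{Levin_Shen_van_Strien_Transversality_elementary_proof}, with the complex-manifold structure and the fact that critical values are holomorphic coordinates already attributed just above to \cite{Levin_Multipliers_of_periodic_orbits}. Your three-step outline --- manifold structure and holomorphic dependence of the critical points, nondegeneracy of the critical-value differential, and the holomorphic inverse function theorem --- is exactly the standard route to this statement, and your identification of step two as the substantive input imported from the cited references is accurate.
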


In the previous proposition, we can consider $W$ to be a polydisk centered
at $\vec{v}_0 \in \R^{\nu}$ and radius some $\e > 0$ small enough, whose
image is an open neighborhood of $P_0$. In this case, the cube 
\[ C(\vec{v}_0, \e) \= \{ x\in \R^{\nu} \colon |x_i - v_{i,0}| < \e  
\text{ for all } i = 1,2, \ldots, \nu \}, \] is contained in $W$, and 
$F(W) \cap \cP_0(\R) \neq \emptyset$. Now, let 
$\cU \subset \C(\vec{v}_0,\e)$ be 
so that $P_w \in \cP_0(\R)$ for every $w \in \cU$. 
Now, for each $w \in \cU$ every $0 \leq i < \nu$, put 
$I_{i,w} \= [c_i(P_w),c_{i+1}(P_w)]$ where 
$c_0(P_w) =0$ and $c_{\nu+1}(P_w) =1$. Then, since for $P_0$ all the 
critical values are in $(P_0^2(c_{\nu,0}, P_0(c_{\nu,0}))$, we can consider
$W$ small enough so that, for every $w \in \cU$ the map $P_w$ is virtually 
unimodal with dominant turning point $c_{\nu}(P_w)$, and it has an
attracting periodic cycle of period three
in $I_{\nu-1,w} \cup I_{\nu,w}$, for which the interior of the convex hull
of its orbit contains the single critical points $c_{\nu}(P_w)$.

Lets denote by $I_w$ the convex hull of this attracting periodic orbit of 
period three. So we have $I_w \subset I_{\nu-1,w} \cup I_{\nu,w}$.


With this considerations we will study how the repeller 
$\Lambda (P_0) \cap (I_{\nu-1} \cup I_{\nu})$ deforms in the neighborhood
$F(\cU)$ of $P_0$ in $\cP_0(\R)$.
Let $E$ be a subset of $\CC$. A \emph{holomorphic motion} of $E$ parametrized
by a pointed complex manifold $(W, \boldsymbol{v}_0)$ is a function
\[ h \colon W \times E \longrightarrow \CC \] such that 
\begin{enumerate}
    \item $h(\boldsymbol{v}_0,z) = z$ for all $z \in E$,
    \item for every $w \in W$, the map $z \mapsto h(w,z)$ is
    injective, and 
    \item for every $z \in E$ the map $w \mapsto h(w,z)$ is
    holomorphic for $w \in W$.
\end{enumerate}


\begin{prop}
    \label{prop:conjugacy_with_Fib_SFT}
    For every $w \in \cU$, the set $\Lambda(P_w) \cap I_w$ is a 
    Cantor set and the action of $P_w$ restricted to it is topologically 
    conjugated to the action of the left shift map $\sigma$ on $\SFib$. 
    Moreover, there is an holomorphic motion $h$ of 
    $\Lambda(P_0) \cap (P_0^2(c_{\nu,0}),P_0^2(c_{\nu,0}))$ defined on 
    $(W,\vec{v}_0)$ such that for every $w \in \cU$ we have that
    \begin{equation}
        \label{eq:real_holomorphic_motion}
        h(w,\Lambda(P_0) \cap (P_0^2(c_{\nu,0}),P_0^2(c_{\nu,0}))
    = \Lambda(P_w) \cap I_w .
    \end{equation}
\end{prop}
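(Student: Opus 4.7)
\emph{Plan of proof.} The strategy is to realize $\Lambda(P_w)\cap I_w$ as the limit set of an iterated function system (IFS) of holomorphic inverse branches of $P_w$ indexed by $\SFib$, and then to deduce the holomorphic motion from the holomorphic dependence of these branches on $w\in W$.

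First I set up a Markov partition. Write $a_w := P_w^{2}(c_{\nu}(P_w))$, $v_w := P_w(c_{\nu}(P_w))$, $J_{0,w}:=\langle a_w, c_{\nu}(P_w)\rangle$ and $J_{1,w}:=\langle c_{\nu}(P_w), v_w\rangle$, so that $I_w = J_{0,w}\cup J_{1,w}$. Since $P_w|_{I_w}$ is unimodal with turning point $c_{\nu}(P_w)$ periodic of period three, a direct check yields
\[
P_w(J_{0,w})=J_{1,w}, \qquad P_w(J_{1,w})=I_w,
\]
and the associated transition matrix is the adjacency matrix of the Fibonacci graph of Figure~\ref{fig:Fibonnaci_graph}. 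Assigning to each $x\in I_w$ that avoids $c_{\nu}(P_w)$ the itinerary $\iota_w(x)\in\{1,2\}^{\N_0}$ by $\iota_w(x)_k=1$ if $P_w^k(x)\in J_{0,w}$ and $\iota_w(x)_k=2$ otherwise, the Markov structure forces $\iota_w(x)\in\SFib$ and $\iota_w\circ P_w=\sigma\circ\iota_w$. Any $x\in\Lambda(P_w)\cap I_w$ must avoid $c_{\nu}(P_w)$ (else its orbit would meet the attracting cycle and $x$ would belong to $\cB(P_w)$), so $\iota_w$ is well-defined on $\Lambda(P_w)\cap I_w$.

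Next, the IFS. The two monotone branches of $P_w|_{I_w}$ give inverse branches $g_{0,w}$ and $g_{1,w}$, inverting $P_w|_{J_{0,w}}$ and $P_w|_{J_{1,w}}$ respectively, which extend to holomorphic maps on appropriate complex slit domains around $J_{1,w}$ and $I_w$. Lemma~\ref{lemm:Schwarz_Lemma} forces each $g_{i,w}$ to send a geodesic neighborhood of its domain interval strictly inside one of its image, and by Pick's Theorem each is a strict contraction in the hyperbolic metric. Exploiting the Fibonacci admissibility of $\bm{\omega}\in\SFib$ to set up a common simply-connected complex neighborhood on which the relevant compositions act as strict contractions, Lemma~\ref{lemm:uniform_contraction} yields uniform convergence of the compositions $g_{\omega_0-1,w}\circ g_{\omega_1-1,w}\circ\cdots\circ g_{\omega_n-1,w}$ to a constant $h(w,\bm{\omega})\in I_w$. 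This limit has itinerary $\bm{\omega}$ and so lies in $\Lambda(P_w)\cap I_w$; conversely any point of $\Lambda(P_w)\cap I_w$ is recovered from its itinerary as such a limit. Hence $h(w,\cdot)\colon\SFib\to\Lambda(P_w)\cap I_w$ is a continuous bijection conjugating $\sigma$ with $P_w$, and, since $\SFib$ is a Cantor set, so is $\Lambda(P_w)\cap I_w$.

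For the holomorphic motion, $P_w$, its critical points, and hence the inverse branches $g_{i,w}$ all depend holomorphically on $w\in W$. Each finite composition above is therefore holomorphic in $w$, and the convergence in Lemma~\ref{lemm:uniform_contraction} is uniform in $w$ on a neighborhood of $\vec v_0$, so $w\mapsto h(w,\bm{\omega})$ is holomorphic for every $\bm{\omega}\in\SFib$. Reparametrizing via $h(\vec v_0,\cdot)^{-1}$ to index the motion by $\Lambda(P_0)\cap I_0$ produces the required holomorphic motion satisfying \eqref{eq:real_holomorphic_motion}. The main obstacle is arranging for the two inverse branches to be simultaneously defined and strictly contracting on a fixed simply-connected complex domain (so that Lemma~\ref{lemm:uniform_contraction} is applicable) with a contraction rate uniform in both $w$ and $\bm{\omega}$; the two branches share the endpoint $c_{\nu}(P_w)$ of their real domain intervals, at which $P_w'$ vanishes, and $v_w$ is a critical value lying on the boundary of the natural slit plane, so one must choose the geodesic neighborhoods carefully and use the Fibonacci constraint (no two consecutive visits to $J_{0,w}$) to ensure admissible orbits never leave the contraction region.
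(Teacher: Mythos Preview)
Your overall strategy matches the paper's: build an IFS of holomorphic inverse branches indexed by $\SFib$, use the Schwarz--Pick contraction (Lemma~\ref{lemm:Schwarz_Lemma} and Lemma~\ref{lemm:uniform_contraction}) to get a conjugacy, and then read off the holomorphic motion from the holomorphic dependence of the branches on $w$. However, the step you flag as ``the main obstacle'' is precisely where almost all of the paper's work goes, and your proposal does not actually resolve it.

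Concretely, with your branches $g_{0,w}\colon J_{1,w}\to J_{0,w}$ and $g_{1,w}\colon I_w\to J_{1,w}$ there is no single simply connected domain $D$ with $g_{i,w}(\overline{D})\subset D$ for both $i$: the image intervals $J_{0,w}$ and $J_{1,w}$ share the critical point $c_\nu(P_w)$, and $J_{1,w}$ is not compactly contained in $I_w$ (they share the critical value $v_w$). So the hypothesis of Lemma~\ref{lemm:uniform_contraction} fails outright, and the Schwarz Lemma gives only weak, not strict, contraction. The paper fixes this in two moves. First, it replaces your pair $(g_{0,w},g_{1,w})$ by $\phi_{w,1}:=(P_w^2|_{J_{1,w}})^{-1}$ and $\phi_{w,2}:=(P_w|_{J_{2,w}})^{-1}$, using the \emph{second} iterate for the left branch; this automatically encodes the Fibonacci constraint (each occurrence of the word $12$ becomes a single application of $\phi_{w,1}$, handled by an auxiliary recoding map $\vee$) and, more importantly, puts both branches on a common domain. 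Second, it shrinks the natural Markov intervals by choosing auxiliary points $\ell_0,\ell_1$ in the immediate basin of the attracting $3$-cycle, producing intervals $J_{1,w},J_{2,w}$ with disjoint closures, compactly contained in a common interval $J_w$, and bounded away from $\Crit(P_w)\cup P_w(\Crit(P_w))$. Only after this do the geodesic neighborhoods satisfy $\overline{\phi_{w,i}(D_\theta(J_w))}\subset D_\theta(J_w)$, and only then does Lemma~\ref{lemm:uniform_contraction} apply (uniformly in $w$, which is needed for the holomorphic motion). Your proposal names the difficulty but supplies neither of these mechanisms, so as written it does not go through.
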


\begin{proof}
    Recall that the
    the shape of $P_0$ restricted to 
    $I_{\nu -1,\vec{v}_0} \cup I_{\nu,\vec{v}_0}$ is $(+,-)$.
    In this case, we have $ P_0^2(c_{\nu,0}) < c_{\nu,0} < P_0(c_{\nu,0})$,
    and $I_{\vec{v}_0}= [P_0^2(c_{\nu,0}), P_0(c_{\nu,0})]$. 
    Put $I_1 \= [P_0(c_{\nu,0}), c_{\nu,0}]$, and
    $I_2 \= [c_{\nu,0}, P_0(c_{\nu,0})]$, so $P_0(I_1) = I_2$, and 
    $P_0(I_2) = I$. We will say that a point
    $x \in I_i \setminus \cO_{P_0}(c_{\nu,0})$ has a 
    \emph{symmetric with respect to $c_{\nu,0}$}, or just that it has a symmetric,
    if there is a $\hat{x} \in I \setminus I_i$ such that 
    $P_0(\hat{x}) = P_0(x)$. Observe that every 
    $x \in \interior (I_1)$ 
    has a symmetric, but if $\tilde{c}_{\nu,0} \in I_2$ is so that 
    $P_0(\tilde{c}_{\nu,0}) = c_{\nu,0}$, then no point in 
    $[\tilde{c}_{\nu,0} , P_0(c_{\nu,0}))$ has a symmetric.

    The immediate basin of attraction of 
    $\cO_{P_0}(c_{\nu,0})$ restricted to $I_{\vec{v}_0}$ is the union of the
    three intervals $[P_0^2(c_{\nu,0}), p_2)$, $(\hat{p}_0, p_0)$, and 
    $(p_1, P_0(c_{\nu,0})]$, 
    where $p_0$ is a periodic point of period three with 
    $P_0(p_0) = p_1$, and $P_0^2(p_0) = p_2$, and 
    $\hat{p}_0$ is the symmetric of $p_0$. 

    First, we will construct the inverse branches of $P_0$ and $P_0^2$
    on a suitable subinterval of $I_{\vec{v}_0}$. This construction will
    extend to $P_w$ with $w$ in $\cU$.

    Since $c_{\nu,0}$ is periodic of period three, 
    $\Crit (P_0) \cap \interior(I_{\vec{v}_0}) = \{c_{\nu,0}\}$, and 
    $\partial I_{\vec{v}_0} \subset \cO_{P_0}(c_{\nu,0})$, we have that, 
    $P_0^3$ is a bijection from the interval $[P_0^2(c_{\nu,0}), p_2]$ onto
    itself, and $P_0^2(c_{\nu,0})$ is an attracting fixed point of $P_0^3$.
    Since $[P_0^2(c_{\nu,0}), p_2)$ is contained in 
    $\widehat{\cB}_{P_0}(c_{\nu,0})$,
    there are no fixed points of $P^3_0$ in its interior. So
    for all $x \in (P^2_0(c_{\nu,0}), p_2) $, we have that $P_0^3(x) < x$.

    Fix $\ell_0 \in (P_0^2(c_{\nu,0}),p_2)$. We have 
    \[ P_0(\ell_0) \in (c_{\nu,0}, p_0), \hspace{0.6cm} 
    P_0^2(\ell_0) \in (p_1,P_0(c_{\nu,0})), \hspace{0.3cm}
    \text{ and } \hspace{0.3cm} 
    P_0^3(\ell_0) \in (P_0^2(c_{\nu,0}), \ell_0). \]
    Let $\ell_1 \in (P_0^3(\ell_0), \ell_0)$. Then, 
    \[P_0(\ell_1) \in (c_{\nu,0}, P_0(\ell_0)), \hspace{0.5cm}  \text{ and }
    \hspace{0.5cm}
    P_0^2(\ell_1) \in (P_0^2(\ell_0), P_0(c_{\nu,0})).\]

    Let $\widehat{P_0(\ell_0)}$ be the symmetric of $P_0(\ell_0)$, and put 
    \[ J_{\vec{v}_0} \= (P_0^3(\ell_0), P_0^2(\ell_1)),  \hspace{0.5cm}
    J_{1,\vec{v}_0} \= (\ell_1, \widehat{P_0(\ell_0)}), \hspace{0.5cm} 
    \text{ and } 
    \hspace{0.5cm} J_{2,\vec{v}_0} \= (P_0(\ell_1), P_0^2(\ell_0)).\] Then, 
    $\overline{J_{i,\vec{v}_0}} \subset J_{\vec{v}_0}$, 
    $c_{\nu,0} \notin \overline{J_{i,\vec{v}_0}}$ for 
    $i = 1,2$, 
    $\overline{J_{1,\vec{v}_0}} \cap \overline{J_{2,\vec{v}_0}} = \emptyset$, 
    $P_0(J_{1,\vec{v}}) = J_{2,\vec{v}_0}$, and 
    $P_0(J_{2,\vec{v}_0}) = J_{\vec{v}_0}$, see Figure \ref{fig:intervals_Js}.

    Since \[\overline{J_{i,\vec{v}_0}} \cap (\Crit (P_0) \cup 
    P_0(\Crit (P_0)) = \emptyset,\] 
    the restrictions 
    \[ P_0|_{J_{2,\vec{v}_0}} \colon J_{2,\vec{v}_0} \longrightarrow J_{\vec{v}_0} \hspace{0.5cm} 
    \text{ and } \hspace{0.5cm} P_0^2|_{J_{1,\vec{v}_0}} \colon J_{1,\vec{v}_0} \longrightarrow
    J_{\vec{v}_0},\]
    are diffeomorphisms. So, the inverse branches of $P_0$ and $P_0^2$
    restricted to $J_{\vec{v}_0}$, are well defined. Put 
    \[ \phi_{0,2} \colon J_{\vec{v}_0} \longrightarrow J_{2,\vec{v}_0} \hspace{0.5cm} 
    \text{ and } \hspace{0.5cm} \phi_{0,1} \colon J_{\vec{v}_0} \longrightarrow
    J_{1,\vec{v}_0},\] with $\phi_{0,1} \= (P_0^2|_{J_{1,\vec{v}_0}})^{-1}$,
    and $\phi_{0,2} \= (P_0|_{J_{2,\vec{v}_0}})^{-1}$. Also, since the critical 
    points of $P_0$ are all real, the inverse branches, $\phi_{0,2}$ and $\phi_{0,1}$ extend uniquely to $\C_{J_{\vec{v}_0}}$. So 
    \[ \phi_{0,1} \colon \C_{J_{\vec{v}_0}} \longrightarrow \C_{J_{1,\vec{v}_0}} \hspace{0.5cm} 
    \text{ and } \hspace{0.5cm} \phi_{0,2} \colon \C_{J_{\vec{v}_0}} 
    \longrightarrow \C_{J_{2,\vec{v}_0}},\] are holomorphic. 

    Fix $\theta \in (0, \pi)$. 
    By the definition of $J_{1,\vec{v}_0}$ and $J_{2,\vec{v}_0}$, we have
    that 
    \[ \phi_{0,1}(J) = J_{1,\vec{v}_0} \subset J_{\vec{v}_0} \hspace{0.5cm}
    \text{ and } \hspace{0.5cm} 
    \phi_{0,2}(J_{\vec{v}_0}) = J_{2,\vec{v}_0} \subset J_{\vec{v}_0}.\] Then, by
    Lemma \ref{lemm:Schwarz_Lemma}, we have that 
    \begin{equation}
        \label{eq:Schwarz_Lemma_hypothesis_for_phi_{s_0,i}}
        \phi_{0,i}(D_{\theta}(J_{\vec{v}_0})) \subset D_{\theta}(J_{i,\vec{v}_0}) \subset 
        D_{\theta}(J_{\vec{v}_0}). 
    \end{equation}
    Moreover, since $\overline{J_{i,\vec{v}_0}} \subset J_{\vec{v}_0}$, 
    we have
    \[ \overline{D_{\theta}(J_{i,\vec{v}_0})} \subset D_{\theta}(J_{\vec{v}_0}). \] So   
    \begin{equation}
    \label{eq:Schwarz-Pick_hypohtesis_for_phi_{s_0,i}}
    \overline{\phi_{0,i}(D_{\theta}(J_{\vec{v}_0}))} \subset \overline{D_{\theta}(J_{i,\vec{v}_0})}
    \subset D_{\theta}(J_{\vec{v}_0}).
    \end{equation}
    Consider open intervals $K_{1,\vec{v}_0}$ and $K_{2,\vec{v}_0}$ satisfying the following:
    \begin{enumerate}
        \item $\overline{K_{i,\vec{v}_0}} \subset J_{\vec{v}_0}$, for 
        $i = 1,2$;
        \item $\overline{J_{i,\vec{v}_0}} \subset K_{i,\vec{v}_0}$, for 
        $i =1,2$;
        \item $\overline{K_{1,\vec{v}_0}} \cap \overline{K_{2,\vec{v}_0}} = \emptyset$;
        \item $ c_{\nu,0} \notin \overline{K_{1,\vec{v}_0}} \cup \overline{K_{2,\vec{v}_0}}$.
    \end{enumerate}
    In particular, for $i = 1,2$
    \[\overline{D_{\theta}(J_{i,\bf{v}_0})} \subset D_{\theta}(K_{i,\vec{v}_0}) \hspace{0.5cm}
    \text{ and } 
    \hspace{0.5cm} \overline{D_{\theta}(K_{i,\vec{v}_0})} \subset D_{\theta}(J_{\vec{v}_0}).\]
    So, by \eqref{eq:Schwarz-Pick_hypohtesis_for_phi_{s_0,i}}
    \begin{equation}
        \label{eq:Image_of_D_theta_K_i}
        \overline{D_{\theta}(J_{\vec{v}_0})} \subset P_0^2(D_{\theta}(K_{1,\vec{v}_0})) 
        \hspace{0.5cm} \text{ and } \hspace{0.5cm}
        \overline{D_{\theta}(J_{\vec{v}_0})} \subset P_0(D_{\theta}(K_{2,\vec{v}_0})).
    \end{equation}    

    Using that  
    \[ (\Crit (P_0) \cup P_0(\Crit (P_0)) \cap 
    \left(\overline{D_{\theta}(K_{1,\vec{v}_0})} \cup \overline{D_{\theta}(K_{2,\vec{v}_0})} \right) 
    = \emptyset,  \] \eqref{eq:Image_of_D_theta_K_i}, and the Open Mapping 
    Theorem, we can choose $W$ to be small enough so that 
    for every $w \in W$ the previous construction carry over. Denoting
    the corresponding sets $I_w$, $J_w$, $K_{1,w}$, and $K_{2,w}$ for every 
    $w \in W$, we have
    \begin{equation}
        \label{eq:f_s_containsD(J)}
      \overline{D_{\theta}(J_w)} \subset P_w^2(D_{\theta}(K_{1,w})) 
    \hspace{0.5cm} \text{ and } \hspace{0.5cm}
    \overline{D_{\theta}(J_w)} \subset P_w(D_{\theta}(K_{2,w})),  
    \end{equation}
    and
    \begin{equation}
        \label{eq:no_crit_in_D(K_i)}
        (\Crit (P_w) \cup P_w(\Crit (P_w)) \cap 
    \left(\overline{D_{\theta}(K_{1,w)}} \cup \overline{D_{\theta}(K_{2,w})} \right) 
    = \emptyset.
    \end{equation}
    Additionally, if $w \in \cU$, then 
    $I_w \setminus \ov{K_{1,w}} \cup 
    \ov{K_{2,w}}$ intersects a unique attracting cycle of period three and 
    $\ov{K_{1},w}$ and $\ov{K_{2,w}}$ intersect each component of the 
    immediate basin
    of this attracting cycle. These two conditions are to ensure that, 
    for all $w \in \cU$, we have 
    $\Lambda (P_w) \cap I_w \subset K_{1,w} \cup K_{2,w}$, thus 
    \[ \Lambda (P_w) \cap I_w = \{ x \in I_w \colon P_w^n(x) \in K_{1,w}
    \cup K_{2,w} \text{ for all } n \geq 0 \}.\]

    Conditions \eqref{eq:f_s_containsD(J)} and \eqref{eq:no_crit_in_D(K_i)}
    allow us to define the inverse branches of $P_w$ and 
    $P_w^2$ restricted to $D_{\theta}(K_{1,w})$ and $D_{\theta}(K_{2,w})$ 
    respectively, for every $w \in W$.

    Put 
    \[ \phi_{w,1} \= (P_w^2|_{D_{\theta}(K_{1,w})})^{-1} \colon D_{\theta}
    (J_w) \longrightarrow D_{\theta}(K_{1,w}) \]  and 
    \[ \phi_{w,2} \= (P_w|_{D_{\theta}(K_{2,w})})^{-1} \colon D_{\theta}(J_w)
    \longrightarrow D_{\theta}(K_{2,w}).\]
    Now, for $i = 1,2$, and every $w \in W$, we have 
    \begin{equation}
    \label{eq:Unif_Cauchy_hyp}
    \overline{\phi_{w,i}(D_{\theta}(J_w))} \subset \overline{D_{\theta}
    (K_{i,w})} \subset D_{\theta}(J_w).
    \end{equation}
    Thus, the image of $D_{\theta}(J_w)$
    under $\phi_{w,i}$ is compactly contained in $D_{\theta}(J_w)$. Then, 
    for every $w \in W$, the maps $\phi_{w,1}$ and $\phi_{w,2}$ 
    satisfies the hypothesis of Lemma \ref{lemm:uniform_contraction}. So, 
    for every $\bf{u} = (u_i)_{i \geq 0} \in \{ 1,2 \}^{\N_0}$ and 
    $k \geq 0$, if we put 
    \[\phi_{w,\bf{u}|_{[0,k]}} \= \phi_{w,u_0} \circ \phi_{w,u_1} \circ 
    \ldots \circ \phi_{w,u_k},\] then 
    $\{ \phi_{w,\bf{u}|_{[0,k]}} \}_{k \geq 0}$ converges
    uniformly on $D_{\theta}(J_w)$ to a constant function. We write 
    $\phi_{w,\bf{u}}$ for its limit, and $x_w(\bf{u})$ for the value of 
    $\phi_{w,\bf{u}}$ in $D_{\theta}(J_w)$.

    Now we will study how these inverse branches behave under composition.
    First, we define the map
    $\vee \colon \cL_{\text{Fib}} \longrightarrow \{ 1,2\}^*$ in the 
    following way: If the length of $u \in \cL_{\text{Fib}}$ is one (thus 
    $u \in \{1,2\}$), $\vee(u) = u$.
    For $u = u_0u_1 \ldots u_n \in \cL_{\text{Fib}}$ with $n > 1$, we
    read it from left to right, if $i < n-1$ and $u_i =1$, then $u_{i+1} =2$
    and $\vee$ replaces $u_iu_{i+1}$ by $1$, and we continue reading 
    $u_{i+2}$. If $u_i = 2$ then $\vee$ keeps the symbol and continue
    reading $u_{i+1}$. The map $\vee$ stops once it moves beyond the last
    symbol. If $\vee$ reads $u_n$, then it keeps it.

    Fix $w \in W$ and $\bf{u} = (u_i)_{i \geq 0} \in \SFib$. 
    Since, for $i \in \{ 1,2\}$, and every $x \in D_{\theta}(J_w)$ we have 
    that $\phi_{w,i}(x) \in D_{\theta}(K_{i,w})$, and 
    $P_w(\phi_{w,1}(x)) \in D_{\theta}(K_{2,w})$, it follows that, for every 
    $k \geq 0$
    \begin{equation}
    \label{eq:finite_itineraries_using_inverse_branches}
    \phi_{w,\vee(\bf{u}|_{[0,k]})}(D_{\theta}(J_w))=\{ x \in D_{\theta}(J_w)
    \colon 
    P_w^i(x) \in D_{\theta}(J_{u_i,w}) \text{ for } i = 0,1, \ldots, k \}.
    \end{equation}
    Moreover, since for every $k,n \in \N_0$ with $n \leq k$ we have 
    \begin{equation}
        \vee(\bf{u}|_{[0,n]})|_{\{i\}} = \vee(\bf{u}|_{[0,k]})|_{\{i\}},
    \end{equation}
    for $i = 0,1, \ldots, |\vee(\bf{u}|_{[0,n]})|-1$. Then
    \begin{align*}
        \phi_{w,\vee(\bf{u}|_{[0,k]})}(D_{\theta}(J_w)) & = 
        \{ x \in D_{\theta}(J_w) \colon P_0^i(x) \in D_{\theta}(K_{u_i,w}) 
        \text{ for } i = 1,2, \ldots, k-1 \} \\
        &\subseteq \{ x \in D_{\theta}(J_w) \colon P_0^i(x) \in 
        D_{\theta}(K_{u_i,w}) \text{ for } i = 1,2, \ldots, n-1 \} \\
        &= \phi_{s, \vee(\bf{u}|_{[0,n]})}(D_{\theta}(J_w)).
    \end{align*}
    In particular, for all $n \in \N_0$, we have
    \begin{equation}
        \label{eq:nested_sequence}
        \phi_{w,\vee(\bf{u}|_{[0,n+1]})}(D_{\theta}(J_w)) \subseteq 
        \phi_{w,\vee(\bf{u}|_{[0,n]})}(D_{\theta}(J_w)).
    \end{equation}

    Now we prove that the action of $P_0$ restricted to 
    $\Lambda (P_w) \cap I_0$ is conjugated to the action of $\sigma$ on 
    $\SFib$. First, observe that 
    $I_0 \setminus (K_1 \cup K_2)$ contains $\cO_{P_0}(c)$, it has nonempty
    interior and is contained in the immediate basin of 
    $\cO_{P_0}(c_{\nu,0})$.
    It follows that  \[ (\Lambda(P_0) \cap I) \subset K_1 \cup K_2. \]
    Thus, $x \in \Lambda (P_0) \cap I_0$ if and only if 
    $\cO_{P_0}(x) \subset K_1 \cup K_2.$ Then, for every 
    $n \geq 1$
    \begin{equation}
    \label{eq:contention_of_the_repppeller_under_inverse_branches}
        (\Lambda(P_0) \cap I_0) \subset \bigcup_{u \in \cL_{\text{Fib}(n)}} 
        \phi_{0,\vee(u)} (J_0),
    \end{equation}
    Thus $x_{0}(\bf{u}) \in K_1 \cup K_2$. By 
    \eqref{eq:contention_of_the_repppeller_under_inverse_branches}, 
    $x_{0}(\bf{u}) \in \Lambda(P_0) \cap I_0$, and by 
    \eqref{eq:finite_itineraries_using_inverse_branches} we have that the 
    itinerary of $x_{0}(\bf{u})$ under $P_0$ is equal to 
    $(I_{u_i})_{i \geq 0}$. So we can define the map
    \begin{align*}
        \psi \colon \SFib &\longrightarrow \Lambda(P_0) \cap I \\
        \bf{u} &\mapsto x_{0}(\bf{u}).
    \end{align*}
    This map is injective since if $\bf{u}, \bf{z} \in \SFib$ with 
    $\bf{u} \neq \bf{z}$,
    then $u_i \neq z_i$ for some $i \geq 0$. So, 
    $J_{u_i} \cap J_{z_i} = \emptyset$, and  
    $P_0^i(x_{0}(\bf{u})) \in J_{u_i}$ and 
    $P_0^i(x_0(\bf{z})) \in J_{z_i}$. This implies that 
    $P_0^i(x_{0}(\bf{u})) \neq P_0^i(x_{0}(\bf{z}))$. It is 
    onto since for any $x \in \Lambda (P_0) \cap I_0$ with itinerary 
    $(I_{u_i})_{i \geq 0}$, then $(u_i)_{i \geq 0} \in \SFib$, and 
    \begin{equation}
        \label{eq:identyti_of_the_itinerary}
        \cI(x_{0}((u_i)_{i \geq 1})) = (I_{u_i})_{i \geq 0} = \cI(x).
    \end{equation}
    This implies that $x_{0}((u_i)_{i \geq 0}) = x$. Thus, $\psi$ is a bijection.
    
    To prove the continuity of $\psi$, first observe that 
    \eqref{eq:contention_of_the_repppeller_under_inverse_branches} implies 
    that for $u = u_0 \ldots u_{n-1} \in \cL_{\text{Fib}}(n)$ we have 
    \[P_0^i(\phi_{0,\vee(u)}(J)) \subset J_{u_i},\] for 
    $i = 0, \ldots, n-1.$
    If $u = u_0 \ldots u_{n-1}$ and $z = z_0, \ldots, z_{n-1}$ are in 
    $\cL_{\text{Fib}}(n)$ with $u \neq z$, thus $u_i \neq z_i$ for some 
    $i \in \{ 0,1, \ldots, n-1 \}$, then 
    \[ P_0^i(\phi_{0,\vee(u)}(J_0)) \cap P_0^i(\phi_{0,\vee(z)}(J_0))
    = \emptyset.\] This implies that 
    $\phi_{0,\vee(u)}(J_0) \cap \phi_{0,\vee(z)}(J_0) = \emptyset$. So the 
    number of connected components of 
    \[ \bigcup_{u \in \cL_{\text{Fib}}(n)} \phi_{0,\vee(u)}(J_0)  \] is equal
    to the nth Fibonacci number $\ell_n$. Put 
    \[ d_n \= \max_{u \in \cL_{\text{Fib}(n)}} \diam_{D_{\theta}(J_0)}
    (\phi_{0,\vee(u)}(J_0)).\] 
    Since for every $\bf{u} \in \SFib$ the sequence 
    $\{ \phi_{s,\bf{u}|_{[0,k]}} \}_{k \geq 0}$ converges uniformly on 
    $D_{\theta}(J_0)$ to a constant function, it follows that
    $d_n \rightarrow 0$ as 
    $n \rightarrow \infty$. Let $\e > 0$. Choose $n$ large enough
    so that $d_n < \e$. Taking $\delta = 2^{-n}$, we have that
    any two sequences $\bf{u}, \bf{z} \in \SFib$
    with $\dist(\bf{u},\bf{z}) < 2^{-n}$ belong to a cylinder
    $[a]$ with $a \in \cL_{\text{Fib}}(n)$. Then 
    \[ \dist_{D_{\theta}(J_0)}(\phi_{0,\bf{u}}, \phi_{s_0,\bf{z}}) < 
    d_n < \e. \]
    Thus, $\psi$ is continuous.

    Since $\SFib$ and $\Lambda(P_0) \cap I$ are compact spaces, and
    $\psi$ is a continuous bijection, we have that $\psi^{-1}$ is also 
    continuous.

    Finally, observe that by \eqref{eq:finite_itineraries_using_inverse_branches}, it follows that 
    for any $\bf{u} \in \SFib$ we have 
     $\psi(\sigma (\bf{u})) = P_0(\psi(\bf{u}))$. So $\psi$ is a
     topological conjugacy between the action of $P_0$ on 
     $\Lambda (P_0) \cap I_0$ and $\SFib$.

    The construction above carries without changes to $P_w$ for 
    $w \in \cU$. Since we chose $W$ small enough 
    so that for $w \in \cU$ the map $P_w$ has a periodic
    attracting
    cycle of minimal period three close enough to $\cO_{P_0}(c_{\nu}(P_w))$
    so that $\Lambda (P_w) \cap I_w$ is contained in $K_{1,w} \cup K_{2,w}$
    and $\Crit (P_w) \cap (K_{1,w} \cup K_{2,w}) = \emptyset$.

    Now we will construct the holomorphic motion of 
    $\Lambda (P_0) \cap I_{\nu-1}\cup I_{\nu}$. Consider the map
    \[ h \colon W \times (\Lambda (P_0) \cap I_{\nu-1}\cup I_{\nu}) \longrightarrow \CC, \] 
    defined 
    as $h(w,x) \= \phi_{w,\vee(\cI(x))}$ for every 
    $(w,x) \in W \times (\Lambda (P_0) \cap I_{\nu-1}\cup I_{\nu})$.
    We have to prove that
    \begin{enumerate}
        \item $h(\vec{v}_0,x) = x$ for every $x \in \Lambda(P_0) \cap I_{\nu-1}\cup I_{\nu}$,
        \item $x \mapsto h(w,x)$ is injective for every $w \in W$, and 
        \item $w \mapsto h(w,x)$ is holomorphic for any $x \in \Lambda (P_0)\cap I_{\nu-1}\cup I_{\nu}$.
    \end{enumerate}

    For (1), observe that by \eqref{eq:identyti_of_the_itinerary}, for every 
    $x \in \Lambda (P_0) \cap I_{\nu-1}\cup I_{\nu}$, we have 
    $h(\vec{v}_0,x) = \phi_{\vec{v}_0,\cI(x)} = x.$

    To prove (2), fix $w \in W$. By definition, for every 
    $x \in \Lambda (P_0) \cap I_{\nu-1}\cup I_{\nu}$ we have 
    $\cI(h(w,x)) = \cI(\phi_{w,\cI(x)}) = \cI(x)$. Since for 
    $x, y \in \Lambda (P_0) \cap I_{\nu-1}\cup I_{\nu}$, with $x \neq y$, we have 
    $\cI(x) \neq \cI(y)$, the above implies that $h(w,x) \neq h(w,y)$.

    For (3), fix $x \in \Lambda (P_0) \cap I_{\nu-1}\cup I_{\nu}$. By Montel's Theorem, the
    family \[ \cF \= \{ \phi_{w, \vee(u)} \colon u \in \cL_{\text{Fib}} \, 
    \text{ and } w \in W \} \]
    is normal, and each $\phi_{w, \vee(u)}$ is holomorphic in $w$, we have 
    that every limit point of $\cF$ depends holomorphically on $w$. In 
    particular, for fixed $x \in \Lambda (P_0) \cap I_{\nu-1}\cup I_{\nu}$, the map
    $\phi_{w, \cI(x)}$ is holomorphic in $w$.

    Thus, $h$ is a holomorphic motion of $\Lambda (P_0)$ parametrized by 
    $W$. 

    Finally, equation \ref{eq:real_holomorphic_motion} follows from the 
    definition of the holomorphic motion.
    This conclude the proof of the proposition.
    
\end{proof}


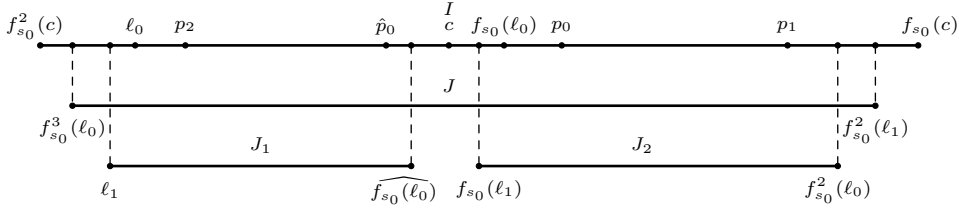
\begin{figure}[h!]
  \centering
    \begin{tikzpicture}[line cap=round,line join=round,>=triangle 45,x=3.3cm,y=4cm]
    \clip(-1.7606566481216794,-.5290389905687337) rectangle (2.06076906654733,0.3050905534000426);
\draw [line width=1pt] (-1.6283250703999999,0)-- (1.87052,0);
\draw [line width=1pt] (-1.5,-0.2)-- (1.7,-0.2);
\draw [line width=1pt] (-1.35,-0.4)-- (-0.15,-0.4);
\draw [line width=1pt] (0.12,-0.4)-- (1.55,-0.4);
\draw [line width=0.5pt, dashed] (-1.5,0)-- (-1.5,-0.2);
\draw [line width=0.5pt, dashed] (1.7,0)-- (1.7,-0.2);
\draw [line width=0.5pt, dashed] (-1.35,0)-- (-1.35,-0.4);
\draw [line width=0.5pt, dashed] (-0.15,0)-- (-0.15,-0.4);
\draw [line width=0.5pt, dashed] (0.12,0)-- (0.12,-0.4);
\draw [line width=0.5pt, dashed] (1.55,0)-- (1.55,-0.4);

\begin{scriptsize}
\draw [fill=black] (-1.6283250703999999,0) circle (1pt);
\draw[color=black] (-1.6505894678493178,0.06325904625025687) node {$f^2_{s_0}(c)$};

\draw [fill=black] (-0.25,0) circle (1pt);
\draw[color=black] (-0.25,0.06325904625025687) node {$\hat{p}_0$};

\draw [fill=black] (1.87052,0) circle (1pt);
\draw[color=black] (1.917382237811931,0.06325904625025687) node {$f_{s_0}(c)$};

\draw [fill=black] (-1.05,0) circle (1pt);
\draw[color=black] (-1.05,0.06325904625025687) node {$p_2$};

\draw [fill=black] (-1.25,0) circle (1pt);
\draw[color=black] (-1.25,0.06325904625025687) node {$\ell_0$};

\draw [fill=black] (-1.35,0) circle (1pt);
\draw [fill=black] (-1.35,-0.4) circle (1pt);
\draw[color=black] (-1.35,-0.48) node {$\ell_1$};
\draw[color=black] (-0.75,-0.33) node {$J_1$};

\draw [fill=black] (-1.5,0) circle (1pt);
\draw [fill=black] (-1.5,-0.2) circle (1pt);
\draw[color=black] (-1.5,-0.28) node {$f^3_{s_0}(\ell_0)$};

\draw [fill=black] (1.7,0) circle (1pt);
\draw [fill=black] (1.7,-0.2) circle (1pt);
\draw[color=black] (1.7,-0.28) node {$f^2_{s_0}(\ell_1)$};

\draw [fill=black] (-0.15,0) circle (1pt);
\draw [fill=black] (-0.15,-0.4) circle (1pt);
\draw[color=black] (-0.18,-0.48) node {$\widehat{f_{s_0}(\ell_0)}$};

\draw [fill=black] (0.45,0) circle (1pt);
\draw[color=black] (0.45,0.06325904625025687) node {$p_0$};

\draw [fill=black] (0.22,0) circle (1pt);
\draw[color=black] (0.22,0.06325904625025687) node {$f_{s_0}(\ell_0)$};

\draw [fill=black] (0.12,0) circle (1pt);
\draw [fill=black] (0.12,-0.4) circle (1pt);
\draw[color=black] (0.16,-0.48) node {$f_{s_0}(\ell_1)$};
\draw[color=black] (0.77,-0.33) node {$J_2$};

\draw [fill=black] (1.35,0) circle (1pt);
\draw[color=black] (1.35,0.06325904625025687) node {$p_1$};

\draw [fill=black] (1.55,0) circle (1pt);
\draw [fill=black] (1.55,-0.4) circle (1pt);
\draw[color=black] (1.55,-0.48) node {$f^2_{s_0}(\ell_0)$};

\draw[color=black] (0,0.06325904625025687) node {$c$};
\draw[color=black] (0,0.12325904625025687) node {$I$};
\draw[color=black] (0,-0.13325904625025687) node {$J$};
\draw [fill=black] (0,0) circle (1pt);
\end{scriptsize}
\end{tikzpicture}
   \caption{Diagram of the intervals 
   $I = [f_{s_0}^2(c), f_{s_0}(c)]$ on top, 
   $J = [f_{s_0}^3(\ell_0), f_{s_0}^2(\ell_1)]$ on the second level,
   $J_1 = [\ell_1, \widehat{f_{s_0}(\ell_o)}]$, and 
   $J_2 = [f_{s_0}(\ell_1), f_{s_0}^2(\ell_0)]$ on the third level.}
  \label{fig:intervals_Js}
\end{figure}

\begin{rema}
    Observe that in the proof of Proposition 
    \ref{prop:conjugacy_with_Fib_SFT} we did not use the fact that the 
    map $P_0$ is post-critically finite. Indeed, the result holds true for 
    any virtually unimodal polynomial whose dominant turning point is in 
    the basin of an attracting cycle.
\end{rema}

\subsection{Proof of the Main Theorem} \label{subsect:proof_of_the_main_theorem}
Let $P_0$ be as before. By construction, the critical values 
$v_{1,0}, v_{2,0}, \ldots, v_{\nu-1,0}$ are periodic.
Let $p_1, p_2, \ldots, p_{\nu -1}$ be their minimal periods respectively. 
By the construction of $P_0$ we have the following critical relations
\begin{itemize}
    \item For $1 \leq i \leq \nu -1$,
    \[ P_0^{p_i + 1}(c_{i,0}) = P_0(c_{i,0}),\] and 
    $P_0^j(c_{i,0}) \notin \Crit(P_0), $ for $1 \leq j \leq p_i-1$.
    \item For $c_{\nu,0}$ we have $P_0^3(c_{\nu,0}) = c_{\nu,0}$, and 
    $P_0(c_{\nu,0}), P_0^2(c_{\nu,0}) \notin \Crit(P_0). $
\end{itemize}
Let $W$ be as before. Each critical relation correspond to subvariety of 
$W$ of codimension one. Let 
\[\cU_{\nu} \= \{ w \in \cU \colon P_w^3(c_{\nu}(P_w)) = c_{\nu}(P_w) \}.\]
Then, $\cU_{\nu}$ is a subvariety of $\cU$ of (real) codimension one.
In the following we write $c_i(w) \= c_i(P_w)$ for every $w \in \cU_{\nu}$.

Consider map $\cR \colon \cU_{\nu} \longrightarrow \R^{\nu}$ defined by 
\[ \cR(w) = (P_w^{p_1+1}(c_1(w)) - P_w(c_1(w)), \ldots, 
P_w^{p_{\nu-1}+1}(c_{\nu -1}(w)) - P_w(c_{\nu-1}(w)), P_w^3(c_{\nu}(w))-
c_{\nu}(w)). \] For this map we 
have that 
\begin{itemize}
    \item $\cR(v_{1,0}, v_{2,0}, \ldots, v_{\nu-1,0}) = (0,0, \ldots, 0),$
    \item $\cR$ is smooth,
    \item $\cR$ has rank $\nu$ at $\vec{v}_0$, in particular is 
    non-degenerated.
\end{itemize}
For the second property see \cite[Appendix A]{Milnor_Tresser_Entropy_monotonicity_for_real_cubic}. For the third property
see \cite[Theorem 6.1]{Levin_Shen_van_Strien_Transversality_elementary_proof}, or
\cite[Theorem C.1]{Levin_Shen_van_Strien_Positive_trans_via_transfer_operator}. By 
considering a smaller $\cU_{\nu}$, we can assume that $\cR$ is a 
diffeomorphism from a neighborhood of $\ov{\cU_{\nu}}$ onto its image.

\begin{proof}[Proof of the Main Theorem]
    
    First, by Proposition \ref{prop:conjugacy_with_Fib_SFT} we have  
    that for every $w \in \cU_{\nu}$, the map $P_w$ has a
    repeller $\Lambda_w$ contained in the laps $I_{\nu-1} \cup I_{\nu}$, and 
    $\Lambda_w$ is homeomorphic to $\Lambda_0$.

    Let $h$ be the holomorphic motion of $\Lambda (P_0)$ given in Proposition
    \ref{prop:conjugacy_with_Fib_SFT}. Fix $\nu -1$ nonperiodic points
    $\xi_1, \xi_2, \ldots, \xi_{\nu-1}$ in $\Lambda(P_0)$, each of them
    close enough to $v_{1,0}, v_{2,0}, \ldots, v_{\nu-1,0}$ 
    respectively, and define the map
    $\widetilde{\cR} \colon \ov{\cU_{\nu}} \longrightarrow \R^{\nu}$, 
    as 
    \[ \widetilde{\cR}(w) = (h(w,\xi_1) - P_w(c_1(P_w)), \ldots, 
    h(w,\xi_{\nu-1}) - P_w(c_{\nu-1}(P_w)), P_w^3(c_{\nu}(w))-c_{\nu}(w)). \]
    Taking $\xi_i$ close enough to $v_{i,0}$ for every $1 \leq i \leq \nu-1$
    and $w$ close enough to $\vec{v}_0$ we can take 
    $\widetilde{\cR}$
    close to $\cR$ in the $C^1$ topology, so that  $\widetilde{\cR}$ 
    is non-degenerated, is a diffeomoprhims from $\cU_{\nu}$ into its
    image, and
    $(0,\ldots,0) \in \widetilde{\cR}_{\lambda}(U_{\nu})$. 
    In particular, there exits a $\hat{w}_0 \in \cU_{\nu}$ so that 
    for every $1 \leq i \leq \nu$ we have $P_{\hat{w}_0}(c_i(\hat{w}_0)) = h(\hat{w}_0, \xi_i)$. Since the itinerary of $h(\hat{w}_0, \xi_i)$, 
    is the same as the itinerary of $\xi_i$ for every
    $1 \leq i \leq \nu -1$, we have that the orbit
    of $c_1(P_{\hat{w}_0}), \ldots, c_{\nu-1}(P_{\hat{w}_0})$ are not
    finite, and they are not in the basin of an attracting periodic point.
    In particular 
    $\cI(c_1(\hat{w}_0)), \ldots, \cI(c_{\nu-1}(\hat{w}_0))$ are not
    eventually periodic.
    Since the points $\xi_1, \xi_2, \ldots, \xi_{\nu-1}$ can be choose in
    in any way (we just need them to be close enough to the points
    $v_{1,0}, v_{2,0}, \ldots, v_{\nu-1,o}$) this prove that the critical
    points undergoes independent bifurcations.

    We can see that the construction carry over if we start at any
    virtually unimodal posct-critically finite polynomial $P_0$ by changing
    the periodic points $v_{1,0}, \ldots v_{\nu,0}$ and also changing 
    the points $x_1, \ldots, x_{\nu}$. Thus, we have infinitely may maps
    in $\cP_d^{\boldsymbol{\mu}}(\R)$ with the same Artin-Mazur zeta 
    function, they are neither post-critically finite, nor hyperbolic, and they are not topologically conjugated.

    Finally, we compute the Artin-Mazur zeta function of $P_w$ for 
    $w \in \cU_{\nu}$ (indeed, the Artin-Mazur zeta function will be the 
    same if we consider $w \in \cU$).
    Since the Artin-Mazur zeta function is invariant under topological 
    conjugacy, by Lemma \ref{prop:conjugacy_with_Fib_SFT}, we have that to 
    compute $\zeta_{F_s}(t)$ we can compute the Artin-Mazur zeta function 
    of the left shift restricted to $\SFib$, and add the contribution of 
    the periodic point in $\cK_{\R}(P_w) \setminus \Lambda_s$. The 
    only periodic points in $\cK_{\R}(P_w) \setminus \Lambda_s$ are the 
    ones in $\partial \cK_{\R}(P_w)$ that form a cycle of period two if 
    $\nu$ is even or it has a single fixed point if $\nu$ is odd, and the
    attracting cycle of period three given by $\cO_{P_w}(c_{\nu}(P_w))$. 

    \begin{equation*}
        \zeta_{P_w}(t) =
        \begin{cases}
            \zeta_{\sigma}(t) \exp \left( \sum_{n \geq 1} \frac{1}{n}t^{2n} \right) \exp \left( \sum_{n \geq 1} \frac{1}{n}t^{3n} \right) & \text{ if } \nu \text{ is even, }\\
            \zeta_{\sigma}(t) \exp \left( \sum_{n \geq 1} \frac{1}{n}t^{n} \right) \exp \left( \sum_{n \geq 1} \frac{1}{n}t^{3n} \right)
            & \text{ if } \nu \text{ is odd.}
        \end{cases}
    \end{equation*}
    Using Lemma \ref{lemm:A-M_zeta_function_of_the_SFT}, we obtain that 
    \[ \zeta_{P_w}(t) = \frac{1}{\Phi_{\nu}(t)(1-t^3)(1-t-t^2)}, \]
    where 
    \begin{equation*}
        \Phi_{\nu}(t) = 
        \begin{cases}
            1-t^2 & \text{ if } \nu \text{ is even, }\\
            1-t & \text{ if } \nu \text{ is odd.}
        \end{cases}
    \end{equation*}
    So, once $\nu \geq$ is fixed, every element in the family $\cU_{\nu}$
    have the same Artin-Mazur zeta function.
    
    This conclude the proof of the Main Theorem.
\end{proof}

\subsection{Proof of Theorem \ref{theo:combinatorial_type_theorem}}
\label{subsect:proof_of_combinatorial_theorem}.
We conclude this section with the proof of Theorem \ref{theo:combinatorial_type_theorem}.
This theorem is consequence of our construction of virtually 
unimodal maps and the unfolding of the critical relations used in the 
proof of the Main Theorem.

\begin{proof}{Proof of Theorem \ref{theo:combinatorial_type_theorem}}
    For part $(i)$ observe that in the construction of the postcritically
    finite virtually unimodal map, we can change the period of the dominant 
    point, and after perturbation we can choose that all the others 
    critical values are in the basin of attraction the dominant critical 
    point. This will give us infinitely many different combinatorial types.

    For part $(ii)$, is enough to choose different periodic points
    $k_{1,0}, k_{2,0}, \ldots, k_{\nu,0}$ in the construction of the 
    virtually unimodal combinatorics. We can also change the period of the
    dominant turning point. Again, this gives rise to infinitely many 
    different combinatorial type.

    For part $(iii)$, is enough to choose different periodic points
    $k_{1,0}, k_{2,0}, \ldots, k_{\nu,0}$ in the construction of the 
    virtually unimodal combinatorics. We can also change the period of the
    dominant turning point. After that, we can perturbe our map as in the 
    proof of the Main Theorem, changing the points 
    $\xi_1,\xi_2, \ldots, \xi_3$. Again, this gives rise to infinitely many 
    different combinatorial type.
\end{proof}

\section{The bimodal case}
\label{section_the_bimodal_case}

We introduce a one-parameter family of cubic polynomials for which every 
element has a rational A-M zeta function. Moreover, infinitely many elements
of this family have a turning point that is not asymptotic to a periodic
cycle; thus, they are not hyperbolic nor PCF.

For $s \in [1 , \infty )$, put
\[ a(s) \= -1 + \frac{1}{s^2(s+1)} \hspace{0.5cm} \text{ and }
\hspace{0.5 cm} b(s) \= -s - \frac{1}{s^2(s+1)}. \]
Consider the real polynomial 
\[F_s(x) = a(s)x^3 + b(s)x^2 +1. \]  

\begin{prop}
\label{prop:cubic_polynomial_family}
There exists $s_* >1$ so that for every $s \in [1,s_*]$ the polynomial $F_s$ 
is cubic with real and distinct critical points, and we have
\[ \zeta_{F_s}(t) = \frac{1}{(1-t^3)(1-t^2)(1-t-t^2)}. \]
Moreover, there are uncountably many $s \in [1,s_*]$ for which $F_s$ is 
neither PCF nor hyperbolic.
\end{prop}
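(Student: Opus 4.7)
The plan is to realize $\{F_s\}_{s \in [1, s_*]}$ as an explicit one-parameter curve inside the codimension-one subvariety $\cU_{\nu}$ from Section \ref{subsect:proof_of_the_main_theorem} (with $d = 3$, $\nu = 2$, $\boldsymbol{\mu} = (1,1)$), based at the post-critically finite virtually unimodal polynomial $F_1$, and then invoke the Main Theorem. A direct computation gives $F_s'(x) = x(3 a(s) x + 2 b(s))$, so $F_s$ has two real critical points $c_2(s) = 0$ and $c_1(s) = -2 b(s)/(3 a(s))$, which are distinct and with $a(s) < 0$ for $s$ close enough to $1$, placing $F_s \in \cP_3^{(1,1)}(\R)$. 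Substituting the formulas for $a(s)$ and $b(s)$ directly verifies
\[
F_s(0) = 1, \qquad F_s(1) = a(s) + b(s) + 1 = -s, \qquad F_s(-s) = -a(s)s^3 + b(s)s^2 + 1 = 0,
\]
so $\{0, 1, -s\}$ is a super-attracting period-three cycle of $F_s$ containing $c_2(s)$, for every $s$. The entire curve $\{F_s\}$ therefore lies in the codimension-one subvariety $\cU_{\nu}$ of $\cP_3^{(1,1)}(\R)$ where the dominant critical point has period three.

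Next I would verify the virtually unimodal conditions (Definition \ref{def:VUM}) at $s = 1$ with dominant turning point $c_2 = 0$: the interval $\langle F_1^2(0), F_1(0) \rangle = [-1, 1]$ contains only the turning point $c_2$ in its interior (since $c_1(1) = -2$), contains the orbit $\{-1, 0, 1\}$, and $F_1(c_1) = -1 \in [-1, 1]$. Since $F_1$ is also post-critically finite and (VU1)--(VU3) are stable under small perturbations, shrinking $s_*$ keeps $F_s$ virtually unimodal along $[1, s_*]$. The Main Theorem then applies: by Proposition \ref{prop:conjugacy_with_Fib_SFT}, the repeller of $F_s$ inside $\langle F_s^2(0), F_s(0)\rangle$ is topologically conjugate to $(\SFib, \sigma)$, and the remaining periodic orbits of $F_s$ in $K_{\R}(F_s)$ are the super-attracting 3-cycle $\{0, 1, -s\}$ and the boundary 2-cycle of $\partial K_{\R}(F_s)$ (which is a 2-cycle since $\nu = 2$ is even). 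Summing the Artin--Mazur contributions and invoking Lemma \ref{lemm:A-M_zeta_function_of_the_SFT} as in the final step of the proof of the Main Theorem yields
\[
\zeta_{F_s}(t) = \frac{1}{(1-t^2)(1-t^3)(1-t-t^2)}
\]
for every $s \in [1, s_*]$, which is the stated formula.

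For the uncountability claim, the non-degeneracy of the map $\widetilde{\cR}$ in the proof of the Main Theorem shows that the assignment $s \mapsto \cI(c_1(s))$ sending $s$ to the itinerary of $c_1(s)$ in $(\SFib, \sigma)$ via the conjugacy of Proposition \ref{prop:conjugacy_with_Fib_SFT} is continuous and, after possibly shrinking $s_*$ once more, restricts to a homeomorphism from the closed non-hyperbolic parameter set onto its image inside $\SFib$. Since this image is a Cantor-like set and the set of eventually periodic sequences in $\SFib$ is only countable (corresponding precisely to PCF parameters), the complement in $[1,s_*]$ of the union of PCF and hyperbolic parameters remains uncountable. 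The main obstacle I anticipate is checking that the explicit curve $\{F_s\}$ meets the transversality hypothesis of the Main Theorem at $s=1$, that is, the nonvanishing of $\tfrac{d}{ds}\big|_{s=1} F_s(c_1(s))$; this is a direct but unavoidable derivative computation using the explicit formulas for $a(s), b(s), c_1(s)$, needed to rule out the pathology where $\{F_s\}$ is tangent to the preperiodic locus of $c_1$ and thus realizes only countably many combinatorial types.
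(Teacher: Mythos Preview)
Your computation of the zeta function is essentially the paper's argument: verify that each $F_s$ is virtually unimodal with dominant turning point $0$ of period three, invoke Proposition~\ref{prop:conjugacy_with_Fib_SFT} for the repeller, and add the boundary $2$-cycle and the attracting $3$-cycle. That part is fine.

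The uncountability argument, however, has a genuine gap. At $s=1$ you have $F_1(c_1)=-1=F_1^2(0)$, which lies on the super-attracting $3$-cycle, hence in the \emph{basin}, not in the repeller $\Lambda(F_1)$. The repeller in $I_1$ only begins at the repelling period-three point $p_2>F_1^2(0)$; the whole interval $[F_s^2(0),p_2(s))$ is immediate basin. Consequently, for all $s$ in a right-neighborhood of $1$ the value $F_s(c_1(s))$ remains in the basin and $F_s$ stays hyperbolic. Your strategy of ``shrinking $s_*$'' therefore moves in the wrong direction: a sufficiently small $s_*>1$ gives an interval $[1,s_*]$ consisting \emph{entirely} of hyperbolic maps, and the derivative check $\tfrac{d}{ds}\big|_{s=1}F_s(c_1(s))\neq 0$ cannot repair this. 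Relatedly, you cannot invoke the Main Theorem's map $\widetilde{\cR}$ at the base point $F_1$: that machinery is set up around a $P_0$ whose non-dominant critical values lie on \emph{repelling} periodic orbits inside $\Lambda$, whereas $F_1$'s does not.

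What the paper does instead is a global computation (Lemma~\ref{lemma:geometry_of_cubi_maps_1}(5)): it defines $s_*$ as the specific parameter with $F_{s_*}(c_{s_*})=0$ and shows $s\mapsto F_s(c_s)$ is strictly increasing from $-1$ to $0$ on $[1,s_*]$. Thus the critical value sweeps across the entire interval $[F_s^2(0),0]$ and in particular across the whole Cantor repeller sitting there; combined with the holomorphic motion this forces every itinerary in the relevant piece of $\SFib$ to be realized by some $F_s$. The essential missing ingredient in your proposal is this global monotonicity, which replaces the local transversality you attempt and guarantees the curve actually enters (and traverses) the repeller.
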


We will split the proof of this proposition into several lemmas. First, we 
will study the structure of the polynomials $F_s$. Then, we will compute
the kneading determinant. Finally, we prove the existence of 
parameters $s$ for which $F_s$ is neither PCF nor hyperbolic.

\begin{lemm}
\label{lemma:geometry_of_cubi_maps_1}
For every $s \in [1,+\infty )$ the following holds:
\begin{enumerate}
    \item $a(s) < 0$ and $b(s) < 0$;
    \item $F_s$ is a cubic polynomial with real critical points $0$ and 
    \[ c_s \= - \frac{2}{3}\frac{b(s)}{a(s)},\] and shape $(-,+,-)$. 
    Furthermore, $c_s \neq 0$, so the critical points of $F_s$ are distinct;
    \item The critical point $0$ is periodic of period three with orbit
    \[ \cO_{F_s}(0) = \{0, F_s(0)=1, F_s^2(0) = -s \}.  \]
\end{enumerate}
Moreover, there is a unique $s_* > 1$ such that $F_{s_*}(c_{s_*}) = 0$, and 
for every $s \in [1,s_*]$ we have:
\begin{enumerate}
    \item[(4)] $c_s \leq -s$ with equality only for $s = s_*$;
    \item[(5)] $s \mapsto F_s(c_s)$ is increasing with 
    $-1 \leq F_s(c_s) \leq 0$, $F_1(c_1) = -1$, and $F_{s_*}(c_{s_*}) = 0$.
    In particular, $F_1$ and $F_{s_*}$ are postcritically finite.
\end{enumerate}

\end{lemm}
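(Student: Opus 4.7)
The plan is to dispatch parts (1)--(3) by direct computation, then derive a single polynomial $P(s):=s^4+s^3-3s-2$ whose sign on $[1,\infty)$ simultaneously controls the position of $c_s$ relative to $-s$ and the sign of $F_s(c_s)$; elementary analysis of $P$ then gives (4) and (5).

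For (1), $s^2(s+1)\geq 2$ on $[1,\infty)$ forces $1/(s^2(s+1))<1$, hence $a(s)<0$, while $b(s)<0$ is immediate. For (2), $F_s'(x)=x(3a(s)x+2b(s))$ produces the two critical points; $c_s\neq 0$ because $b(s)\neq 0$, and the shape $(-,+,-)$ follows from $a(s)<0$ together with $c_s<0$ (as $a(s)$ and $b(s)$ share a sign). For (3), direct substitution gives $F_s(0)=1$, $F_s(1)=a(s)+b(s)+1=-s$, and $F_s(-s)=-a(s)s^3+b(s)s^2+1$, which telescopes via $s^3/(s^2(s+1))=s/(s+1)$ to $-\tfrac{s}{s+1}-\tfrac{1}{s+1}+1=0$.

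For parts (4)--(5), the unifying identity is
\[
  c_s+s \;=\; \frac{P(s)}{3(s^3+s^2-1)},\qquad P(s):=s^4+s^3-3s-2,
\]
obtained by simplifying $-\tfrac{2b(s)}{3a(s)}+s$. Since $P'(s)=4s^3+3s^2-3\geq 4$ on $[1,\infty)$ and $P(1)=-3$, $P(2)=16$, the polynomial $P$ admits a unique zero $s_*\in(1,2)$, and $P\leq 0$ on $[1,s_*]$ with equality only at $s_*$; this proves (4). For (5) I apply the envelope identity $h'(s)=c_s^2\bigl(a'(s)c_s+b'(s)\bigr)$ to $h(s):=F_s(c_s)$, valid because $F_s'(c_s)=0$, and use the relation $b'(s)=-1-a'(s)$ (from $a(s)+b(s)=-s-1$) together with the explicit form $a'(s)=-(3s+2)/(s^3(s+1)^2)$. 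Substituting $c_s=-s+P(s)/(3(s^3+s^2-1))$ into $a'(s)c_s+b'(s)$ and combining over the common denominator $3s^3(s+1)^2(s^3+s^2-1)$ extracts $P(s)$ as a factor, yielding
\[
  h'(s) \;=\; -\frac{c_s^2\,P(s)\,Q(s)}{3\,s^3\,(s+1)^2\,(s^3+s^2-1)},\qquad Q(s):=3s^4+6s^3+3s^2-1.
\]
All factors apart from $-P(s)$ are strictly positive on $[1,\infty)$ (with $Q(1)=11$ and $Q'>0$), so $h$ is strictly increasing on $[1,s_*]$ with boundary values $h(1)=-1$ (by direct computation) and $h(s_*)=0$ (since $c_{s_*}=-s_*$ is already a root of $F_{s_*}$). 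The PCF claim at the endpoints is then immediate: $-1\in\cO_{F_1}(0)=\{0,1,-1\}$ and $0\in\cO_{F_{s_*}}(0)$. The main computational hurdle is the last factorization: while the preliminary identity $-a'(s)s+b'(s)=-P(s)/(s^3(s+1))$ isolates $P(s)$ from the first piece after writing $c_s=-s+\cdots$, the remaining cancellation is not manifest and relies on the specific common denominator to force the concise factor $Q(s)$ to appear.
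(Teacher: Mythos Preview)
Your treatment of (1)--(3) matches the paper's: direct computation, the same identities, the same verification that $F_s(1)=-s$ and $F_s(-s)=0$.

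For the second half your route is genuinely different. The paper attacks $F_s(c_s)$ head-on, obtaining the factorization
\[
F_s(c_s)=\frac{4b(s)^3}{27a(s)^2}+1=\frac{-(s^4+s^3-3s-2)^2(4s^4+4s^3-3s+1)}{27s^2(s+1)(s^3+s^2-1)^2},
\]
which immediately gives both the sign $F_s(c_s)\le 0$ and the uniqueness of $s_*$ (as the unique positive root of $p(s)=s^4+s^3-3s-2$). For monotonicity the paper then differentiates $4b^3/(27a^2)$ and argues from the signs of $a'$ and $b'$. You instead reach the same polynomial $P=p$ through the identity $c_s+s=P(s)/\bigl(3(s^3+s^2-1)\bigr)$, which gives (4) directly, and for (5) you use the envelope relation $h'(s)=c_s^2\bigl(a'(s)c_s+b'(s)\bigr)$ and factor to obtain $h'(s)=-c_s^2 P(s)Q(s)\big/\bigl(3s^3(s+1)^2(s^3+s^2-1)\bigr)$ with $Q(s)=3s^4+6s^3+3s^2-1$. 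Both computations are correct; your intermediate identity $-a'(s)s+b'(s)=-P(s)/(s^3(s+1))$ is the key step and it checks out. What your approach buys is a clean sign argument for monotonicity: the paper's claim that $b'(s)>0$ on $[1,s_*]$ is actually false (e.g.\ $b'(1.2)<0$), so its sign analysis of $3ab'-2a'b$ is incomplete as written, whereas your factorization makes the positivity of $h'$ on $[1,s_*)$ transparent. What the paper's factorization of $F_s(c_s)$ buys is the global statement $F_s(c_s)\le 0$ on all of $[1,\infty)$ and hence uniqueness of $s_*$ in one stroke.

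One small point you should add: as stated, you define $s_*$ as the root of $P$ and then verify $h(s_*)=0$, but you do not explicitly argue that no other $s>1$ satisfies $F_s(c_s)=0$. This follows immediately from your own formula---for $s>s_*$ one has $P(s)>0$, hence $h'(s)<0$, so $s_*$ is the strict global maximum of $h$ on $[1,\infty)$ with value $0$---but you should say so to match the lemma's phrasing ``there is a unique $s_*>1$''.
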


\begin{proof}
Let $s \geq 1$. By definition $b(s) < 0$. Also, since $s^2(s+1) >1$, we have 
$a(s) <0$. This proves $(1)$.

For $(2)$, write 
\[ a(s) = - \frac{s^3+s^2-1}{s^2(s+1)}.\] By Descartes' rule of signs, 
$s^3 + s^2 - 1$ has a unique real positive root, then $a(s)$ has a unique 
positive real root. Since $a(1/2) = 5/3 > 0 > -1/2 = a(1)$, the unique 
positive real root of $a(s)$ is in $(0,1)$. So, for $s \geq 1$, the polynomial
$F_s$ is cubic. Taking derivative 
\begin{equation}
    \label{eq:derivative_of_F_s}
    F'_s(x) = 3a(s)x^2 + 2b(s)x = 3a(s)x \left(x + \frac{2}{3} 
    \frac{b(s)}{a(s)}\right).
\end{equation}
From the above, we 
see that the critical points of $F_s$ are $0$, and 
\[c_s = - \frac{2}{3} \frac{b(s)}{a(s)}. \] From $(1)$, we have that 
$c_s <0$. Then, from \eqref{eq:derivative_of_F_s}, we see that $F_s$ is 
decreasing for $x \in (-\infty,c_s) \cup (0, +\infty)$, and is increasing 
for $x \in (c_s,0)$. Thus, the shape of $F_s$ is $(-,+,-)$.

For point $(3)$, we need to compute the image of $0$, and $F_s(0)$. 
It is direct that $F_s(0) = 1$. Also,
\[ F_s(1) = a(s) + b(s) +1 = -s,\] and 
\[F_s(-s) = -a(s)s^3 + b(s)s^2 + 1 =0.\]

Now we prove the existence of $s_*$. Observe that 

\begin{align*}
    F_s(c_s) &= \frac{4b(s)^3}{27a(s)^2} + 1 \\
    &= \frac{-4(s^4+s^3+1)^3 + 27s^2(s+1)(s^3+s^2-1)^2}{27s^2(s+1)(s^3+s^2-1)^2} \\
    &= \frac{-(s^4+s^3-3s-2)^2(4s^4+4s^3-3s+1)}{27s^2(s+1)(s^3+s^2-1)^2}.
\end{align*}
Put \[p(s) = s^4+s^3-3s -2 \hspace{0.5cm} \text{ and } \hspace{0.5cm} 
q(s) = 4s^4 + 4s^3-3s+1.\]
By Descartes' rule of signs, $p(s)$ has a single positive root. Moreover,
as \[p(1) = -2 < 0 < 16 = p(2),\] the unique positive real root of $p(s)$ is
in $(1,2)$. For $q(s)$ we have \[ q'(s) = 16s^3 + 12s^2 -3. \] So $q'(s) > 0$ 
for $s \geq 1$, so $q(s)$ is increasing, and $q(1) = 6$. Them, $q(s)$ has no
real roots in $[1,+\infty )$ (indeed, it has no real roots). This prove that 
there is a unique $s_* >1$ so that $F_{s_*}(c_{s_*}) = 0.$

Let $s \in [1,s_*]$. To prove $(4)$, observe that from $(1)$, it follows that
$c_s < 0$ for $1 \leq s < s_*$.
Then, since $F_s(-s) =0$ and $F_s$ is increasing on $(c_s,0)$, we have that   
$c_s \leq -s$. For $s=s_*$, we get $F_{s_*}(c_{s_*}) = 0 = F_{s_*}(-s_*)$.
Since $F_{s_*}$ is increasing on $(c_{s_*},0)$, we must have 
$c_{s_*} = -s_*$.

Finally, to prove $(5)$, we have 
\[\frac{\partial F_s}{\partial s} (c_s) = \frac{4}{27} 
\frac{3b(s)^2b'(s)a(s)^2 - 2a(s)a'(s)b(s)^3}{a(s)^4},\] with 
\[a'(s) = -\frac{3s+2}{s^3(s+1)^2} \hspace{0.5cm} \text{ and } \hspace{0.5cm} 
b'(s) = -1 + \frac{3s+2}{s^3(s+1)^2}.\] We see that $a'(s) <0$ and 
$b'(s) >0$. This, together with point $(1)$, give us that 
\[\frac{\partial F_s}{\partial s}(c_s) >0.\] So $s \mapsto F_s(c_s)$ is 
increasing.

As $F_1(c_1) = -1 \in \cO_{F_s}(0)$ and $F_{s_*}(c_{s_*}) = 0$, by $(3)$, we 
conclude that $F_1$ and $F_{s_*}$ are PCF.

This finishes the proof of the lemma.



\end{proof}

Numerical computations give us that 
        \[ s_*= 1.371 \ldots\]
From now on, we will consider $s \in [1, s_*]$ unless otherwise stated. 

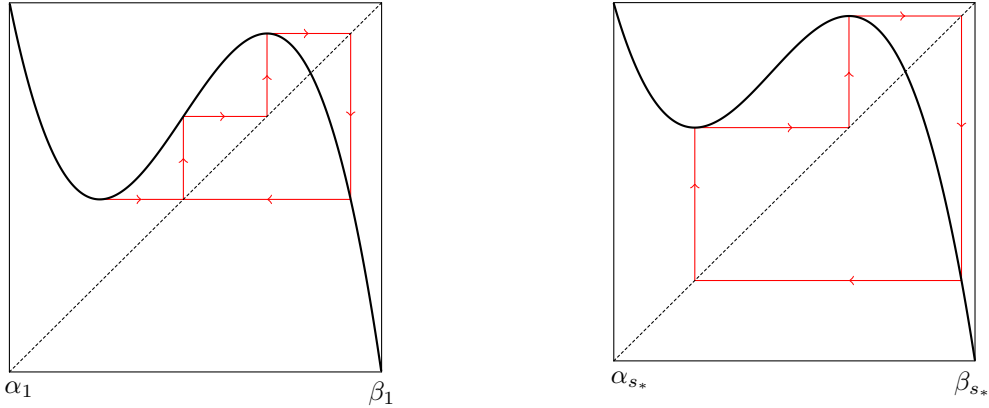
\begin{figure}[h!]
  \raggedright
  \begin{subfigure}[b]{.3 \textwidth}
  \centering
    \begin{tikzpicture}[line cap=round,line join=round,>=triangle 45,x=1.1cm,y=1.1cm]
    \clip(-3.5,-3.5) rectangle (1.5,1.5);
    \draw (-3.08,1.37)-- (1.37,1.37);
      \draw (-3.08,-3.08)-- (1.37,-3.08);
      \draw (-3.08,-3.08)-- (-3.08,1.37);
      \draw (1.37,-3.08)-- (1.37,1.37);
      \draw[-to, color=red] (0,1) -- (0.5,1);
      \draw[ color=red] (0.5,1) -- (1,1);
      \draw[-to, color=red] (1,1) -- (1,0);
      \draw[ color=red] (1,0) -- (1,-1);
      \draw[-to, color=red] (1,-1) -- (0,-1);
      \draw[ color=red] (0,-1) -- (-1,-1);
      \draw[-to, color=red] (-2,-1) -- (-1.5,-1);
      \draw[ color=red] (-1.5,-1) -- (-1,-1);
      \draw[-to, color=red] (-1,-1) -- (-1,-0.5);
      \draw[ color=red] (-1,-0.5) -- (-1,0);
      \draw[-to, color=red] (-1,0) -- (-0.5,0);
      \draw[ color=red] (-0.5,0) -- (0,0);
      \draw[-to, color=red] (0,0) -- (0,0.5);
      \draw[ color=red] (0,0.5) -- (0,1);
      \draw [dash pattern=on 1pt off 1pt] (-3.08,-3.08)-- (1.37,1.37);
      \draw (-3.25,-3.08) node[anchor=north west] {$\alpha_1$};
      \draw (1.1,-3.08) node[anchor=north west] {$\beta_1$};
      \draw[line width=0.8,smooth,samples=100,domain=-3.075:1.365] plot(\x,{-0.5*(\x)^3-1.5*(\x)^2+1});
    \end{tikzpicture}
   \end{subfigure}
   \hspace{3cm}
   \begin{subfigure}[b]{.3 \textwidth}
   \centering
   \begin{tikzpicture}[line cap=round,line join=round,>=triangle 45,x=1.48cm,y=1.48cm]
    \clip(-2.5,-2.5) rectangle (1.25,1.25);
    \draw (-2.09,1.12)-- (1.12,1.12);
      \draw (-2.09,-2.09)-- (1.12,-2.09);
      \draw (-2.09,-2.09)-- (-2.09,1.12);
      \draw (1.12,-2.09)-- (1.12,1.12);
      \draw[-to, color=red] (0,1) -- (0.5,1);
      \draw[ color=red] (0.5,1) -- (1,1);
      \draw[-to, color=red] (1,1) -- (1,0);
      \draw[ color=red] (1,0) -- (1,-1.37);
      \draw[-to, color=red] (1,-1.37) -- (0,-1.37);
      \draw[ color=red] (0,-1.37) -- (-1.37,-1.37);
      \draw[-to, color=red] (-1.37,-1.37) -- (-1.37,-0.5);
      \draw[ color=red] (-1.37,-0.5) -- (-1.37,0);
      \draw[-to, color=red] (-1.37,0) -- (-0.5,0);
      \draw[ color=red] (-0.5,0) -- (0,0);
      \draw[-to, color=red] (0,0) -- (0,0.5);
      \draw[ color=red] (0,0.5) -- (0,1);
      \draw [dash pattern=on 1pt off 1pt] (-2.09,-2.09)-- (1.12,1.12);
      \draw (-2.2,-2.1) node[anchor=north west] {$\alpha_{s_*}$};
      \draw (0.85,-2.1) node[anchor=north west] {$\beta_{s_*}$};
      \draw[line width=0.8,smooth,samples=100,domain=-2.09:1.12] plot(\x,{-0.775615*(\x)^3-1.59538*(\x)^2+1});
    \end{tikzpicture}
   \end{subfigure}
  \label{fig:3}
  \caption{Graph of the map $F_s$ restricted to $\cK_{F_s}$ for $s = 1$
  (left) and $s=s_*$ (right).}
\end{figure}

 \begin{proof}[Proof of Proposition \ref{prop:cubic_polynomial_family}]

The existence of $s_* >1$ and the fact that for every $s \in [1, s_*]$ 
the map $F_s$ is a cubic polynomial with distinct critical points is given
by Lemma \ref{lemma:geometry_of_cubi_maps_1}.

By points $(3)$ and $(5)$ in Lemma \ref{lemma:geometry_of_cubi_maps_1}, we
have that both critical orbits are bounded, so the filled Julia set 
$\cK_{\R}(F_s)$ is connected and since the shape of $F_s$ is $(-,+,-)$, the
boundary forms a cycle of 
period two. Also, by point $(5)$ in Lemma 
\ref{lemma:geometry_of_cubi_maps_1}, $F_s$ is a 
virtually unimodal with dominant turning point $c = 0$ for every 
$s \in [1,s_*]$. Then, the Artin-Mazur zeta function i given by 

\[ \zeta_{F_s}(t) = \frac{1}{(1-t^3)(1-t^2)(1-t-t^2)}. \]

The fact that there are uncountable many $s \in [1,s_*]$ for which 
$F_s$ is neither PCF nor hyperbolic is a consequence of 
Proposition \ref{prop:conjugacy_with_Fib_SFT}. Since for every 
$s \in [1,s_*]$ the map $F_s$ is virtually unimodal with dominan turning 
point periodic of period three, implies that 
$\Lambda (F_s) \cap I_1\cup I_2$ is a cantor set and the action of $F_s$ 
restricted to this set is conjugated to the action of the shift map on 
$\SFib$. Using Proposition \ref{prop:conjugacy_with_Fib_SFT},
we can define a holomorphic motion
$h_s$ of $\Lambda (F_s) \cap [F_s^2(0), F_s(0)] $ over a domain $W_s$ in 
$\C$. Since $[1,s_*]$ is compact
in $\C$ and the collection \[ \{W_s\}_{s \in [1,s_*]} \] is an open 
cover of $[1,s_*]$, so there is a finite collection $s_1,s_2, \ldots s_n$ in 
$[1,s_*]$ such that \[ [1,s_*] \subset \bigcup_{i=1}^n W_{s_i}. \] By
the Identity Principle, if for $i,j \in \{ 1, \ldots,n \}$ we have that 
$W_{s_i} \cap W_{s_j} \neq \emptyset$, then 
$h_i|_{(W_{s_i} \cap W_{s_j})} = h_j|_{(W_{s_i} \cap W_{s_j})}$. 
Fix a $s_0 \in [1,s_*]$, and let $W \= \cup_{i=1}^n W_{s_i}$. The
map $h \colon W \times \Lambda (F_{s_0}) \cap [F_s^2(0), F_s(0)] 
\longrightarrow \CC$, defined by $h(s,x) \= h_{s_i}(s,x)$ if 
$s \in W_{s_i}$.

The map $h$ is a holomorphic motion of $\Lambda(F_{s_0})$ parametrized by 
$W$. For the curve 
\begin{align*}
\Gamma_c \colon [1,s_*] &\longrightarrow \Omega \times \CC \\
s &\mapsto (s,F_s(c_s)),
\end{align*}
we have \[ -1 = \pi_2(\Gamma_c(1)) \leq x \in [F_s^2(0) , 0] \leq 
\pi_2(\Gamma_c(s_*)) = 0. \] Thus, $\Gamma_c$ is transversal to the sections
\[ \{ W \times \{ x \} \colon x \in \Lambda(F_{s_0}) \text{ and } 
(\cI(x))_0 = I_1 \}.\] From the above, we conclude that when restricted to
$[1,s_*]$, for any $\bm{w} \in \SFib$ there is a $s \in [1,s_*]$ such that 
\[ \cI(F_s(c_s)) = \bm{w} \hspace{0.5cm} \text{ or } \hspace{0.5cm} 
\cI(F^2_s(c_s)) = \bm{w}.\]
This conclude the proof of the proposition.

\end{proof}

\bibliography{bibliography}{}
\bibliographystyle{alpha}
\end{document}